\DeclareMathOperator{\re}{Re}
\DeclareMathOperator{\End}{End}
\DeclareMathOperator{\Aut}{Aut}
\DeclareMathOperator{\Hom}{Hom}
\DeclareMathOperator{\tr}{Tr}
\DeclareMathOperator{\U}{U}
\DeclareMathOperator{\GL}{GL}
\DeclareMathOperator{\Ric}{Ric}
\DeclareMathOperator{\Hilb}{Hilb}
\DeclareMathOperator{\FS}{FS}
\newcommand{\R}{\mathbb R}
\newcommand{\C}{\mathbb C}
\newcommand{\B}{\mathcal B}
\newcommand{\D}{\mathcal D}
\newcommand{\diff}{\mathrm{d}}
\newcommand{\del}{\partial\!}
\newcommand{\delb}{\overline{\partial}\!}
\newcommand{\omegaFS}{\omega_{\mathrm{FS}}}
\newcommand{\su}{\mathfrak{su}}
\renewcommand{\P}{\mathbb P}
\renewcommand{\H}{\mathcal H}
\renewcommand{\O}{\mathcal O}
\renewcommand{\u}{\mathfrak{u}}
\theoremstyle{plain}
	\newtheorem{theorem}{Theorem}
	\newtheorem{proposition}[theorem]{Proposition}
	\newtheorem{lemma}[theorem]{Lemma}
	\newtheorem{corollary}[theorem]{Corollary}
\theoremstyle{definition}
	\newtheorem{definition}[theorem]{Definition}
	\newtheorem{remark}[theorem]{Remark}
\theoremstyle{plain}
	\newtheorem*{theorem*}{Theorem}
	\newtheorem*{proposition*}{Proposition}
	\newtheorem*{lemma*}{Lemma}
	\newtheorem*{corollary*}{Corollary}
	\newtheorem*{conjecture*}{Conjecture}
	\newtheorem*{inductive-hypotheses}{Inductive Hypotheses}
	\newtheorem*{base-step}{Base Step}
	\newtheorem*{inductive-step}{Inductive Step}
	\newtheorem*{final-step}{Final Step}
\theoremstyle{definition}
	\newtheorem*{definition*}{Definition}
	\newtheorem*{remark*}{Remark}
	\newtheorem*{remarks*}{Remarks}
\def\blfootnote{\xdef\@thefnmark{}\@footnotetext}
\begin{document}

\title{Quantisation and the Hessian of Mabuchi energy}
\author{Joel Fine}
\date{}

\maketitle

\begin{abstract}
Let $L \to X$ be an ample bundle over a compact complex manifold. Fix a Hermitian metric in $L$ whose curvature defines a Kähler metric on $X$. The Hessian of Mabuchi energy is a fourth-order elliptic operator $\D^*\D$ on functions which arises in the study of scalar curvature. We quantise $\D^*\D$ by the Hessian $P^*_kP_k$ of balancing energy, a function appearing in the study of balanced embeddings. $P^*_kP_k$ is defined on the space of Hermitian endomorphisms of $H^0(X,L^k)$ endowed with the $L^2$-inner-product. We first prove that the leading order term in the asymptotic expansion of $P^*_kP_k$ is $\D^*\D$. We next show that if $\Aut(X,L)/\C^*$ is discrete, then the eigenvalues and eigenspaces of $P^*_kP_k$ converge to those of $\D^*\D$. We also prove convergence of the Hessians in the case of a sequence of balanced embeddings tending to a constant scalar curvature Kähler metric. As consequences of our results we prove that an estimate of Phong--Sturm in \cite{phong-sturm} is sharp and give a negative answer to a question of Donaldson from \cite{donaldson-1}. We also discuss some possible applications to the study of Calabi flow.
\end{abstract}

\tableofcontents

\section{Introduction}

\subsection{Background}
\label{motivation}

This article concerns Donaldson's deep observation that balanced projective embeddings are the quantisation of constant scalar curvature Kähler metrics \cite{donaldson-1}. To set the scene and fix notation we briefly outline the main ideas involved.

Let $L \to X^n$ be an ample line bundle over a compact complex manifold. Write $\H$ for the space of Hermitian metrics in $L$ whose  curvature is positive, i.e.\ giving a Kähler metric in $c_1(L)$. Calabi suggested in \cite{calabi-ekm} that, when one exists, a constant scalar curvature metric should be considered as a canonical representative of the Kähler class $c_1(L)$. 

The definition of balanced projective submanifolds is due to Luo \cite{luo} and Zhang \cite{zhang} (see also related work of Bourguignon, Li and Yau \cite{bourguignon-et-al}). There is an embedding $\mu \colon \C\P^m \to i\u(m+1)$, given by sending a point $p \in \C\P^m$ to the Hermitian endomorphism $\mu(p)$ of $\C^{m+1}$ given by orthogonal projection onto the line corresponding to $p$. Given a complex submanifold $Y^n \subset \C\P^m$ we consider the ``centre of mass of $Y$'', $\bar\mu \in i \u(m+1)$, given by
\[
\bar{\mu}
= \int_Y \mu\, \frac{\omegaFS^n}{n!}.
\]
The submanifold $Y$ is called \emph{balanced} if $\bar{\mu}$ is a multiple of the identity. 

To explain Donaldson's result we need a little more notation. Write $\B_k$ for the space of Hermitian inner-products on $H^0(X,L^k)$. Given $b\in \B_k$ we define a metric $\FS_k(b) \in \H$ as follows: for large $k$ a $b$-orthonormal basis of $H^0(X, L^k)$ defines an embedding $X \to \C\P^{n_k}$ (where $n_k+1 = \dim H^0(X,L^k)$); using this we pull back the Fubini--Study metric from $\O(1)$ to a metric on $L^k$ and then take the $k^{\text{th}}$-root to obtain a metric on $L$ which we denote $\FS_k(b)$. It is positively curved since its curvature is, up to scale, the restriction of the Fubini--Study Kähler form to $X$. This defines a map $\FS_k \colon \B_k \to \H$. 

Donaldson proved that balanced embeddings are the quantisation of constant scalar curvature Kähler metrics in the following sense: 

\begin{theorem}[Donaldson \cite{donaldson-1}]
\label{skd_balanced_converges}
Assume that $\Aut(X,L)/\C^*$ is discrete. If the curvature of $h \in \H$ defines a Kähler metric of constant scalar curvature then for all large $k$ there is a point $b_k \in \B_k$, unique up to multiplication by a constant, which defines a balanced embedding of $X$ in $\C\P^{n_k}$. Moreover, with the appropriate choice of scale for each $b_k$, $\FS_k(b_k) \to h$ in $C^\infty$ as $k \to \infty$.
\end{theorem}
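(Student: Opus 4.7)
The plan is to realise balanced embeddings as fixed points of a self-map of $\B_k$ and then to locate such a fixed point near a natural approximate candidate coming from $h$. Define the \emph{Hilbert map} $\Hilb_k \colon \H \to \B_k$ by sending a metric $h'$ to the $L^2$-inner-product it induces on $H^0(X,L^k)$, using $h'$ and the volume form of its curvature. Balanced embeddings correspond, modulo overall rescaling, to fixed points of $T_k = \Hilb_k \circ \FS_k \colon \B_k \to \B_k$. The obvious candidate for an approximate fixed point is $\Hilb_k(h)$ itself.

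The first step is to show $\Hilb_k(h)$ is balanced up to an error of order $k^{-2}$. This uses the Tian--Yau--Zelditch--Catlin asymptotic expansion of the Bergman kernel: the density of states of $\Hilb_k(h)$ has the form
\[
\rho_k(h) = k^n + \tfrac{1}{2}\,\Scal(h)\,k^{n-1} + O(k^{n-2}).
\]
Because $h$ is cscK the term $\Scal(h)$ is constant, so $\rho_k(h)$ is constant to order $k^{n-2}$. Translated into the language of $\B_k$, this says $T_k(\Hilb_k(h)) - \Hilb_k(h)$ is small in a suitable norm.

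The second step is to study the linearisation $L_k$ of $T_k - \id$ at $\Hilb_k(h)$ and show it is uniformly invertible transverse to the line of overall scalings. The leading order behaviour of $L_k$ is governed by the fourth-order elliptic operator $\D^*\D$ on functions; under the hypothesis that $\Aut(X,L)/\C^*$ is discrete, the kernel of $\D^*\D$ reduces to the constants, so after modding out by scalings it is injective with a positive spectral gap. Combining the two estimates, a Picard or Newton iteration starting from $\Hilb_k(h)$ yields a fixed point $b_k$ of $T_k$ at distance $O(k^{-2})$ from $\Hilb_k(h)$; the quantitative contraction forces uniqueness of $b_k$ up to rescaling. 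The convergence $\FS_k(b_k) \to h$ in $C^\infty$ then follows because $\FS_k \circ \Hilb_k$ tends to the identity by a further application of the Bergman expansion, and the closeness of $b_k$ to $\Hilb_k(h)$ propagates under $\FS_k$ with $C^\infty$ control via standard elliptic bootstrapping.

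The main obstacle is the uniform invertibility of $L_k$ in the second step. Although $L_k$ quantises $\D^*\D$, its natural norms on $\B_k$ scale differently from the Sobolev norms on functions where $\D^*\D$ has a spectral gap, so one has to work out the correct dictionary and then show that the smallest non-scalar eigenvalue of $L_k$ does not degenerate as $k \to \infty$. It is precisely at this juncture that the discreteness of $\Aut(X,L)/\C^*$ is used, ruling out the emergence of spurious approximate kernel coming from holomorphic vector fields.
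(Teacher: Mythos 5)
First, note that the paper you are reading does not prove this theorem at all: it is quoted as background from Donaldson's paper \cite{donaldson-1}, so the comparison must be with Donaldson's argument. Your outline captures the right general framework (balanced metrics as fixed points of $\Hilb_k\circ\FS_k$, approximate solution plus perturbation, discreteness of $\Aut(X,L)/\C^*$ to control the kernel), but it has a genuine quantitative gap at exactly the step you flag as ``the main obstacle''. The smallest non-scalar eigenvalue of the linearisation is \emph{not} uniformly bounded below: the Hessian of balancing energy has spectral gap of order $k^{-2}$ (Donaldson could only prove $\nu_{1,k}\geq Ck^{-4}$, Phong--Sturm improved this to $Ck^{-2}$, and the present paper shows $\nu_{1,k}=\lambda_1/(64\pi^3k^2)+O(k^{-3})$, so the degeneration is real, not an artefact of norms). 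Meanwhile the cscK hypothesis only makes $\Hilb_k(h)$ balanced up to relative error $O(k^{-2})$, since the Bergman expansion is controlled only to the $k^{n-1}$ term. The product of the error with the norm of the inverse linearisation is therefore $O(1)$ at best, and a Picard/Newton iteration started at $\Hilb_k(h)$ does not close. This is precisely why Donaldson does \emph{not} perturb directly from $\Hilb_k(h)$: he first constructs, for each $m$, metrics $h(1+\eta_1k^{-1}+\cdots+\eta_mk^{-m})$ whose Bergman function is constant to order $k^{n-m-1}$, by inductively solving linear equations whose leading term is essentially $\D^*\D$ (this is where the discreteness of $\Aut(X,L)/\C^*$ is used, to invert modulo constants), and only then runs a perturbation argument --- itself a delicate moment-map/gradient-flow argument requiring uniform control over embeddings with $R$-bounded geometry --- with an error small compared to the $k^{-4}$ bound on the inverse Hessian. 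Your proposal is missing this higher-order approximate-solution step entirely, and without it the stated strategy fails.

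A second, smaller gap concerns uniqueness. A contraction argument can only give uniqueness of the balanced point in a small neighbourhood of $\Hilb_k(h)$, whereas the theorem asserts uniqueness up to scale among \emph{all} of $\B_k$. In Donaldson's treatment this global statement comes from the geodesic convexity of balancing energy on the symmetric space $\B_k\cong\GL(n_k+1)/\U(n_k+1)$ (Zhang, Phong--Sturm, Paul; the Kempf--Ness picture recalled in \S1.1 of this paper), which forces critical points to form a single orbit of the scaling action. Your argument as written does not address this, so even granting the existence part you would only obtain local uniqueness. The final claim, that $\FS_k(b_k)\to h$ in $C^\infty$, is plausible along the lines you sketch, but it too rests on quantitative closeness of $b_k$ to the (corrected, higher-order) approximate solutions in norms strong enough to control all derivatives of the induced Fubini--Study metrics, which again presupposes the missing construction.
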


There are other strong connections between the problems of finding balanced embeddings and constant scalar curvature metrics which are suggested by Donaldson's work. Most pertinent for us is the fact that both objects of interest in Theorem \ref{skd_balanced_converges} are the critical points of functions. In the case of balanced embeddings there is for each $k$ a function $F_k \colon \B_k \to \R$, called balancing energy. A point of $\B_k$ defines a balanced embedding if and only if it is a critical point of $F_k$. Similarly there is a function $E \colon \H \to \R$, introduced by Mabuchi in \cite{mabuchi-energy}, whose critical points are exactly the metrics of constant scalar curvature. These functions are central to the study of balanced embeddings on the one hand and constant scalar curvature Kähler metrics on the other. The guiding principle of this article---drawn from \cite{donaldson-1} and the sequel \cite{donaldson-2}---is that \emph{balancing energy is the quantisation of Mabuchi energy}. 

These energy functions have an extremely important property: they are geo\-desically convex. In the case of $F_k$, this means with respect to the natural symmetric metric on $\B_k \cong \GL(n_k+1)/\U(n_k+1)$, where $n_k+1 = \dim H^0(X,L^k)$. (This was observed by Zhang, Phong and Sturm, and Paul, \cite{zhang,phong-sturm2,paul}.) For $E$, this means with respect to the Riemannian metric on $\H$ which was introduced independently by Mabuchi, Semmes  and Donaldson \cite{mabuchi-symmetric,semmes,donaldson-symmetric}. This metric is also symmetric, in the sense that its curvature is covariant constant. The functions $F_k$ and $E$ can both be seen as specific instances of Kemp--Ness functions, which arise in the study of moment maps on Kähler manifolds and which are automatically geodesically convex on the relevant symmetric space. We will say a little on this later in this section, but for more detail on this point of view, see \cite{donaldson-fields,donaldson-1}. 

An obvious approach to finding critical points of such functions is to consider their downward gradient flow. The flow of $F_k$ is called \emph{balancing flow}, whilst that of $E$ is called \emph{Calabi flow} and first appeared in Calabi's seminal article \cite{calabi-ekm}. Inspired by Theorem \ref{skd_balanced_converges}, the author proved in \cite{fine} that, provided the balancing flows on each $\B_k$ are started in appropriate places, the induced Fubini--Study metrics converge to Calabi flow for as long as it exists. 

Despite its attractive description as the gradient flow of a geodesically convex function, Calabi flow has proved somewhat intractable from the analytic point of view. This is due in no small part to the fact that it is a fourth-order equation meaning, for example, that arguments directly based on the maximum principle are no longer viable. The results of \cite{fine} suggest an alternative, geometric, approach to understanding Calabi flow, namely via the asymptotics of balancing flow. This article is, in part, a first step in this direction. Our results can be paraphrased by the statement that \emph{the Hessians of balancing energy converge to the Hessian of Mabuchi energy}. Precise sense is given to this in Theorems \ref{asymptotics-hessians}, \ref{eigenvalue-theorem} and \ref{eigenvector-theorem}, along with Theorem \ref{convergence_for_balanced}.

We close this discussion of the background material with a geometric interpretation of this convergence, coming from the general moment-map picture. Suppose a Lie group $K$ acts on a Kähler manifold $Z$ by holomorphic isometries, that the action admits a moment map and that the action extends to the complexification $G$ of $K$. Given a $G$-orbit $G \cdot x \subset Z$, the restriction of the Kähler metric from $Z$ descends, by $K$-invariance, to a (\emph{non}-symmetric) Riemannian metric on the quotient $G/K$. This metric is given precisely by the Hessian of the Kempf--Ness function. In the situation under consideration in this article, we are essentially proving that the metrics induced on the $\B_k$ in this fashion converge to that induced on $\H$ by the moment-map geometry of the scalar curvature. From this point of view our results could be compared to those of Phong and Sturm or Chen and Sun \cite{phong-sturm-geodesics,chen-sun} who investigate how the \emph{symmetric} metrics on $\B_k$ converge to the $L^2$-metric on $\H$

The layout of this article is as follows. In \S\ref{summary} we give precise statements of our main results. In \S\ref{applications} we give some immediate consequences and describe some possible applications of our results to the study of Calabi flow. In \S\ref{asymptotic_results} we outline the technical tools needed in our proofs as well as provide more references to the many prior works in these areas. The remainder of the article is devoted to explaining the proofs themselves.

\subsection{Overview of results}
\label{summary}

We now describe our main results. Let $L \to X^n$ be an ample line bundle over a compact complex manifold. Fix a Hermitian metric $h$ in $L$ whose curvature defines a Kähler metric $\omega$ on~$X$. Given these data, there is a fourth-order non-negative elliptic operator 
\[
\D^*\D \colon C^\infty(X, \R) \to C^\infty(X, \R)
\] 
defined as follows. Given a function $f$, write $v_f$ for the Hamiltonian vector field associated to $f$ via $\omega$. Define $\D \colon C^\infty(X, \R) \to \Omega^{0,1}(TX)$ by $\D(f) = \delb v_f$. Write $\D^*$ for its $L^2$-adjoint. The operator $\D^*\D$ is the Hessian of Mabuchi energy, alluded to above in \S\ref{motivation}. The fact that this operator genuinely is the Hessian of Mabuchi energy is essentially equivalent to the fact that ``scalar curvature is a moment map,'' and is proven in \cite{donaldson-fields}.

We will ``quantise'' $\D^*\D$ as follows. For each large integer $k$ we replace the infinite dimensional space $C^\infty(X, \R)$ by the space $i\u(n_k+1)$ of Hermitian matrices and we replace $\D^*\D$ by an endomorphism of $i\u(n_k+1)$ of the form $P^*_kP_k$, namely the Hessian of balancing energy. Our results describe how $P^*_kP_k$ converges to $\D^*\D$ in the limit $k \to \infty$. 

It is important to remark that $P_k^*P_k$ is \emph{not} produced from $\D^*\D$ via some general quantisation scheme, for then it would be a standard result that it converges to $\D^*\D$ as $k \to \infty$. Rather, the Hessian of balancing energy $P^*_kP_k$ is defined purely in terms of projective geometry. Its candidacy as a quantisation of $\D^*\D$ comes from the mantra that balancing energy is the quantisation of Mabuchi energy and the challenge is to prove that this special choice of quantisation has the required limit. 

To describe $P_k^*P_k$, let $Y \subset \C\P^m$ be a complex submanifold. Fix a choice of Hermitian inner-product on $\C^{m+1}$ and hence a Fubini--Study metric on $\C\P^m$. The normal bundle $N \to Y$ is the orthogonal complement of $TY \subset T\C\P^m|_Y$. Given a Hermitian endomorphism $A$ of $\C^{m+1}$, write $\xi_A$ for the corresponding holomorphic vector field on~$\C\P^m$. Projecting $\xi_A|_Y$ to $N$ defines a map $P \colon i\u(m+1) \to C^\infty(N)$. Now the Fubini--Study metric restricted to $N$ and the induced volume-form on $Y$ define an $L^2$-inner-product on $C^\infty(N)$; meanwhile, the Kill\-ing form $\tr(AB)$ gives an inner-product on $i\u(m+1)$. Define the adjoint map $P^* \colon C^\infty(N) \to i\u(m+1)$ with respect to these inner-products. Taking the composition, we obtain an endomorphism of the space of Hermitian matrices:
\[
P^*P \colon i\u(m+1) \to i\u(m+1).
\] 
That this operator is the Hessian of balancing energy is implicit in \cite{donaldson-1} and is computed directly in different ways in \cite{phong-sturm2,phong-sturm,fine}. It plays a central role in the study of balanced embeddings and in particular in Donaldson's proof of Theorem \ref{skd_balanced_converges}. 

To quantise $\D^*\D$, we apply this construction to the sequence of Kodaira embeddings corresponding to increasing powers of the ample bundle $L \to X$. Let $H^0(X, L^k)$ denote the space of holomorphic sections of $L^k$. The Hermitian metric $h$ and associated Kähler form $\omega$ define a Hermitian $L^2$-inner-product on $H^0(X, L^k)$. For sufficiently large $k$, we pick an orthonormal basis of $H^0(X, L^k)$ and hence an embedding $X \to \C\P^{n_k}$, where $\dim(H^0(X, L^k)) = n_k+1$. Define the endomorphism $P_k^*P_k$ of $i\u(n_k+1)$ by taking $Y$ in the previous paragraph to be the image of $X \to \C\P^{n_k}$.

As mentioned above, our results can be paraphrased by the heuristic statement that the Hessians $P_k^*P_k$ of balancing energy converge to the Hessian $\D^*\D$ of Mabuchi energy. To make this concrete, we need some more notation. Given a function $f \colon X \to \R$, we define a sequence of Hermitian matrices $Q_{f,k} \in i\u(n_k+1)$ as follows. Given $h \in \H$ a positively curved metric in $L$, the $L^2$-inner-product defines a Hermitian metric on $H^0(X,L^k)$ and we view this as a map $\Hilb_k \colon \H \to \B_k$ from the space of positively curved bundle metrics to the space of inner-products on $H^0(X,L^k)$. For fixed choice of $h$, the matrix $Q_{f,k}$ is the derivative at $t=0$ of $\Hilb_k$ along the path $e^{4\pi t f}h$ in $\H$.

To describe $Q_{f,k}$ explicity, let  $\{s_\alpha\}$ denote an orthonormal basis of $H^0(X, L^k)$ for the $L^2$-inner-product. With respect to this basis, $Q_{f,k}$ has matrix
\begin{equation}\label{Q-matrix}
\left(Q_{f,k}\right)_{\alpha \beta}
=
\int_X \left(4\pi k f + \Delta f \right) (s_\alpha, s_\beta) \, \frac{\omega^n}{n!}.
\end{equation}
More invariantly, $Q_{f,k}$ is the endomorphism of $H^0(X, L^k)$ given by first multiplying a holomorphic section by $(4\pi k f + \Delta f)$ and then projecting back $L^2$-orthogonally onto the space of holomorphic sections.

We can now state our first main result:

\begin{theorem}\label{asymptotics-hessians}
Under the map $\Hilb_k \colon \H \to \B_k$ the pull-back of the Hessian of balancing energy admits an asymptotic expansion in which the leading order term is the Hessian of Mabuchi energy. More precisely, let $f, g \in C^\infty(X, \R)$. As $k \to \infty$,
\[
\tr \left(  Q_{f,k} P^*_kP_k\left(Q_{g,k}\right) \right)
=
\frac{k^n}{4\pi}
\int_X f\, \D^*\D g\,\frac{\omega^n}{n!}
+
O(k^{n-1}).
\]
Moreover, if $f$ and $g$ vary in a subset of $C^\infty(X)$ which is compact for the $C^\infty$-top\-ology, then this estimate is uniform in $f$ and $g$. Finally, this estimate is uniform when taken over subsets of $\H$ which give rise to uniformly equivalent Kähler metrics lying in a compact set for the $C^\infty$-topology.
\end{theorem}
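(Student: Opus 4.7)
My plan is to express the trace as an $L^2$-inner-product of normal vector fields along the Kodaira image $X\subset\C\P^{n_k}$, identify both the tangential and normal components of the ambient holomorphic vector field attached to $Q_{f,k}$, and then invoke known asymptotics for the extrinsic geometry of the $k$-th Kodaira embedding. The self-adjointness of $P_k^*P_k$ converts the trace into
\[
\tr\bigl(Q_{f,k}\,P_k^*P_k(Q_{g,k})\bigr) \;=\; \int_X\bigl\langle\xi_{Q_{f,k}}^N,\xi_{Q_{g,k}}^N\bigr\rangle_{\omegaFS}\,\frac{\omegaFS^n|_X}{n!},
\]
where $\xi_A$ denotes the holomorphic vector field on $\C\P^{n_k}$ generated by $A\in i\u(n_k+1)$ and $(\,\cdot\,)^N$ is projection onto the normal bundle. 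By polarisation it is enough to produce a pointwise identification of the integrand for $f=g$ and track the error term.

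For the tangential part I would use the moment-map picture: the Hamiltonian of $\xi_A$ on $(\C\P^{n_k},\omegaFS)$ is the Berezin (``lower'') symbol $H_A(p)=\tr(A\mu(p))$. The defining formula \eqref{Q-matrix} identifies $Q_{f,k}$ as the Toeplitz operator with upper symbol $\phi=4\pi k f+\Delta f$, and the Bergman kernel expansion gives the standard Toeplitz symbol expansion $\sigma(\Pi_k M_\phi\Pi_k)=\phi-(4\pi k)^{-1}\Delta\phi+O(k^{-2})$ in $C^\infty$. Substituting $\phi=4\pi k f+\Delta f$, the two $\Delta f$ contributions cancel exactly---this is the whole purpose of the correction term in \eqref{Q-matrix}---leaving $H_{Q_{f,k}}|_X=4\pi k f+O(k^{-1})$. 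Since $\omegaFS|_X=k\omega+O(1)$ in $C^\infty$, the tangential part $\xi_{Q_{f,k}}^T$, being the $\omegaFS|_X$-Hamiltonian vector field of $H_{Q_{f,k}}|_X$, equals $4\pi v_f+O(k^{-1})$.

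For the normal part I would use holomorphy. Because $\xi_{Q_{f,k}}$ is holomorphic on $\C\P^{n_k}$, $\bar\partial^{\mathrm{amb}}(\xi_{Q_{f,k}}^{(1,0)}|_X)=0$. The orthogonal decomposition $T^{1,0}\C\P^{n_k}|_X=T^{1,0}X\oplus N^{1,0}$ is K\"ahler (the $(1,1)$-part of the second fundamental form vanishes), so this single equation splits into $\bar\partial^N(\xi_{Q_{f,k}}^N)^{(1,0)}=0$, i.e.\ the normal component is a holomorphic section of $N^{1,0}$, and
\[
\sigma_k\bigl((\xi_{Q_{f,k}}^N)^{(1,0)}\bigr) \;=\; -\bar\partial^X\bigl((\xi_{Q_{f,k}}^T)^{(1,0)}\bigr) \;=\; -4\pi\,\D f + O(k^{-1}),
\]
where $\sigma_k\colon N^{1,0}\to T^{1,0}X\otimes\overline{T^*X}$ is the shape operator (an $\O$-linear bundle map) of the $k$-th Kodaira embedding.

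The remaining step---and the main obstacle---is to pass from this pointwise identity to the $L^2$-estimate and to match the constant $1/(4\pi)$. This requires the asymptotic structure of $\sigma_k$: on the ``first-mode'' subbundle of $N^{1,0}$ carrying the image of $\D$, one shows that $\sigma_k^*\sigma_k$ tends to an explicit bundle endomorphism dictated by the flat model at an embedded point---essentially the same input underlying Donaldson's proof of Theorem~\ref{skd_balanced_converges} and made explicit in \cite{phong-sturm,fine}. Combined with the volume form expansion $\omegaFS^n|_X/n!=k^n\omega^n/n!+O(k^{n-1})$ and the fact that the $k$-scalings on $T^{1,0}X$ and on $\overline{T^*X}$ cancel on $\Omega^{0,1}(T^{1,0}X)$, inverting $\sigma_k$ to leading order and integrating yields $\tfrac{k^n}{4\pi}\int_X f\,\D^*\D g\,\omega^n/n!$. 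The stated uniformity follows because every asymptotic ingredient---Bergman kernel, Berezin symbol, shape operator---is known to be uniform on $C^\infty$-compact families of positively curved Hermitian metrics.
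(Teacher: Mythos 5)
Your reduction of the trace to $\int_X(\xi^\perp_{Q_{f,k}},\xi^\perp_{Q_{g,k}})$, the identification of the tangential part with the Hamiltonian data of $H(Q_{f,k})$, and the relation $F_k^*\bigl(\xi^\perp_{Q_{f,k}}\bigr)=\pm J\,\D_k H(Q_{f,k})$ coming from holomorphy of $\xi_{Q_{f,k}}$ are all correct (the last is exactly Proposition \ref{two-hessians-equation}, and the asymptotics of the shape operator you invoke is Lemma \ref{control-FF*}: $F_kF_k^*=4\pi T_k+O(k^{-1})$ with $T_k$ the orthogonal projection onto $\mathrm{im}\,F_k\subset N$). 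But the step you defer to the end is not a technical constant-chase; it is the missing half of the theorem, and it cannot be completed by ``inverting $\sigma_k$''. The operator $F_k^*$ has an enormous kernel: fibrewise $\mathrm{im}\,F_k$ has rank at most $n^2$ while $N$ has rank $n_k-n\to\infty$, so knowing $F_k^*(\xi^\perp_{Q_{f,k}})$ (equivalently $\D_kH(Q_{f,k})$) determines only $T_k\,\xi^\perp_{Q_{f,k}}$. Your argument therefore yields $\|T_kP_k(Q_{f,k})\|^2=\frac{k^n}{4\pi}\|\D f\|^2_{L^2(\omega)}+O(k^{n-1})$, i.e.\ a \emph{lower} bound for $\tr(Q_{f,k}P_k^*P_kQ_{f,k})$, but says nothing about $\|(1-T_k)P_k(Q_{f,k})\|^2$, the ``higher-mode'' normal energy. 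Holomorphy of $\xi^\perp$ as a section of $N$ (your equation (ii)) does not force this component to be $O(k^{n-1})$, and the inputs you cite do not supply it either: Donaldson's and Phong--Sturm's arguments, and Lemma \ref{control-FF*}, are all used precisely in the direction $\|\D H(A)\|^2\lesssim 4\pi\|P_k(A)\|^2$, never the reverse, and for general $A\in i\u(n_k+1)$ the reverse is false. So as it stands you have no upper bound matching the claimed asymptotic equality, and that upper bound is exactly what Theorem \ref{asymptotics-hessians} is used for later (Proposition \ref{upper-bound-result}).

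The paper avoids this mode-concentration problem entirely by going through the tangential data instead: by Lemma \ref{pointwise-identity} and Proposition \ref{hessian-balancing-identity}, $\|P_k(Q_{f,k})\|^2=\tr(Q_{f,k}^2\bar\mu_k)-4\pi\|H(Q_{f,k})\|^2_{L^2(\omega_k)}-\|\diff H(Q_{f,k})\|^2_{L^2(\omega_k)}$, and each term is expanded via the Ma--Marinescu asymptotics of the Toeplitz kernels $K_{f,k}$ and $K_{f,g,k}$ (Theorems \ref{mm-first-expansion} and \ref{mm-second-expansion}). The price of that route is that the $k^{n+2}$ and $k^{n+1}$ terms cancel between the three contributions, so one needs the subleading coefficients $q_{f,1},q_{f,2},q_{f,f,1}$ and the two-term expansion $H(Q_{f,k})=kf-\tfrac{1}{32\pi^2k}D(f)+O(k^{-2})$; the payoff is that no information about how $\xi^\perp_{Q_{f,k}}$ distributes over $N$ is ever required. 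If you want to salvage your approach you must prove the concentration statement $\|(1-T_k)P_k(Q_{f,k})\|^2=O(k^{n-1})$ for Toeplitz-type matrices $Q_{f,k}$, which is a genuinely new estimate, not a consequence of the references you list; alternatively, note also that your normalisations need care ($\mu$ here carries the factor $\tfrac{1}{4\pi}$, so $H(Q_{f,k})\approx kf$, not $4\pi kf$), since the final constant $\tfrac{1}{4\pi}$ is part of what must be verified.
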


Given the large body of work on the asymptotics of the  embeddings $X \to \C\P^{n_k}$ (the relevant parts of which are outlined in \S\ref{asymptotic_results}), the proof of Theorem~\ref{asymptotics-hessians} is not difficult, involving just careful manipulation of known expansions. By contrast, the proofs of our other principle results are more substantial.

Our second main result describes the asymptotics of the eigenvalues of $P^*_kP_k$. Write $\lambda_0 \leq \lambda_1 \leq \lambda_2 \leq \cdots$ for the eigenvalues of $\D^*\D$ and $\nu_{0,k} \leq \nu_{1,k} \leq \cdots \leq \nu_{(n_k+1)^2,k}$ for the eigenvalues of $P_k^*P_k$, in each case repeated according to their multiplicity. 

\begin{theorem}\label{eigenvalue-theorem}
Assume that $\Aut(X,L)/\C^*$ is discrete. Then for each $j = 0, 1, 
2, \ldots $
\[
\nu_{j,k} = \frac{\lambda_j}{64\pi^3 k^2}  + O(k^{-3}).
\]
Moreover, for each $j$ this estimate is uniform when the original choice of Kähler metric varies in a family of uniformly equivalent metrics which is compact for the $C^\infty$-topology.
\end{theorem}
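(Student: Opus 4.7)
I would prove both bounds via the Courant--Fischer min--max principle, using Theorem~\ref{asymptotics-hessians} to transport test vectors between $C^{\infty}(X,\R)$ and $i\u(n_k+1)$. As a preliminary step, a short Bergman kernel computation gives the asymptotic Hilbert--Schmidt pairing
\[
\tr(Q_{f,k}\, Q_{g,k}) \;=\; 16\pi^{2}\,k^{n+2}\!\int_X f g\,\frac{\omega^n}{n!} \;+\; O(k^{n+1}),
\]
derived from \eqref{Q-matrix} by writing the trace as a double integral against $|B_k(x,y)|^2$, where $B_k$ is the Bergman kernel, and using the diagonal expansion together with the rapid near-diagonal decay of $B_k$. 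Combined with Theorem~\ref{asymptotics-hessians}, this says that the rescaled map $f\mapsto (4\pi k^{(n+2)/2})^{-1}Q_{f,k}$ is an approximate isometric embedding of $L^2(X,\R)$ into $i\u(n_k+1)$ which, to leading order, intertwines $\D^*\D$ with $64\pi^3 k^2\,P_k^*P_k$.

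\textbf{Upper bound.} Fix $j$ and let $\phi_0,\dots,\phi_j$ be $L^2$-orthonormal eigenfunctions of $\D^*\D$ with eigenvalues $\lambda_0\leq\cdots\leq\lambda_j$. By the preliminary computation $V_k=\mathrm{span}(Q_{\phi_0,k},\dots,Q_{\phi_j,k})$ is $(j+1)$-dimensional for large $k$, with Gram matrix $16\pi^2 k^{n+2}\,\mathrm{Id}+O(k^{n+1})$; by Theorem~\ref{asymptotics-hessians} the matrix of $P_k^*P_k|_{V_k}$ in this basis is $(k^n/4\pi)\,\mathrm{diag}(\lambda_0,\dots,\lambda_j)+O(k^{n-1})$. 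Standard perturbation of the resulting generalised eigenvalue problem then gives Rayleigh quotients on $V_k$ equal to $\lambda_i/(64\pi^3 k^2)+O(k^{-3})$, and the min--max principle yields $\nu_{j,k}\leq \lambda_j/(64\pi^3 k^2)+O(k^{-3})$.

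\textbf{Lower bound and main obstacle.} I would establish the matching lower bound by contradiction. Suppose that for some $j$ there were $\epsilon>0$ and $k_\ell\to\infty$ along which $k_\ell^2\nu_{j,k_\ell}\leq \lambda_j/(64\pi^3)-\epsilon$. Choosing $L^2$-orthonormal eigenvectors $A_{0,k},\dots,A_{j,k}$ and forming their Berezin symbols $a_{i,k}=k^{-n/2}\widehat{A_{i,k}}$, the plan is to extract subsequential $L^2(X)$-limits $\phi_i$, verify asymptotic $L^2$-orthonormality of the $a_{i,k}$ from that of the $A_{i,k}$, and establish the weak spectral inequality
\[
\int_X \phi_i\,\D^*\D\phi_i\,\frac{\omega^n}{n!}\;\leq\;64\pi^3\lim_{\ell\to\infty} k_\ell^2\nu_{i,k_\ell}\;\leq\;\lambda_j-64\pi^3\epsilon.
\]
This produces $j+1$ orthonormal functions on which $\D^*\D$ has Rayleigh quotient strictly below $\lambda_j$, contradicting its min--max characterisation. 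The hypothesis that $\Aut(X,L)/\C^*$ is discrete enters precisely here: it ensures $\ker \D^*\D$ and $\ker P_k^*P_k$ are both one-dimensional (constants and scalar matrices), removing a potential source of spurious zero modes and making the base case $j=0$ trivial. The hardest point is the passage to the limit in the eigenvalue equation: transferring $P_k^*P_k A_{i,k}=\nu_{i,k}A_{i,k}$ to an asymptotic statement about $\D^*\D a_{i,k}$ requires control on $P_k^*P_k$ for general Hermitian matrices (beyond those in the image of the quantisation map), together with elliptic regularity for $\D^*\D$ to obtain the Sobolev compactness needed to take weak limits. Uniformity in the K\"ahler metric is then inherited automatically because every estimate used is quantitative and uniform through Theorem~\ref{asymptotics-hessians}.
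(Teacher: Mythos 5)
Your upper bound is sound and is essentially the paper's: test data $Q_{\phi_i,k}$ built from low eigenfunctions, Theorem~\ref{asymptotics-hessians} for the numerator, and the Toeplitz-kernel computation $\tr(Q_{f,k}Q_{g,k})=16\pi^2k^{n+2}\int fg\,\omega^n/n!+O(k^{n+1})$ for the Gram matrix; your direct min--max over the span is a harmless variant of the paper's projection argument. The genuine gap is the lower bound, and it sits exactly where you flag ``the hardest point'' without supplying an idea. Theorem~\ref{asymptotics-hessians} only controls $P_k^*P_k$ on matrices of the form $Q_{f,k}$, a subspace of $i\u(n_k+1)$ of fixed dimension inside a space of dimension $\sim k^{2n}$; an eigenvector $A_{i,k}$ of $P_k^*P_k$ need not be close to this image, and nothing in your scheme transfers the eigenvalue equation to a statement about $\D^*\D$ acting on its symbol. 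Worse, your claimed ``asymptotic $L^2$-orthonormality of the $a_{i,k}$ from that of the $A_{i,k}$'' is unjustified: the symbol map only satisfies the one-sided bound $\|H(A)\|_{L^2}^2\leq Ck^{-n}\tr(A^2)$ (Lemma~\ref{tr-controls-L2H}), and a priori it can crush low eigenvectors to functions of negligible norm, so your limits $\phi_i$ could vanish or coincide. That the symbol map is an approximate isometry on the span of low eigenvectors is precisely statement I2 of the paper's induction, and it is a consequence of, not an input independent of, the lower-bound machinery.

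The paper's mechanism, which your proposal lacks, is geometric: the identity $H^*\D^*\D H=P^*FF^*P$ relating the two Hessians through the second fundamental form of $X\subset\C\P^{n_k}$ (Proposition~\ref{two-hessians-equation}), the asymptotics $F_kF_k^*=4\pi T_k+O(k^{-1})$ (Lemma~\ref{control-FF*}), and Phong--Sturm's estimate $\|\xi_B^\top\|_{L^2(\omega_k)}^2\leq Ck\|P_k(B)\|^2$ (Lemma~\ref{phong-sturm-lemma}); together these yield Proposition~\ref{P*P-controls-D}, which bounds $\|\D H(B)\|_{L^2}^2$ by roughly $4\pi k^{2-n}\|P_k(B)\|^2$, and an induction on the spectral gaps of $\D^*\D$ (hypotheses I2, I3) shows that $B$ orthogonal to the approximate eigenvectors $A_{j,k}$ has $H(B)$ almost orthogonal to the low eigenfunctions, so the Rayleigh quotient of $\D^*\D$ on $H(B)$ is at least $\lambda_{r+1}$ up to controlled errors. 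Your weak spectral inequality is exactly the conclusion of this chain, not something obtainable from Bergman or Toeplitz asymptotics applied to $Q_{f,k}$. Finally, even granting the transfer, a contradiction argument with a fixed $\epsilon$ along a subsequence would only give $64\pi^3k^2\nu_{j,k}\to\lambda_j$, not the stated two-sided $O(k^{-3})$ remainder, and it would also forfeit the claimed uniformity in the metric; the paper's quantitative induction is what delivers both.
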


We now turn to our final principal result which, put briefly,  says that the eigenspaces of $P^*_kP_k$ converge to those of $\D^*\D$. To give this sense we first need to introduce some more notation. Write $\mu \colon \C\P^m \to i\u(m+1)$ for the map
\begin{equation}
\label{mu}
\mu_{\alpha \beta}
=
\frac{ x_\beta \bar x_\alpha}{4\pi \sum_\gamma|x_\gamma|^2}.
\end{equation}
(This is the same map $\mu$ which appeared in \S\ref{motivation}.) Given a Hermitian matrix $A \in i\u(m+1)$, we write $H(A) \colon \C\P^m \to \R$ for the function
\begin{equation}
\label{H}
H(A) = \tr(A\mu).
\end{equation} 
This is a Hamiltonian for the Killing vector on $\C\P^m$ associated to the skew-Herm\-itian matrix $iA$. For $A \in i\u(m+1)$, by an abuse of notation we also write $H(A)$ for the restriction of this function to $X \subset \C\P^{m}$. We can also view $H(A)$ in terms of the maps $\Hilb_k$ and $\FS_k$, namely $k^{-1}H(A)$ is the derivative of $\FS_k \colon \B_k \to \H$ in the direction $A$ at the point $\Hilb_k(h)$. With this in hand we can now state the result.

\begin{theorem}\label{eigenvector-theorem}
Assume that $\Aut(X,L)/\C^*$ is discrete. Then, when suitably scaled, the images under $H$ of the eigenspaces of $P^*_kP_k$ converge isometrically to the eigen\-spaces of $\D^*\D$. More precisely:
\begin{enumerate}
\item
Fix an integer $r>0$ and let $F_{r,k}\subset i\u(n_k+1)$ denote the span of the first $r+1$-eigenvalues of $P^*_kP_k$. There is a constant $C$ such that for all $A, B \in F_{r,k}$, 
\[
\left| \tr(AB) - 16\pi^2k^n\langle H(A), H(B) \rangle_{L^2(\omega)} \right|
\leq 
Ck^{-1}\tr(A^2)^{1/2}\tr(B^2)^{1/2}
\]
\item
Fix integers $0<p< q$ such that $\lambda_{p-1} < \lambda_p = \lambda_{p+1} = \cdots = \lambda_q < \lambda_{q+1}$. Write $V_{p}$ for the $\lambda_p$-eigenspace of $\D^*\D$ and write $F_{p,q,k}$ for the span of the $\nu_{j,k}$-eigenspaces of $P^*_kP_k$ with $p\leq j \leq q$.

Given $\phi \in V_{p}$, let $A_{\phi,k}$ denote the point $F_{p,q,k}$ with $H(A)$ nearest to $\phi$, as measured in $L^2$. Then
\[
\left\|
H(A_{\phi,k}) - \phi
\right\|^2_{L^2_2(\omega)}
=
O(k^{-1})
\]
and this estimate is uniform in $\phi$ if we require in addition that $\|\phi\|_{L^2} = 1$. 
\end{enumerate}
Finally, these estimates are uniform when the original choice of Kähler metric varies in a family of uniformly equivalent metrics which is compact for the $C^\infty$-topology.
\end{theorem}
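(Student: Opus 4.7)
The plan is to view $H \colon i\u(n_k+1) \to C^\infty(X)$ as a Berezin-type symbol map and the construction $f \mapsto Q_{f,k}$ as a Toeplitz-type partner, and to exploit the resulting asymptotic adjointness in light of the $P^*_kP_k$-spectral decomposition. Writing $\widehat f$ for multiplication by $f$ followed by $L^2$-projection onto $H^0(X,L^k)$, the definition of $\mu$ gives the exact identity $\tr(A\widehat f) = 4\pi\int_X H(A)\,f\,\rho_k\,\omega^n/n!$, where $\rho_k=\sum_\alpha|s_\alpha|^2$ is the Bergman kernel on the diagonal. Combined with $\rho_k = k^n + O(k^{n-1})$ and $Q_{f,k} = 4\pi k\widehat f + \widehat{\Delta f}$, this yields an asymptotic adjointness $\tr(A\widehat f) = 4\pi k^n\int H(A)\,f\,\omega^n/n! + O(k^{n-1})\|A\|_{HS}\|f\|_{L^2}$, and, by the near-diagonal expansion of $|B_k(x,y)|^2$, the dual identity $H(Q_{f,k}) = kf + O(1)$ in suitable norms.

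For part (1), substituting $f = H(B)$ in the adjointness identity yields
\[
16\pi^2 k^n \langle H(A), H(B)\rangle_{L^2(\omega)} = 4\pi\,\tr\bigl(A\,\widehat{H(B)}\bigr) + O(k^{n-1})\|A\|_{HS}\|B\|_{HS},
\]
so the claim reduces to an operator-norm bound $\|\Pi_{F_{r,k}}\bigl(B - 4\pi\widehat{H(B)}\bigr)\|_{HS} = O(k^{-1})\|B\|_{HS}$. The point is that on the image of $Q_{\cdot,k}/k$ the discrepancy $B - 4\pi\widehat{H(B)}$ is tautologically of order $k^{-1}$: combining $H(Q_{f,k})/k = f + O(k^{-1})$ with $Q_{f,k}/k = 4\pi\widehat f + k^{-1}\widehat{\Delta f}$ gives exactly this. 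It then suffices to show that $F_{r,k}$ coincides, up to an $O(k^{-1})$ rotation, with the span of $\{Q_{\phi_j,k}/k : 0 \leq j \leq r\}$, where the $\phi_j$ are the first $r+1$ eigenfunctions of $\D^*\D$. This identification follows from the min--max principle applied to Theorem~\ref{asymptotics-hessians} (which computes the leading order of the $P^*_kP_k$-form on this test subspace), with the sharp convergence rate supplied by the spectral gap of Theorem~\ref{eigenvalue-theorem}; this is the point at which discreteness of $\Aut(X,L)/\C^*$ is used.

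Part (2) is then spectral perturbation. By part (1), for $\phi \in V_p$ the minimizer $A_{\phi,k}$ satisfies $\|H(A_{\phi,k}) - \phi\|_{L^2} = O(k^{-1})$, since the concrete candidate $\Pi_{F_{p,q,k}}(Q_{\phi,k}/k)$ already achieves this bound via $H(Q_{\phi,k}/k) = \phi + O(k^{-1})$ and the near-isometry of $H$ on $F_{p,q,k}$. To upgrade from $L^2$ to $L^2_2$, I would use that $A_{\phi,k}$ lies in a spectral subspace with eigenvalues close to $\lambda_p/(64\pi^3 k^2)$, so Theorem~\ref{asymptotics-hessians} forces $\D^*\D H(A_{\phi,k}) = \lambda_p H(A_{\phi,k}) + O(k^{-1})$ in $L^2$; standard elliptic regularity for the fourth-order operator $\D^*\D$ then converts $L^2$-control of $(\D^*\D - \lambda_p)(H(A_{\phi,k}) - \phi)$ into $L^2_2$-control of the difference itself.

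The hardest step is the quantitative alignment of $F_{r,k}$ with the span of $\{Q_{\phi_j,k}/k\}$ at the sharp $O(k^{-1})$ rate. Theorem~\ref{asymptotics-hessians} alone gives only leading-order convergence of quadratic forms, so to obtain $O(k^{-1})$ one must keep subleading terms in the Bergman-kernel expansion under control and use the gaps from Theorem~\ref{eigenvalue-theorem} to rule out pathological mixing between low eigenvalues of $P^*_kP_k$. Uniformity in $C^\infty$-compact families of metrics then demands uniform versions of all the Bergman-kernel asymptotics invoked along the way, so the bookkeeping in this step is genuinely the main technical burden of the proof.
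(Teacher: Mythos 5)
Your plan hinges on the claim that $F_{r,k}$ agrees with the span of $\{Q_{\phi_j,k}/k\}$ ``up to an $O(k^{-1})$ rotation'', deduced from min--max applied to Theorem \ref{asymptotics-hessians} plus the spectral gaps of Theorem \ref{eigenvalue-theorem}. This is the genuine gap. Theorem \ref{asymptotics-hessians} only gives \emph{quadratic-form} (Rayleigh-quotient) information on the model vectors, with relative error $O(k^{-1})$, and Rayleigh-quotient data of that accuracy across an $O(1)$ relative gap pins a spectral subspace down only to an angle $O(k^{-1/2})$, not $O(k^{-1})$: take $T=\mathrm{diag}(0,1,2)$, $u_0=e_0$, $u_1=\cos\theta\, e_1+\sin\theta\, e_2$ with $\sin^2\theta\sim k^{-1}$; all inner products and Rayleigh quotients match to $O(k^{-1})$ while the subspace angle is $k^{-1/2}$. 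So even granting everything you cite, your reduction yields part 1 with error $O(k^{-1/2})$ rather than the stated $O(k^{-1})$, and the $L^2$ rate in part 2 degrades likewise. Moreover, establishing even the $O(k^{-1/2})$ alignment requires controlling the components of each model vector on the \emph{lower} spectral levels, i.e.\ an induction up the spectrum; and invoking Theorem \ref{eigenvalue-theorem} as an input is circular within this paper, since its lower bound (Proposition \ref{lower-bound-result}) is proved simultaneously with the very statements I2--I3 you are trying to establish, by induction on the spectral gaps of $\D^*\D$.

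The paper obtains part 1 by a different mechanism that needs no alignment with model vectors: the exact identity of Proposition \ref{hessian-balancing-identity}, $\re\tr(AB\bar\mu_k)-4\pi\langle H(A),H(B)\rangle_{L^2(\omega_k)}=\tr(A\,P_k^*P_kB)+\re\langle\xi_A^\top,\xi_B^\top\rangle_{L^2(\omega_k)}$, evaluated directly on $A,B\in F_{r,k}$; the Hessian term is $O(k^{-2})\tr(A^2)^{1/2}\tr(B^2)^{1/2}$ because the eigenvalues there are $O(k^{-2})$, the tangential term is $O(k^{-1})$ by the Phong--Sturm estimate $\|\xi_B^\top\|^2_{L^2(\omega_k)}\le Ck\|P_k(B)\|^2$ (Lemma \ref{phong-sturm-lemma}), and $\bar\mu_k=\frac{1}{4\pi}\mathrm{id}+O(k^{-1})$ in operator norm. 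This is how the sharp $O(k^{-1})$ arises. For part 2, your assertion that Theorem \ref{asymptotics-hessians} ``forces'' $\D^*\D H(A_{\phi,k})=\lambda_p H(A_{\phi,k})+O(k^{-1})$ in $L^2$ is unsupported: that theorem concerns the quadratic form on matrices $Q_{f,k}$ for fixed smooth $f$ and says nothing about $H$ of $P_k^*P_k$-eigenvectors. The paper never proves such an approximate eigenfunction equation; it controls only $\|\D H(A)\|_{L^2}$, and this requires the geometric identity $H^*\D^*\D H=P^*FF^*P$ (Proposition \ref{two-hessians-equation}) together with the asymptotics of the second fundamental form (Proposition \ref{P*P-controls-D}), plus the orthogonality statement Proposition \ref{nu-espaces-converging} to kill the component of $H(A)$ in $E_r$; the upgrade to $L^2_2$ then uses the overdetermined-elliptic estimate for the second-order operator $\D$, not elliptic regularity for the fourth-order $\D^*\D$. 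These ingredients, or substitutes for them, are absent from your proposal, and without them the quantitative link between the projectively defined $P_k^*P_k$ and the operator $\D$ acting on $H(A)$ for arbitrary low eigenvectors $A$ is missing.
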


We remark that, by Theorem \ref{eigenvalue-theorem}, $\dim F_{p,q,k} = q-p = \dim V_p$ for large $k$ and so this result tells us that, after suitably rescaling, $H$ is asymptotically an isometry from $F_{p,q,k}$ to $V_p$. We should also expand on the uniformity with respect to the Kähler metric in part 2, since different metrics will have, in general, different spectral gaps. What this uniformity means is that given some positive integer $M$, the estimates are uniform in the metric provided we limit our choice of integers $p,q$ to lie in the range $0<p<q<M$.

\subsection{Applications}
\label{applications}

Before turning to the proofs, we first give some consequences of these results, beginning with the convergence of eigenvalues (Theorem \ref{eigenvalue-theorem}). Control of the eigenvalue $\nu_{1,k}$ is critical to Donaldson's proof of Theorem~\ref{skd_balanced_converges}. Under more general circumstances, concerning certain families of projective embeddings of $X$ for each value of $k$ (so-called $R$-bounded embeddings; see Definition \ref{R-bounded}) instead of just the single embedding $\Hilb_k(h)$ we consider here, and still assuming $\Aut(X,L)/\C^*$ is discrete, Donaldson proved the existence of a constant $C>0$ such that $\nu_{1,k} \geq Ck^{-4}$. This was then refined by Phong and Sturm \cite{phong-sturm} to the lower bound $\nu_{1,k} \geq Ck^{-2}$. (See Theorem~\ref{phong-sturm-theorem} later on for a precise statement of this result. Phong and Sturm's proof of this lower bound greatly influenced the arguments in~\S\S\ref{hessians-complex-submfd}--\ref{2ff-asymptotics}.) Since the sequence $\Hilb_k(h)$  has $R$-bounded geometry, we have the following immediate corollary of Theorem~\ref{eigenvalue-theorem}:

\begin{corollary}\label{phong-sturm_sharp}
Phong and Sturm's estimate $\,\nu_{1,k} \geq Ck^{-2}$ is sharp for every polarised complex manifold $L \to X$.
\end{corollary}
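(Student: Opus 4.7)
The plan is to derive the corollary as an essentially one-line consequence of Theorem \ref{eigenvalue-theorem}, combined with a standard identification of the kernel of $\D^*\D$. First I would verify that the sequence of Kodaira embeddings $\Hilb_k(h)$ coming from a fixed $h \in \H$ fits into the $R$-bounded framework of Definition \ref{R-bounded}, so that Phong and Sturm's lower bound $\nu_{1,k} \geq C k^{-2}$ is in force for precisely the sequence of embeddings to which Theorem \ref{eigenvalue-theorem} applies. (The text of the excerpt already asserts this is the case, so no work is required here.)

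Next, I would apply Theorem \ref{eigenvalue-theorem} with $j=1$ to obtain
\[
\nu_{1,k} = \frac{\lambda_1}{64\pi^3 k^2} + O(k^{-3}).
\]
To turn this asymptotic into a matching upper bound I need $\lambda_1 > 0$, and here I would invoke the standing hypothesis that $\Aut(X,L)/\C^*$ is discrete. Indeed, unwinding the definition, $\ker(\D^*\D) = \ker \D$ is the space of real functions $f$ whose Hamiltonian vector field $v_f$ satisfies $\delb v_f = 0$, i.e.\ is holomorphic. Discreteness of $\Aut(X,L)/\C^*$ forces the Lie algebra of holomorphic vector fields on $X$ arising in this way to be trivial, so $v_f = 0$ and $f$ must be constant. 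Hence $\lambda_0 = 0$ is simple and $\lambda_1 > 0$.

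Combining the two steps, for all sufficiently large $k$ one has $\nu_{1,k} \leq 2\lambda_1/(64\pi^3 k^2)$, which together with Phong and Sturm's lower bound yields $\nu_{1,k} = \Theta(k^{-2})$. This shows that the exponent $-2$ cannot be improved, which is the content of sharpness. There is no genuine obstacle to overcome: the substantive analytic input is packaged in Theorem \ref{eigenvalue-theorem}, and the only additional ingredient is the elementary identification of $\ker \D^*\D$ with the constants under the discreteness assumption.
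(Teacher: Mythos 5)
Your proposal is correct and follows essentially the same route as the paper: identify $\ker\D^*\D$ with the constants using the discreteness of $\Aut(X,L)/\C^*$ (the paper notes the relevant vector fields lift to $L$), deduce $\lambda_1>0$, and then read off the matching upper bound $\nu_{1,k}\leq Dk^{-2}$ from Theorem \ref{eigenvalue-theorem}. No meaningful differences.
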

\begin{proof}
The kernel of $\D^*\D$ is precisely those Hamiltonian functions which generate holomorphic vector fields lifting to $L$ (see, e.g., \cite{donaldson-1}). The assumption that  $\Aut(X,L)/\C^*$ is discrete thus implies $\ker \D^*\D = \R$. It follows that $\lambda_1 >0$ and so $Ck^{-2} \leq \nu_{1,k} \leq Dk^{-2}$ for constants $D>C>0$.
\end{proof}

Our precise asymptotic description of $\nu_{1,k}$ is also sufficient to give a negative answer to a question of Donaldson, raised in \S4.3 of \cite{donaldson-1}. The question concerns the quantity
\begin{equation}
\label{Lambdak}
\Lambda_k = \max_{A\in i\su(n_k+1)} \frac{\|A\|_{\text{op}}}{\|P_k^*P_kA\|_{\text{op}}}
\end{equation}
where $\|A\|_{\text{op}}$ denotes the operator norm of the matrix $A$, i.e.\ the maximum of the moduli of its eigenvalues, and where the maximum is taken over all $A$ which are trace-free, where $P_k^*P_k$ is injective, so the fraction makes sense. (There is a typo in the relevant equation on page 520 of \cite{donaldson-1}, where the formula given is for $\Lambda^{-1}_k$ and not $\Lambda_k$ as stated.) Donaldson guesses that $\Lambda_k = O(k)$ (again, this is for $R$-bounded projective embeddings, not just the specific sequence $\Hilb_k(h)$ considered in our main results).  As he explains, such a bound would lead to a much simpler proof of the results of \cite{donaldson-1}. Unfortunately, it follows from Theorem \ref{eigenvalue-theorem} that this guess is wrong:

\begin{corollary}\label{skd_wrong}
Assume that $\Aut(X,L)/\C^*$ is discrete. Let $\Lambda_k$ be the quantity defined in (\ref{Lambdak}), where the Hessian $P^*_kP_k$ of balancing energy is evaluated at $\Hilb_k(h)$. Then there is a constant $C>0$ such that $\Lambda_k \geq Ck^2$. Moreover, this holds, no matter what norms one uses on $i\su(n_k+1)$ to define~$\Lambda_k$. 
\end{corollary}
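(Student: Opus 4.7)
The plan is to exploit the trivial but crucial observation that on an eigenvector the ratio $\|A\|/\|P_k^*P_kA\|$ is \emph{independent of the choice of norm} — it is simply the reciprocal of the eigenvalue. Combined with Theorem~\ref{eigenvalue-theorem}, which gives the upper bound $\nu_{1,k}\leq Dk^{-2}$ for some constant $D>0$, this will immediately force $\Lambda_k\geq C k^{2}$ in any norm whatsoever.

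First I would identify the correct eigenvalue to use. The identity $I\in i\u(n_k+1)$ lies in the kernel of $P_k^*P_k$ because the holomorphic vector field $\xi_I$ on $\C\P^{n_k}$ generated by the identity matrix is zero, so $P_k(I)=0$. Hence $\nu_{0,k}=0$ with eigenspace containing $\R\cdot I$. When $\Aut(X,L)/\C^*$ is discrete, Theorem~\ref{eigenvalue-theorem} together with the fact that $\ker\D^*\D=\R$ (as in the proof of Corollary~\ref{phong-sturm_sharp}) shows that for large $k$ the kernel of $P_k^*P_k$ is exactly $\R\cdot I$, and the next eigenvalue is $\nu_{1,k}>0$. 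Since $P_k^*P_k$ is self-adjoint with respect to the Killing form, its $\nu_{1,k}$-eigenspace is Killing-orthogonal to $I$, and is therefore contained in $i\su(n_k+1)$.

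Now let $A_k\in i\su(n_k+1)$ be any non-zero eigenvector of $P_k^*P_k$ with eigenvalue $\nu_{1,k}$. Because $P_k^*P_k A_k=\nu_{1,k}A_k$, for \emph{any} norm $\|\cdot\|$ on $i\su(n_k+1)$ we have
\[
\frac{\|A_k\|}{\|P_k^*P_k A_k\|}=\frac{1}{\nu_{1,k}}.
\]
Theorem~\ref{eigenvalue-theorem} gives $\nu_{1,k}=\lambda_1/(64\pi^3 k^2)+O(k^{-3})$, and $\lambda_1>0$ by the discreteness hypothesis, so for all sufficiently large $k$ we obtain $\nu_{1,k}\leq Dk^{-2}$ for a constant $D>0$ independent of $k$. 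Substituting this into the display and using that $\Lambda_k$ is defined as a maximum over all trace-free matrices, we conclude $\Lambda_k\geq 1/\nu_{1,k}\geq Ck^2$ with $C=1/D$. The same argument works verbatim for any norm on $i\su(n_k+1)$, proving the last sentence of the corollary.

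There is essentially no obstacle here: the only point to check is that the eigenvector indeed lies in $i\su(n_k+1)$ so that it is a legitimate competitor in the definition of $\Lambda_k$, and this is handled by the orthogonality observation above. Everything reduces to the elementary remark that norm equivalences become irrelevant once one tests $P_k^*P_k$ against its own eigenvectors.
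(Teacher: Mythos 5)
Your proof is correct and is essentially the paper's own argument: take a $\nu_{1,k}$-eigenvector, note the ratio equals $1/\nu_{1,k}$ in any norm, and apply Theorem \ref{eigenvalue-theorem} with $\lambda_1>0$. The only addition is your explicit check that the eigenvector is trace-free via orthogonality to the identity, which the paper leaves implicit.
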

\begin{proof}
Taking $A_k$ to be a $\nu_{1,k}$-eigenvector we see that
\[
\frac{\|A_k\|_{\text{op}}}{\|P_k^*P_kA_k\|_{\text{op}}}
=
\frac{1}{\nu_{1,k}} 
=
\frac{64 \pi^3}{\lambda_1}k^2 + O(k)
\]
It follows that $\Lambda_k \geq Ck^2$ for some $C>0$. The same constant clearly works for any choices of norm.
\end{proof}

We next give an application of Theorems \ref{asymptotics-hessians}, \ref{eigenvalue-theorem} and \ref{eigenvector-theorem} which hinges on the uniformity with respect to the underlying Kähler metric. As stated, these results apply to the specific sequence $\Hilb_k(h) \in \B_k$ of projective metrics. The uniformity enables us to extend them, however, to a different sequence, namely the balanced embeddings coming from Theorem \ref{skd_balanced_converges}. We state this here, and give the proof in \S\ref{Hessians_of_balanced}.

\begin{theorem}\label{convergence_for_balanced}
Assume that $\Aut(X,L)/\C^*$ is discrete and that $h \in \H$ has curvature a constant scalar curvature Kähler metric $\omega_{\mathrm{csc}}$. For all large $k$, write $b_k \in \B_k$ for the balanced metric, whose existence is guaranteed by Theorem \ref{skd_balanced_converges}, scaled so that $\FS(b_k) \to h$ in $C^\infty$. 

In the following, a subscript $k$ denotes an object {\bfseries computed with respect to $b_k$} (as opposed to a sequence of the form $\Hilb_k(h)$ for fixed $h$). Meanwhile, $\D^*\D$ is the operator defined by $\omega_{\mathrm{csc}}$.
\begin{enumerate}
\item
Let $f, g, \in C^\infty(X,\R)$. Then 
\[
 \tr\left(Q_{f,k}P^*_kP_k\left(Q_{g,k}\right)\right)
=
\frac{k^n}{4\pi}\int_X f \D^*\D g\, \frac{\omega^n_{\mathrm{csc}}}{n!}
+
O(k^{n-1})
\]
\item For each $j = 0, 1, 2, \ldots,$ $64 \pi^3k^2\nu_{j,k} \to \lambda_j$.
\item
Fix an integer $r>0$ and let $F_{r,k}\subset i\u(n_k+1)$ denote the span of the first $r+1$-eigenvalues of $P^*_kP_k$. Then there exists a constant $C$ such that for all $A, B \in F_{r,k}$, 
\[
\left| \tr(AB) - 16\pi^2k^n\langle H(A), H(B) \rangle_{L^2(\omega_{\mathrm{csc}})} \right|
\leq Ck^{-1}\tr(A^2)^{1/2}\tr(B^2)^{1/2}
\]
\item
Write $0 < \lambda_1 \leq \lambda_2 \leq \cdots$ for the eigenvalues of $\D^*\D$. Fix integers $0<p< q$ such that $\lambda_{p-1} < \lambda_p = \lambda_{p+1} = \cdots = \lambda_q < \lambda_{q+1}$. Write $V_{p}$ for the $\lambda_p$-eigenspace of $\D^*\D$ and write $F_{p,q,k}$ for the span of the $\nu_{j,k}$-eigenspaces of $P^*_kP_k$ with $p\leq j \leq q$.

Given $\phi \in V_{p}$, let $A_{\phi,k}$ denote the point $F_{p,q,k}$ with $H(A_{\phi,k})$ nearest to $\phi$, as measured in $L^2$. Then $H(A_{\phi,k})$ converges to $\phi$ in $L^2_2$ and this convergence is uniform in $\phi$ if we require in addition that $\|\phi\|_{L^2} = 1$. 
\end{enumerate}
\end{theorem}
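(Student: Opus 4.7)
My plan is to reduce Theorem \ref{convergence_for_balanced} to an application of Theorems \ref{asymptotics-hessians}, \ref{eigenvalue-theorem} and \ref{eigenvector-theorem} at the sequence $\Hilb_k(h_k) \in \B_k$, where $h_k := \FS_k(b_k)$. Since $h_k \to h$ in $C^\infty$ by hypothesis, the set $\{h_k : k \geq k_0\} \cup \{h\}$ is compact in $C^\infty$ and consists of mutually uniformly equivalent K\"ahler metrics for $k_0$ large enough. The uniformity clauses of the main theorems will then deliver their conclusions for $\Hilb_k(h_k)$ with error constants that are uniform in $k$.

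First I would verify that the objects $P^*_kP_k$, $H$ and $Q_{f,k}$ appearing in Theorem \ref{convergence_for_balanced}, when read as ``computed with respect to $b_k$'', coincide with the same objects computed at $\Hilb_k(h_k)$. A direct unwinding of the definitions shows that the balanced condition $\bar\mu = cI$ is equivalent to $\Hilb_k(h_k)$ being a scalar multiple of $b_k$, so the two inner products on $H^0(X, L^k)$ differ only by an overall positive scalar. Rescaling the inner product on $\C^{n_k+1}$ does not affect the Kodaira embedding $X \hookrightarrow \C\P^{n_k}$ nor the Fubini--Study K\"ahler form on $\C\P^{n_k}$, so it leaves $P^*_kP_k$ and the function $H$ unchanged. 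The matrix $Q_{f,k}$ records the multiply-by-$(4\pi k f + \Delta f)$-then-$L^2$-project operator using the $L^2$-structure determined by $h_k$ and $\omega_k$, and so it too depends only on this underlying K\"ahler data.

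Next, applying the uniform versions of the main theorems to $\Hilb_k(h_k)$ yields every statement in Theorem \ref{convergence_for_balanced} but with the K\"ahler metric $\omega_k$ of $h_k$ in place of $\omega_{\mathrm{csc}}$ and with the fourth-order operator of $\omega_k$ in place of $\D^*\D$. The final step is to replace $\omega_k$ by $\omega_{\mathrm{csc}}$ on the right-hand sides. For parts 2 and 4 this needs only the $C^\infty$-convergence $\omega_k \to \omega_{\mathrm{csc}}$, combined with the classical fact that isolated eigenvalues and finite-multiplicity eigenspaces of an elliptic operator depend continuously on its coefficients. For parts 1 and 3, however, one must show
\[
\frac{k^n}{4\pi}\left(\int_X f\,\D^*_{\omega_k}\D_{\omega_k} g\,\frac{\omega_k^n}{n!} - \int_X f\,\D^*\D g\,\frac{\omega_{\mathrm{csc}}^n}{n!}\right)
\]
is $O(k^{n-1})$, which forces $\|h_k - h\|_{C^4} = O(k^{-1})$.

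I expect this quantitative rate of convergence to be the main technical point. It does not follow from the bare $C^\infty$-convergence statement in Theorem \ref{skd_balanced_converges} but is implicit in the proof of that theorem in \cite{donaldson-1}, where $b_k$ is produced as an $O(k^{-1})$-perturbation (in $C^\infty$) of $\Hilb_k(h)$. Once this rate is in hand the rest of the argument is routine bookkeeping built on the uniformity already established.
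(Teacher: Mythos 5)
Your proposal follows essentially the same route as the paper's proof: identify the balanced metric $b_k$ (up to an overall scale) with $\Hilb_k(\FS_k(b_k))$, apply the metric-uniform versions of Theorems \ref{asymptotics-hessians}, \ref{eigenvalue-theorem} and \ref{eigenvector-theorem} along the sequence $h_l=\FS_l(b_l)$ and then diagonalise, and finally replace $\omega_k$ by $\omega_{\mathrm{csc}}$ using the quantitative rate $\omega_k=\omega_{\mathrm{csc}}+O(k^{-1})$ which, exactly as you observe, comes from the proof of Theorem \ref{skd_balanced_converges} in \cite{donaldson-1} rather than from its statement, while parts 2 and 4 need only continuity of the spectrum in the metric. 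The one place where the paper spends more care than your ``routine bookkeeping'' is part 4, where one must compare $A_{\phi,k}$ with an auxiliary nearest point associated to a $\D_k^*\D_k$-eigenvector $\phi_k\to\phi$ (the spectral clustering for $\omega_k$ may differ from that of $\omega_{\mathrm{csc}}$) and then upgrade $L^2$-convergence to $L^2_2$ via the uniform version of Lemma \ref{L22-L2-equivalent}.
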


This Theorem has a potentially important consequence for the approach mentioned in \S\ref{motivation} to understanding Calabi flow via the asymptotics of balancing flows. Under the same assumptions as in Theorem \ref{convergence_for_balanced} it is natural to expect that for any $\omega \in c_1(L)$, Calabi flow starting at $\omega$ exists for all time and converges to $\omega_{\mathrm{csc}}$. We would like to see if this can be proved via balancing flows. 

Calabi flow is known to exist for short time (see \cite{chen-he}) and so we know from \cite{fine} that for short time at least the balancing flows give metric flows which converge to Calabi flow. Meanwhile, Theorem \ref{skd_balanced_converges} tells us that there are balanced embeddings $b_k$ for all large $k$ which induce metrics which converge to $\omega_{\mathrm{csc}}$. This means that for large $k$ balancing energy is a proper function and so the balancing flow on $\B_k$ converges to $b_k$. So Theorem \ref{skd_balanced_converges} tells us that ``at $t=\infty$'' the balancing flows also converge. The difficulty is to extend this convergence back to finite values of $t$. 

As a first step, one might try to do this infinitesimally: for each balancing flow $\gamma(t)$ on $\B_k$, if we normalise $\gamma'(t)$ to have unit length, we can take a limit as $t \to \infty$ to obtain a ``tangent at infinity'' $v_k \in T_{b_k}\B_k$. Since $\gamma(t)$ is a downward gradient flow, $v_k$ is an eigenvector for the Hessian of balancing energy at $b_k$. Now Theorem \ref{convergence_for_balanced} strongly suggests that the $v_k$ converge; this limit should be the tangent at infinity to Calabi flow.

One might also envisage going further. By a classical result  of Hartman, there is a $C^1$-diffeomorphism from a neighbourhood $U \subset \B_k$ of $b_k$ to a neighbourhood $V \subset T_{b_k}\B_k$ of the origin which identifies balancing flow on $U$ with the linear flow on $V$ defined by the Hessian of balancing energy (see \cite{hartman}). By analysing the asymptotics of these diffeomorphisms one might hope to investigate the long time existence of Calabi flow, with Theorem \ref{convergence_for_balanced} providing the initial step.  

\subsection{Technical background}
\label{asymptotic_results}

In this section we introduce some of the technical tools necessary for the proofs of Theorems \ref{asymptotics-hessians}, \ref{eigenvalue-theorem} and \ref{eigenvector-theorem}. They are the product of the efforts of many people and accurately reflecting the contributions of all of them is a near impossible task, although I have done my best.

We begin with a foundational result, due to Tian \cite{tian}, along with its refinements by Ruan \cite{ruan}, Catlin \cite{catlin}, Lu \cite{lu} and Zelditch \cite{zelditch}. Let $h$ be a positively curved Hermitian metric in $L$, with corresponding Kähler form $\omega$. As above, we consider an $L^2$-orthonormal basis $\{s_\alpha\}$ of $H^0(X, L^k)$, giving a sequence of embeddings $X \to \C\P^{n_k}$. Write $h_k$ and $\omega_k$ for the restriction of the Fubini--Study metric from $\mathcal O(1) \to \C\P^{n_k}$ to $L^k \to X$. Tian's result is that $h_k^{1/k} \to h$ and $\frac{1}{k}\omega_k \to \omega$. (In our earlier notation, $h_k^{1/k} = \FS_k\circ \Hilb_k(h)$.)

More precisely, the Fubini--Study and original metrics are related as follows. Define the \emph{Bergman function} $\rho_k(\omega) \colon X \to \R$ by
\[
\rho_k(\omega) = \sum_{\alpha} | s_\alpha|^2,
\]
where $|\cdot |$ is the norm $h^k$ on sections of $L^k$. Note that $\rho_k$ does not depend on the choice of orthonormal basis, nor does it change when $h$ is multiplied by a constant; it depends solely on $\omega$ as the notation indicates. The original and Fubini--Study metrics are related by:
\[
h_k = \rho_k^{-1} h^k,\qquad
\omega_k = k \omega - \frac{i}{2\pi} \delb\del \log \rho_k.
\]
So the relation between the Fubini--Study and original metrics comes down to the asymptotic behaviour of $\rho_k$. This is given in the following result, due to the work of Catlin \cite{catlin}, Lu \cite{lu}, Ruan \cite{ruan}, Tian \cite{tian} and Zelditch \cite{zelditch}; we state it largely as it appears  in \cite{donaldson-1} (see  Proposition 6 there). For much more information on this topic, including a treatment of the purely symplectic case, see the book \cite{ma-marinescu-book}.

\begin{theorem}\label{asymptotics-Bergman}
~
\begin{enumerate}
\item
For fixed $\omega$ there is an asymptotic expansion as $k \to \infty$,
$$
\rho_k(\omega)
=
b_0(\omega)k^n + b_1(\omega) k^{n-1} + \cdots,
$$
where $n = \dim X$ and where the $b_j(\omega)$ are smooth functions on $X$ which are polynomials in the curvature of $\omega$ and its covariant derivatives.  
\item
In particular, 
$$
b_0(\omega) = 1, \quad b_1(\omega) = \frac{1}{8\pi} S(\omega),
$$
where $S(\omega)$ denotes the scalar curvature of $\omega$.
\item
The expansion holds in $C^\infty$ in that for any $r, M >0$, 
$$
\left\|
\rho_k(\omega) - \sum_{j=0}^M b_j(\omega)k^{n-j}
\right\|_{C^r(X)}
\leq
C_{r, M} k^{n-M -1}
$$
for some constants $C_{r,M}$. 
\item
The constants $C_{r,M}$ can be chosen independently of $\omega$ provided it varies in a family of uniformly equivalent metrics which is compact for the $C^\infty$ topology.
\item
It follows that $\frac{1}{k}\omega_k \to \omega$ and $h_k^{1/k} \to h$ in $C^\infty$.
\end{enumerate}
\end{theorem}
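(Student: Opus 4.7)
The plan is to follow the peak-section approach pioneered by Tian, with refinements by Ruan, Catlin, Lu and Zelditch. The idea is to analyse $\rho_k(\omega)(x_0)$ pointwise via a local model at $x_0$ and then glue local holomorphic sections into global ones using H\"ormander's $L^2$-estimate for $\delb$.

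First I would pick a point $x_0 \in X$ and choose K-coordinates (Bochner coordinates) $(z^1,\ldots,z^n)$ centred at $x_0$, together with a local holomorphic frame of $L$ in which $h$ has potential $\phi(z) = |z|^2 + R_{i\bar j k\bar\ell}\, z^i \bar z^j z^k \bar z^\ell + O(|z|^5)$. In these coordinates the leading-order local model is $\C^n$ with the Bargmann--Fock Gaussian weight $e^{-k|z|^2}$, whose Bergman kernel on the diagonal is explicitly proportional to $k^n$. I would then construct peak sections $\sigma_P$ for each multi-index $P$: start from the local model $z^P \otimes e_L^{\otimes k}$, multiply by a cutoff, and use the Kodaira--Nakano--H\"ormander $\delb$-estimate (made effective by the positive curvature $k\omega$) to correct this to a global holomorphic section with $L^2$-error that is $O(k^{-N})$ for any $N$. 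The $\sigma_P$ with $|P| \leq C k^{1/2}$ then form an almost orthonormal family, and the evaluation at $x_0$ receives a non-negligible contribution only from $\sigma_0$.

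The coefficients $b_j(\omega)$ arise from Taylor-expanding $\phi$ and the volume form $\det(g_{i\bar j})\, dV_{\mathrm{Eucl}}$ around $x_0$ and performing Gaussian moment integrals; Wick's theorem guarantees that only scalar invariants of the curvature and its covariant derivatives survive, which explains why the $b_j$ are the claimed universal polynomials, giving part 1. The leading value $b_0 = 1$ drops out of the Bargmann--Fock computation once the normalisations of $\omega \in c_1(L)$ are tracked, while $b_1 = S(\omega)/(8\pi)$ follows from expanding $\det g = 1 - \Ric_{i\bar j} z^i \bar z^j + \cdots$ together with the quartic term of $\phi$ and computing the resulting Gaussian, the scalar curvature being the only contraction of these tensors surviving at this order (part 2). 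For part 3, I would differentiate the integral representation of the Bergman projector in the local coordinates, noting that each derivative costs only a power of $k^{1/2}$ which is absorbed by the Gaussian decay, and then feed the resulting $C^0$-bound into a Sobolev bootstrap. Part 4 is immediate since every constant in the argument depends on finitely many derivatives of the metric data, so uniformity in a $C^\infty$-compact family of uniformly equivalent metrics is automatic. Part 5 follows by taking logarithms of the identity $h_k = \rho_k^{-1} h^k$ and applying $-\tfrac{i}{2\pi}\delb\del$ to the expansion of $\log \rho_k$.

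The main obstacle I expect is the bookkeeping in the identification $b_1 = S(\omega)/(8\pi)$: one must carefully track all of the conventions ($2\pi$ factors in $c_1$, the normalisation of the Fubini--Study form, the sign of the Laplacian) and systematically organise the Gaussian-moment calculation so that only the scalar-curvature contraction survives at order $k^{n-1}$, with the correct numerical coefficient. A secondary technical point is ensuring that the error introduced by the $\delb$-correction in the peak section construction is genuinely negligible at every order $k^{n-j}$ being computed, which requires off-diagonal Agmon-type exponential decay estimates for the local Bergman kernel.
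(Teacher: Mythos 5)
The paper does not prove this statement at all: Theorem \ref{asymptotics-Bergman} is quoted as a background result, attributed to Tian, Ruan, Catlin, Lu and Zelditch (stated largely as Proposition 6 of \cite{donaldson-1}), and the author simply uses it as a black box. Your sketch is therefore not ``a different route from the paper'' but rather an outline of the standard proof from the cited literature, essentially the Tian--Ruan--Lu peak-section argument, and as an outline it is sound: the local Bargmann--Fock model, the H\"ormander $\delb$-correction of cut-off model sections, the Gaussian-moment computation of the coefficients, and the derivation of part 5 from $h_k = \rho_k^{-1}h^k$ and $\omega_k = k\omega - \frac{i}{2\pi}\delb\del\log\rho_k$ are all correct ingredients. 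Two points deserve flagging as more than bookkeeping. First, the peak-section construction as you describe it most directly yields the leading behaviour (Tian's original argument gives $b_0=1$ with a $k^{-1/2}$ rate); obtaining the complete expansion to all orders $k^{n-j}$, and in every $C^r$ norm, is the genuinely hard part and in the literature is done either by Lu's refined peak-section analysis or by the Catlin--Zelditch approach via the Boutet de Monvel--Sj\"ostrand parametrix for the Szeg\H{o} kernel; your sketch treats this as a routine ``Sobolev bootstrap,'' which understates it. Second, the assertion that only $\sigma_0$ contributes at $x_0$ needs the quantitative near-orthogonality and $O(k^{-N})$ pointwise smallness of the corrected sections $\sigma_P$, $|P|\geq 1$, at $x_0$ --- you acknowledge this under ``Agmon-type decay,'' which is indeed where the work lies. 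With those caveats, your plan reproduces the accepted proof; it is not an alternative to anything in this paper, since the paper offers no proof to compare against.
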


Another important technical tool in our proofs is provided by the Berezin--Toep\-litz quantisation of the algebra of real functions on $X$. This assigns to a function $f \in C^\infty(X,\R)$, the self-adjoint endomorphism $T_{f,k}$ of $H^0(X, L^k)$ defined as follows: first multiply a section by $f$, then use the $L^2$-inner-product to project back to the space of holomorphic sections. $T_{f,k}$ is called the \emph{$k^{\text{th}}$ Toep\-litz operator} of $f$. This was first shown to be a well-defined quantisation with the correct semi-classical limit by Bordemann, Meinrenken and Schlichenmaier \cite{bordemann-et-al}. See also \cite{schlichenmaier}.

Our interest in Toeplitz operators stems from the fact that $Q_{f,k}$ is the $k^\text{th}$-Toeplitz operator of $4\pi k f + \Delta f$. In practice, it is the Toeplitz integral kernels which arise in the proof of Theorem \ref{asymptotics-hessians}. The results we will need are an asymptotic expansion of the restriction to the diagonal of the integral kernels of both $T_{f,k}$ and the composition $T_{f,k}T_{g,k}$. The existence (including in the purely symplectic case) of these expansions is due to Ma and Marinescu; see \cite[Lemma 4.6]{ma-marinescu-paper} (cf.\ also \cite[(4.79)]{ma-marinescu-paper} ,\cite[Lemma 7.2.4 and (7.2.6)]{ma-marinescu-book}). The calculation of the coefficients, which is  essential for our application, was carried out in  \cite{ma-marinescu}.

We now describe the parts of the expansions we will need.
The kernel of $T_{f,k}$ is a section of $L_{1}^{k}\otimes (L_{2}^{k})^{*}$ over $X \times X$, where $L_{1}$ denotes the pull-back of $L$ from the first factor and $L_{2}$ the pull-back from the second factor. Explicitly, if $\{s_\alpha\}$ is an $L^2$-orthonormal basis, the kernel is 
\[
\tilde K_{f,k}(x,y) 
= 
\sum_{\alpha, \beta}
\int_{X} f(z) (s_{\alpha}, s_{\beta})(z) \, 
s_{\beta}(y) \otimes s_{\alpha}^{*}(x) \, \frac{\omega^{n}_{z}}{n!}
\]
where $s^{*}$ is the section of $(L^{k})^{*}$ metric-dual to $s$. On restriction to the diagonal, $L_{1}$ and $L_{2}$ are identical and so $\tilde K_{f,k}(x,x)$ is a function which we write as $K_{f,k}$:
\[
K_{f,k}(x) 
=
\tilde K_{f,k}(x,x) 
=
\sum_{\alpha, \beta}
\int_{X} f(z) (s_{\alpha}, s_{\beta})(z)
(s_{\beta}, s_{\alpha})(x) \, \frac{\omega^{n}_{z}}{n!}
\]

\begin{theorem}[Ma--Marinescu \cite{ma-marinescu}]
\label{mm-first-expansion}
There is an asymptotic expansion
\[
K_{f,k} = q_{f,0}k^{n} + q_{f,1}k^{n-1} + q_{f,2}k^{n-2} + \cdots,
\]
for smooth functions $q_{f,j}$. This holds in $C^{\infty}$ in the same sense as Theorem~\ref{asymptotics-Bergman}. Moreover, there are the following formulae for the first three coefficients:
\begin{eqnarray*}
q_{f,0} 
	&=& 
		f,\\
q_{f,1} 
	&=& 
		\frac{S(\omega)}{8\pi} f - \frac{1}{4\pi} \Delta f,\\
q_{f,2} 
	&=& 
		b_{2}f 
		+
		\frac{1}{32 \pi^{2}} \Delta^{2} f
		-
		\frac{1}{32 \pi^{2}} S(\omega) \Delta f
		+
		\frac{1}{8\pi^{2}} ( \Ric, i \delb\del f),
\end{eqnarray*}
where $b_{2}$ is the coefficient in the expansion of the Bergman function of Theorem~\ref{asymptotics-Bergman} and $\Ric$ is the Ricci form of $\omega$. The expansion is uniform in $f$ varying in a subset of $C^\infty(X)$ which is compact for the $C^\infty$-topology. Finally, it is also uniform in the Kähler metric provided it varies in a family of uniformly equivalent metrics which are compact for the $C^\infty$-topology.
\end{theorem}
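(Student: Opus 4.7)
The plan is to reduce everything to the near-diagonal behaviour of the Bergman kernel, which is by now very well-understood. The starting observation is that the matrix-valued quantity $\sum_{\alpha\beta}(s_\alpha,s_\beta)(z)(s_\beta,s_\alpha)(x)$ appearing in the definition of $K_{f,k}(x)$ is exactly the pointwise squared norm $|B_k(x,z)|^2$ of the Bergman kernel $B_k(x,z) = \sum_\alpha s_\alpha(x)\otimes s_\alpha(z)^\ast$ of $H^0(X,L^k)$. Hence
\[
K_{f,k}(x) = \int_X f(z)\,|B_k(x,z)|^2\,\frac{\omega^n_z}{n!},
\]
and the theorem becomes a statement about the asymptotics of this integral.

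The second step is to invoke the near-diagonal expansion of $B_k$ due to Ma--Marinescu. Fix $x_0 \in X$, choose K\"ahler normal coordinates $z$ centred at $x_0$ together with a holomorphic frame of $L$ in which $h = e^{-\phi}$ with $\phi(z) = \pi|z|^2 + O(|z|^4)$, and introduce the rescaled variable $u=\sqrt{k}\,z$. In these coordinates the Ma--Marinescu expansion gives, for any fixed $N$,
\[
k^{-n}|B_k(0, u/\sqrt{k})|^2 \;=\; e^{-\pi|u|^2}\Bigl(1 + \sum_{j=1}^{N} k^{-j/2}P_j(u,\bar u)\Bigr) + O(k^{-(N+1)/2}),
\]
where the $P_j$ are polynomials whose coefficients are polynomial expressions in the curvature of $\omega$ and its covariant derivatives at $x_0$, known explicitly at least through $P_2$. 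The remainder is controlled uniformly in the sense of Theorem~\ref{asymptotics-Bergman}, and one has full exponential control away from the diagonal to cut the integration off to a small geodesic ball.

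Third, Taylor-expand $f$ and the volume form $\omega^n/n!$ in the same coordinates through the appropriate order, substitute $z=u/\sqrt{k}$, multiply everything together and integrate the Gaussian factor over $\C^n$. Odd-degree monomials in $u,\bar u$ integrate to zero against $e^{-\pi|u|^2}$, so only even powers of $k^{-1/2}$ contribute and the expansion is genuinely in $k^{-1}$. The leading coefficient is $q_{f,0}(x_0)=f(x_0)\cdot \int_{\C^n}e^{-\pi|u|^2}du = f(x_0)$ once one uses $\rho_k(\omega) = k^n + O(k^{n-1})$. For $q_{f,1}$, three contributions combine: the second-order Taylor term of $f$ (giving $-\frac{1}{4\pi}\Delta f$ through the integral $\int u_i\bar u_j e^{-\pi|u|^2} du$), the $k^{-1/2}$-correction in the Bergman kernel (which on the diagonal recovers $b_1 = S(\omega)/8\pi$ of Theorem~\ref{asymptotics-Bergman} and multiplies $f$), and the volume-form correction, which vanishes at this order in K\"ahler normal coordinates. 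This yields the stated $q_{f,1}$. The formula for $q_{f,2}$ arises by the same mechanism at one higher order.

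The main obstacle is the bookkeeping for $q_{f,2}$. One must combine the second Bergman correction (whose diagonal value is $b_2$, explaining the $b_2 f$ summand), the cross-term of $b_1$ with the quadratic Taylor part of $f$ (producing $-\frac{1}{32\pi^2}S(\omega)\Delta f$), the quartic Taylor part of $f$ (producing $\frac{1}{32\pi^2}\Delta^2 f$ via the Gaussian fourth moment), and the genuinely new term $\frac{1}{8\pi^2}(\Ric, i\delb\del f)$, which comes from pairing the quadratic Taylor part of $f$ with the explicit curvature-dependent coefficient $P_2$ in Ma--Marinescu's near-diagonal expansion. All signs and factors of $\pi$ have to be tracked through the rescaling $u=\sqrt{k}z$ and through the Gaussian moment formulas $\int u^\alpha \bar u^\beta e^{-\pi|u|^2}du = \delta_{\alpha\beta}\alpha!\,\pi^{-|\alpha|}$. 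The $C^\infty$-uniformity statements are inherited from those of Theorem~\ref{asymptotics-Bergman}: the remainder estimates in the Gaussian integration step depend only on finitely many $C^r$-norms of $f$ and of the curvature of $\omega$, which are uniformly bounded under the stated hypotheses.
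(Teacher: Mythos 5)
First, a point of comparison: the paper does not prove this statement at all --- Theorem \ref{mm-first-expansion} is quoted from Ma--Marinescu \cite{ma-marinescu}, and the only thing the paper adds is the remark that the uniformity in $f$ and in the metric follows from their estimates. So your proposal has to be judged as a proof of the cited result, and in spirit it follows the same route as the reference: the identity $K_{f,k}(x)=\int_X f(z)\,|B_k(x,z)|^2\,\frac{\omega_z^n}{n!}$ is correct, and localizing via off-diagonal decay, rescaling by $\sqrt{k}$ and integrating the near-diagonal expansion against Gaussian moments is exactly the standard (Ma--Marinescu style) strategy.

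However, as written there is a genuine gap, and it sits precisely where the content of the theorem lies. What the paper needs are the explicit formulae for $q_{f,1}$ and $q_{f,2}$ --- in particular the term $\frac{1}{8\pi^{2}}(\Ric, i\delb\del f)$ and the exact constants $\frac{1}{32\pi^{2}}$ --- and your argument never produces them: you invoke ``explicit'' near-diagonal polynomials $P_1,P_2$ without writing them down, and the step from $P_2$ to the curvature term is asserted, not computed; you yourself flag this as ``the main obstacle''. Until that bookkeeping is actually carried out (or the coefficients are imported from \cite{ma-marinescu}, as the paper does), the statement is not proved. There is also a normalization slip: on the diagonal $|B_k(x,x)|^2=\rho_k(x)^2\sim k^{2n}$, so the correctly rescaled object is $k^{-2n}|B_k(0,u/\sqrt{k})|^2$ (with the Gaussian constant fixed by the convention $\omega=\frac{i}{2\pi}F_h$); with your normalization $k^{-n}|B_k|^2\approx e^{-\pi|u|^2}(1+\cdots)$ the integral would come out $O(1)$ rather than $f\,k^{n}+O(k^{n-1})$, and the ad hoc appeal to $\rho_k=k^n+O(k^{n-1})$ to rescue the leading term is a symptom of this. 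A useful check to add once the computation is done: integrating the expansion over $X$ must give $\int_X f\rho_k\,\frac{\omega^n}{n!}$, i.e.\ $\int_X q_{f,j}=\int_X f\,b_j$, which the stated coefficients satisfy and which pins down several of the constants for free.
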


The uniformity in $f$ and the metric is not mentioned explicitly in \cite{ma-marinescu}, but it follows immediately, since the estimates in their proofs only involve $f$, the metric and their derivatives. Notice that when $f=1$, $K_{f,k} = \rho_{k}$ and so we can see Theorem~\ref{mm-first-expansion} as a generalisation of Theorem~\ref{asymptotics-Bergman}. In fact, Ma--Marinescu prove a more general result than this as they consider Toeplitz operators for $L^k \otimes E$ where $E$ is a holomorphic vector bundle; we merely state here the version which is sufficient for our needs.

We next turn to the kernel of the composition $T_{f,k}\circ T_{g,k}$,
\[
\tilde K_{f,g,k}(x,y) = \int_{X} \tilde K_{f,k}(x,z) \tilde K_{g,k}(z,y)\, \frac{\omega^{n}_{z}}{n!}.
\]
Restricting to the diagonal gives the function
\[
\tilde K_{f,g,k}(x,x)
=
\sum_{\alpha, \beta, \gamma}
\int_{X\times X}
f(y) g(z)
(s_{\alpha}, s_{\beta})(y)
(s_{\beta}, s_{\gamma})(z)
(s_{\gamma}, s_{\alpha})(x)
\,\frac{\omega^{n}_{y}\wedge \omega^{n}_{z}}{(n!)^{2}}.
\]
We write $K_{f,g,k}(x) = \tilde K_{f,g,k}(x,x)$. Ma and Marinescu also compute the asymptotic expansion of this.

\begin{theorem}[Ma--Marinescu \cite{ma-marinescu}]
\label{mm-second-expansion}
There is an asymptotic expansion
\[
K_{f,g,k} 
= 
q_{f,g,0}k^{n} + q_{f,g,1}k^{n-1} + q_{f,g,2}k^{n-2} + \cdots,
\]
for smooth functions $q_{f,g,j}$. This holds in $C^{\infty}$ in the same sense as Theorem~\ref{asymptotics-Bergman}. Moreover, the first two coefficients satisfy:
\begin{eqnarray*}
q_{f,g,0} 
	&=& 
		fg,\\
q_{f,f,1} 
	&=& 
		\frac{S(\omega)}{8\pi} f^{2} 
		- 
		\frac{1}{2\pi} f\Delta f
		+
		\frac{1}{4\pi}|\diff f|^{2}.
\end{eqnarray*}
The expansion is uniform in $f,g$ varying in a subset of $C^\infty(X)$ which is compact for the $C^\infty$-topology. Finally, it is also uniform in the Kähler metric provided it varies in a family of uniformly equivalent metrics which are compact for the $C^\infty$-topology.
\end{theorem}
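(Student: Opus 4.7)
The plan is to reduce the expansion of $K_{f,g,k}$ to two well-studied inputs: the near-diagonal asymptotic expansion of the Bergman kernel of $L^k$, and the local model calculation (Gaussian integrals against the Bargmann--Fock kernel). I would begin by rewriting the composition kernel in a form better suited to the Bergman-kernel technology. Using the reproducing property $\int_X B_k(\cdot, w)B_k(w,\cdot)\frac{\omega^n_w}{n!} = B_k$, one has $\tilde K_{f,k}(x,y)= \int_X f(w)B_k(x,w)B_k(w,y)\frac{\omega^n_w}{n!}$, and combining this with the analogous formula for $\tilde K_{g,k}$ gives, after one application of the reproducing property,
\[
K_{f,g,k}(x)
=
\int_{X\times X} f(y)g(z)\, B_k(x,y)B_k(y,z)B_k(z,x)\, \frac{\omega^n_y\wedge\omega^n_z}{(n!)^2}.
\]

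Next I would invoke the localised near-diagonal expansion of the Bergman kernel $B_k$ in K\"ahler normal coordinates at $x$: with the rescaling $y = x + u/\sqrt{k}$, $z = x + v/\sqrt{k}$ one has a uniform expansion
\[
k^{-n}B_k\bigl(x + u/\sqrt k,\, x + v/\sqrt k\bigr)
=
e^{\,\pi u \cdot \bar v - \pi(|u|^2+|v|^2)/2}\Bigl(1 + k^{-1/2}F_1(u,v) + k^{-1}F_2(u,v) + \cdots\Bigr),
\]
where the $F_j$ are polynomials in $u,v$ whose coefficients are universal expressions in the curvature of $\omega$ at $x$ and its covariant derivatives. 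Off the scale $|u-v|\sim 1$ the kernel decays super-polynomially, so the outer integral concentrates near $y=z=x$; after Taylor expanding $f$, $g$ and the volume form in $u,v$, the computation of $K_{f,g,k}$ reduces to a polynomial-weighted Gaussian integral in $(u,v) \in \C^n\times\C^n$ (one Gaussian factor per Bergman kernel), with an overall scaling $k^n$. Integrating order by order in $k^{-1/2}$ produces the announced expansion, the $q_{f,g,j}$ being explicit contractions of $\nabla^a f$, $\nabla^b g$ with curvature terms.

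At leading order only the constant term of each $F_j$ and the values $f(x),g(x)$ enter, and the Gaussian integral of the product of three Bargmann--Fock factors is normalised to $1$, giving $q_{f,g,0}=fg$. At first subleading order I would collect three types of contribution: (i) the second Taylor jets of $f$ and $g$, producing $|df|^2$ and $\Delta f$ (for $f=g$); (ii) the first nontrivial Bergman-kernel correction $F_1$, whose Gaussian moments produce a multiple of the scalar curvature; and (iii) the expansion of $\omega^n_y\wedge\omega^n_z/(n!)^2$ in normal coordinates. Summing these, and performing the bookkeeping to compare with the formula for $q_{f,1}$ in Theorem~\ref{mm-first-expansion} (which is recovered by setting $g\equiv 1$), yields $q_{f,f,1}=\frac{S(\omega)}{8\pi}f^2-\frac{1}{2\pi}f\Delta f+\frac{1}{4\pi}|df|^2$. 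An equivalent shortcut is to use the product formula $T_{f,k}T_{g,k}=\sum_j k^{-j}T_{C_j(f,g),k}$ of Bordemann--Meinrenken--Schlichenmaier and then apply Theorem~\ref{mm-first-expansion} term by term; one only needs the explicit form of $C_1(f,f)$ from the Berezin star product on a K\"ahler manifold.

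The main obstacle is purely computational: tracking the three independent sources of $k^{-1}$ corrections and matching the numerical constants, in particular reconciling sign and normalisation conventions for the Laplacian, scalar curvature and Fubini--Study K\"ahler form. Uniformity in $f,g$ and in $\omega$ follows automatically from the proof, as all estimates on Bergman-kernel remainders are polynomial in finitely many derivatives of $f$, $g$ and of the metric, and so transfer from the pointwise setting to any $C^\infty$-compact family.
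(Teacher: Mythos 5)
This statement is not proved in the paper at all: Theorem~\ref{mm-second-expansion} is quoted from Ma--Marinescu \cite{ma-marinescu} (cf.\ \cite{ma-marinescu-paper,ma-marinescu-book}), and the only argument the paper supplies is the remark that the uniformity in $f$, $g$ and the metric follows because the estimates in the cited proofs involve only finitely many derivatives of $f$, $g$ and the metric. Your primary route --- rewriting $K_{f,g,k}(x)$ as the triple integral of $f(y)g(z)$ against $B_k(x,y)B_k(y,z)B_k(z,x)$, localising by the off-diagonal decay of the Bergman kernel, rescaling by $\sqrt{k}$, and computing polynomial-weighted Gaussian (Bargmann--Fock) moments order by order --- is exactly the analytic-localisation strategy of the cited source, and your treatment of uniformity is the same observation the paper makes. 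So as a plan you are reconstructing the literature proof rather than diverging from anything in this article.

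Two caveats. First, the ``equivalent shortcut'' via the Bordemann--Meinrenken--Schlichenmaier product formula $T_{f,k}T_{g,k}\sim\sum_j k^{-j}T_{C_j(f,g),k}$ is not equivalent as stated: that expansion is an operator-norm statement \cite{bordemann-et-al,schlichenmaier}, and operator-norm asymptotics do not control the restriction of the Schwartz kernel to the diagonal in $C^\infty$, which is what Theorem~\ref{mm-second-expansion} asserts; the kernel-level expansion is precisely the content of Ma--Marinescu's work, so this route is circular unless you first establish the kernel expansion anyway. Second, the substantive content of the statement is the exact value of $q_{f,f,1}$, and your proposal only names the three sources of $k^{-1}$ contributions without performing the Gaussian moment computation; the consistency check against Theorem~\ref{mm-first-expansion} by setting $g\equiv 1$ (equivalently, using $\int_X K_{f,g,k}=\int_X fK_{g,k}$) only constrains the combination $\int_X\bigl(\tfrac{S}{8\pi}f^2-\tfrac{1}{4\pi}f\Delta f\bigr)$ and cannot separate the coefficients of $f\Delta f$ and $|\diff f|^2$, which is exactly the splitting that matters in the application (Lemma~\ref{asymptotics-trQ2mu}). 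So the computation of the subleading Gaussian moments must genuinely be carried out (or cited), not merely matched against the $g\equiv 1$ case.
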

Again, Ma and Marinescu prove far more than this: they  work with $L^{k}\otimes E$ for a holomorphic vector bundle~$E$ and also give formulae for $q_{f,g,j}$ for $j = 0,1,2$; we have simply stated here the part of their work which we will need. We also remark that, as for Theorem \ref{mm-first-expansion}, that the uniformity in $f$, $g$ and the metric is not mentioned explicitly in \cite{ma-marinescu}, but it follows directly from their proof since only the derivatives of $f$, $g$ and the metric appear in the estimates.

As a final remark we note that the asymptotic expansions of the kernels described here imply in turn an asymptotic expansion of the product $T_{f,k} \circ T_{g,k}$ itself. This latter expansion was found by different methods by Schlichenmaier \cite{schlichenmaier}.

\subsection{A note on conventions}

We make the convention that when the symbol $C$ appears in an estimate, it denotes a constant which may change from line to line. It is always, however, independent of whatever else appears in the estimate. 

Whenever a norm, such as $L^2$, is used and no further comment is made then it is defined with respect to the fixed metric $\omega$ (rather than, say, the the sequence $k^{-1}\omega_k$ of projective approximations). 

There are various scale conventions used in the literature. In order to help the reader translate known results to fit with our scaling conventions, we state them all clearly here. (Our choice of conventions has been made to agree with those of Ma and Marinescu, whose asymptotic results we rely heavily on in \S\ref{hessian-asymps-section}.)
 
Given a positively curved Hermitian metric $h$ in $L$, we write $\omega = \frac{i}{2\pi} F_h \in c_1(L)$ for the associated Kähler form. We write $\Ric(\omega)$ for the Ricci form of $X$. Explicitly, $\Ric = -iF_K$ where $F_K$ is the curvature of the canonical bundle with Hermitian metric induced by $\omega$. We define the scalar curvature $S(\omega)$ by $S = 2(\Ric, \omega)$ where $(\cdot, \cdot)$ is the Riemannian inner-product associated to $\omega$. Equivalently, $S \omega^n = 2n \Ric \wedge \omega^{n-1}$. So, for example, for the Fubini--Study metric on $\mathcal O(1) \to \C\P^1$, $\omega$ has area 1, $\Ric = 4\pi \omega$ and $S = 8\pi$. 

With these conventions, if $e^{4\pi f t}h$ is a path of Hermitian metrics in $L$, then $\frac{\del S}{\del t} = D(f)$ for the fourth-order operator:
\begin{equation}
\label{linearisation-scalar-curvature}
D(f) = \Delta^2 f - 2(\Ric, 2i \delb\del f).
\end{equation}
Here $\Delta$ is the positive $\diff$-Laplacian. This is related to $\D^*\D$ by the equation
\begin{equation}
\label{principal-part}
2 \D^*\D(f) = D(f) + (\diff S, \diff f).
\end{equation}
The derivation of equations (\ref{linearisation-scalar-curvature}) and (\ref{principal-part}) is standard. See, for example, \cite{lebrun-simanca} and \cite{donaldson-fields}.

We mention also that sometimes in the literature the factor of $\frac{1}{4\pi}$ in the definition of $\mu$ is not used (see equation (\ref{mu}) above).

\subsection{A remark on uniformity with respect to the metric}
\label{uniformity_metric}

We pause to make a remark about the uniformity of the expansions in our Theorems \ref{asymptotics-hessians}, \ref{eigenvalue-theorem} and \ref{eigenvector-theorem} with respect to the Kähler metric. This follows in a straightforward fashion from the analogous uniformity in Theorems \ref{asymptotics-Bergman}, \ref{mm-first-expansion} and \ref{mm-second-expansion} together with the fact that all other estimates are metric-based. Consequently we state now once and for all that \emph{all results in this article are uniform with respect to the Kähler metric, provided it varies in a family of uniformly equivalent metrics which are compact for the $C^\infty$-topology}. We do not mention this point further in order to avoid littering the statements and proofs with repetitions of these facts.

\subsection{Acknowledgements}

I am indebted to Xiaonan Ma and George Marinescu who came to my aid with Theorems \ref{mm-first-expansion} and \ref{mm-second-expansion} when I needed the higher-order terms in these expansions. I would also like to thank Xiaonan Ma for kindly answering my many questions about such asymptotics. Thanks also go to Song Sun for interesting conversations concerning quantisation and Sam Lisi for explaining Hartman's work \cite{hartman} to me. Finally, I am grateful to the anonymous referees who commented on an earlier version of this article; they helped me greatly improve both the exposition and the strength of the results.

\section{Asymptotics of the Hessian of balancing energy}
\label{hessian-asymps-section}

In this section we prove Theorem \ref{asymptotics-hessians}. Our starting point is the following.
\begin{lemma}
\label{pointwise-identity}
Let $A, B \in i\u(m+1)$. The following identity holds pointwise on $\C\P^m$:
\[
4\pi H(A) H(B) + (\xi_A, \xi_B)_\mathrm{FS} = \tr(AB\mu)
\]
where $H(A)$ and $H(B)$ are defined in equation (\ref{H}), $\xi_A$ and $\xi_B$ are the holomorphic vector fields associated to $A$ and $B$ respectively, $(\cdot, \cdot)_\mathrm{FS}$ is the Hermitian Fubini--Study metric and $\mu$ is defined in equation (\ref{mu}).
\end{lemma}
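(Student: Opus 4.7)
The plan is to reduce the pointwise identity to a single-point check using $\U(m+1)$-equivariance, and then to verify it there by a direct coordinate computation.

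First I would note that, under the simultaneous action $p \mapsto g\cdot p$ on $\C\P^{m}$ and $A \mapsto gAg^{-1}$, $B \mapsto gBg^{-1}$ on $i\u(m+1)$ for $g \in \U(m+1)$, both sides of the claimed identity transform in the same way. From \eqref{mu} one has $\mu(g\cdot p) = g\,\mu(p)\,g^{-1}$, hence $H(gAg^{-1})(g\cdot p) = H(A)(p)$ and similarly for the $\tr(AB\mu)$ term; the holomorphic vector fields satisfy $\xi_{gAg^{-1}} = g_{*}\xi_{A}$, and the Fubini--Study metric is $\U(m+1)$-invariant. Since $\U(m+1)$ acts transitively on $\C\P^{m}$, it is enough to verify the identity at the single point $p_{0} = [1:0:\cdots:0]$. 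Plugging $x = e_{0}$ into \eqref{mu} gives $\mu|_{p_{0}} = \frac{1}{4\pi}E_{00}$, where $E_{00}$ has a $1$ in the $(0,0)$-entry and zeros elsewhere, so $H(A)|_{p_{0}} = A_{00}/(4\pi)$ and $\tr(AB\mu)|_{p_{0}} = (AB)_{00}/(4\pi)$. Subtracting $4\pi H(A)H(B)|_{p_{0}} = A_{00}B_{00}/(4\pi)$ yields
\[
\tr(AB\mu)|_{p_{0}} - 4\pi H(A)H(B)|_{p_{0}} = \frac{1}{4\pi}\sum_{\gamma=1}^{m} A_{0\gamma} B_{\gamma 0},
\]
so the identity reduces to computing $(\xi_{A}, \xi_{B})_{\mathrm{FS}}$ at $p_{0}$ and checking that it equals the same sum.

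In affine coordinates $z_{i} = x_{i}/x_{0}$ near $p_{0}$ I would obtain $\xi_{A}|_{p_{0}}$ by differentiating the holomorphic one-parameter family $[e^{tA}e_{0}]$ at $t=0$: this produces a $(1,0)$-tangent vector whose components are linear in the entries $A_{i0}$. Pairing with $\xi_{B}$ using the Fubini--Study Hermitian metric at the origin --- which in these coordinates is a fixed scalar multiple of the standard Hermitian form --- gives exactly $\frac{1}{4\pi}\sum_{\gamma \geq 1} A_{0\gamma} B_{\gamma 0}$. A more invariant route avoids coordinates altogether: writing $p = [x]$ and $\Pi = I - xx^{\dagger}/|x|^{2}$ for the orthogonal projection of $\C^{m+1}$ onto $x^{\perp}$, a short algebraic manipulation (using that $A, B$ are Hermitian and that $\langle x, Ax\rangle$ is real) gives
\[
\tr(AB\mu) - 4\pi H(A)H(B) = \frac{\langle \Pi Ax, \Pi Bx\rangle}{4\pi|x|^{2}}.
\]
Under the natural identification $T^{1,0}_{[x]}\C\P^{m} \cong x^{\perp}$, $\xi_{A}|_{[x]}$ corresponds to $\Pi(Ax)$, and the Fubini--Study Hermitian metric transports to $\langle \cdot, \cdot\rangle/(4\pi|x|^{2})$; matching the two formulas concludes the proof in one line.

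The main obstacle is nothing more than bookkeeping of conventions: one must keep straight the factors of $2\pi$ in the Fubini--Study normalization, the distinction between the Hermitian and Riemannian inner products on $T\C\P^{m}|_{Y}$, and the precise convention fixing $\xi_{A}$ as a $(1,0)$-vector field. None of these points is conceptually difficult, but the $4\pi$ in the identity depends on all of them being handled consistently.
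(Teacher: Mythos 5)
Your overall strategy is fine, and it is worth noting that the paper itself contains no proof of this lemma: it simply cites Lemma 18 of \cite{fine} (and \cite{phong-sturm}) together with a remark on how to rescale $H$, $\mu$ and $(\cdot,\cdot)_{\mathrm{FS}}$ to the present conventions. So a self-contained verification like yours is a genuine addition, and its two pillars are sound: the reduction by $\U(m+1)$-equivariance (harmlessly modified by the fact that the $\mu$ of (\ref{mu}) is the \emph{transpose} of the usual projection, so the equivariance is conjugation by $\bar g$ rather than $g$), and the algebraic identity
\[
\tr(AB\mu) - 4\pi H(A)H(B) = \frac{\langle \Pi Ax, \Pi Bx\rangle}{4\pi|x|^{2}},
\]
which is a correct one-line computation with $\Pi = I - xx^{\dagger}/|x|^{2}$.

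The gap is exactly the step you wave off as bookkeeping, because in a statement whose entire content is a constant, the bookkeeping \emph{is} the proof. You define $\xi_A$ by differentiating $[e^{tA}x]$ and then assert that the Fubini--Study Hermitian metric transports to $\langle\cdot,\cdot\rangle/(4\pi|x|^{2})$ under $T^{1,0}_{[x]}\C\P^m \cong x^{\perp}$. With the paper's normalisation $\omegaFS = \frac{i}{2\pi}\del\delb\log|x|^{2}$ (unit area on $\C\P^1$), the Hermitian metric on $T^{1,0}$ at $[e_0]$ is $\frac{1}{2\pi}\delta_{ij}$, not $\frac{1}{4\pi}\delta_{ij}$; paired with your $\xi_A$ this yields $\frac{1}{2\pi}\sum_{\gamma\geq 1}A_{0\gamma}B_{\gamma 0}$ and the identity fails by a factor $2$. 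What actually makes the stated identity consistent with the rest of the paper is the normalisation of $\xi_A$ implicit in the proof of Proposition \ref{hessian-balancing-identity}, where $\xi_A^{\top} = \nabla H(A)$ and $\re(\cdot,\cdot)_{\mathrm{FS}}$ is the Fubini--Study Riemannian metric: with $H(A) = \tr(A\mu)$ and the $\frac{1}{4\pi}$ in $\mu$, the gradient $\nabla H(A)$ is one \emph{half} of the generator of $[e^{tA}x]$ (check on $\C\P^1$ with $A = E_{00}$, or with the off-diagonal $A$, where $\nabla H(A) = \tfrac12\del_x$ while the flow generator is $\del_x$), and then $|\tfrac12 W|^{2}$ against the metric $\frac{1}{\pi}\delta_{ij}$ reproduces the required $\frac{1}{4\pi}\sum_{\gamma\geq1}|A_{\gamma 0}|^{2}$. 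In other words, your two conventions are each off by a factor of two in opposite directions and cancel numerically; as written the argument verifies consistency rather than proving the lemma. To close it, fix once and for all the definitions of $\xi_A$ and $(\cdot,\cdot)_{\mathrm{FS}}$ in the form in which they are used in Proposition \ref{hessian-balancing-identity} (so that $\re(\xi_A,\xi_B)_{\mathrm{FS}}$ restricted tangentially is the Riemannian pairing of $\nabla H(A)$ and $\nabla H(B)$), and then redo your point computation at $[e_0]$ with those normalisations; the rest of your argument goes through unchanged.
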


This is Lemma 18 in \cite{fine} (to change to our current conventions, note that $H(A)$ and $H(B)$ must both be multiplied by $4\pi$ as must $(\cdot, \cdot)_\mathrm{FS}$ and $\mu$) and also appears in \cite{phong-sturm}.

\begin{proposition}
\label{hessian-balancing-identity}
Let $X^n \subset \C\P^m$ be a complex submanifold, $A, B \in i\u(m+1)$. The Hessian $\tr(A(P^*PB))$ of balancing energy at $X$ is given by:
\[
\re (\tr(AB \bar \mu) )
-
4\pi \int_X H(A)H(B) \frac{\omegaFS^n}{n!}
-
\int_X  (\diff H(A), \diff H(B) )_{\mathrm{FS}}\, \frac{\omegaFS^n}{n!}
\]
where $\omegaFS$ is the restriction of the Fubini--Study metric to $X$ and in the third term we first restrict $H(A),H(B)$ to $X$, then take the exterior derivative and finally compute the inner-product with the induced Riemannian metric on $X$.
\end{proposition}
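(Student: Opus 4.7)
The plan is to expand $\tr(A(P^{*}PB))$ as an $L^{2}$-inner-product of normal vector fields on $X$, split that inner-product using the Hermitian-orthogonal decomposition $T^{1,0}\C\P^{m}|_{X} = T^{1,0}X \oplus N$, and then evaluate the two summands using Lemma~\ref{pointwise-identity} and a short calculation on the tangential part.

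First, directly from the definitions of $P$ and of the adjoint inner-products,
\[
\tr\bigl(A(P^{*}PB)\bigr) \;=\; \langle PA, PB\rangle_{L^{2}(N)} \;=\; \int_{X}\re(\xi_{A}^{N},\xi_{B}^{N})_{\mathrm{FS}}\, \frac{\omegaFS^{n}}{n!},
\]
where $\xi_{A}^{N}$ denotes the normal component of $\xi_{A}|_{X}$. The splitting $T^{1,0}\C\P^{m}|_{X} = T^{1,0}X \oplus N$ is Hermitian-orthogonal because $X$ is a complex submanifold, so pointwise
\[
\re(\xi_{A}^{N},\xi_{B}^{N})_{\mathrm{FS}} \;=\; \re(\xi_{A},\xi_{B})_{\mathrm{FS}}\bigr|_{X} \;-\; \re(\xi_{A}^{T},\xi_{B}^{T})_{\mathrm{FS}}.
\]

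Second, I integrate the first summand. Taking real parts in Lemma~\ref{pointwise-identity} gives $4\pi H(A)H(B) + \re(\xi_{A},\xi_{B})_{\mathrm{FS}} = \re\tr(AB\mu)$ pointwise on $\C\P^{m}$. Integrating over $X$ and using $\bar\mu = \int_{X}\mu\,\omegaFS^{n}/n!$ yields
\[
\int_{X}\re(\xi_{A},\xi_{B})_{\mathrm{FS}}\,\frac{\omegaFS^{n}}{n!} \;=\; \re\tr(AB\bar\mu) \;-\; 4\pi\!\int_{X} H(A)H(B)\,\frac{\omegaFS^{n}}{n!},
\]
producing exactly the first two terms on the right-hand side of the Proposition.

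Third, I identify the tangential contribution with the gradient term. Writing $\xi_{A} = K_{A} - iJK_{A}$ for the real Killing field $K_{A}$ generated by $A$, the $J$-invariance of $TX$ forces tangential projection to commute with $J$, so $\xi_{A}^{T} = K_{A}^{T} - iJK_{A}^{T}$. On $\C\P^{m}$ one has the Hamiltonian identity $JK_{A} = -\mathrm{grad}_{\mathrm{FS}}H(A)$; since the tangential part of a gradient is the gradient of the restriction, $(JK_{A})^{T} = -\mathrm{grad}_{X}(H(A)|_{X})$. Unwinding the Hermitian Fubini--Study inner-product on $(1,0)$-vectors in terms of the underlying Riemannian metric, and using the compatibility $g(JK_{A}^{T},JK_{B}^{T}) = g(K_{A}^{T},K_{B}^{T})$, gives
\[
\re(\xi_{A}^{T},\xi_{B}^{T})_{\mathrm{FS}} \;=\; (\diff H(A),\diff H(B))_{\mathrm{FS}}
\]
pointwise on $X$. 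This yields the third term, with the minus sign inherited from the splitting above.

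The main obstacle is not conceptual but a matter of normalisation: one must ensure that the conventions for $\xi_{A}$, for the Hermitian Fubini--Study metric, for $H(A)$ (which carries the explicit $1/(4\pi)$ of~(\ref{mu})), and for the factor in Lemma~\ref{pointwise-identity} all fit together so that the coefficients $1$ and $4\pi$ in the statement emerge correctly. Once the conventions are pinned down consistently with Lemma~\ref{pointwise-identity}, no further input is required.
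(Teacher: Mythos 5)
Your proposal is correct and follows essentially the same route as the paper: express the Hessian as the $L^2$-pairing of the normal projections $\xi_A^\perp,\xi_B^\perp$, split via the orthogonal decomposition of $T\C\P^m|_X$, integrate Lemma~\ref{pointwise-identity} over $X$, and identify the tangential contribution with $(\diff H(A),\diff H(B))_{\mathrm{FS}}$ (the paper simply states $\xi_A^\top = \nabla H(A)$ where you spell out the Hamiltonian/Killing-field bookkeeping). The convention-matching you flag is exactly what the paper's citation of Lemma~\ref{pointwise-identity} is meant to settle, so no gap remains.
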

\begin{proof}
At all points of $X$ we can split $\xi_A = \xi_A^\top + \xi_A^\perp$ into tangential and orthogonal components. Recall that  the Hessian of balancing energy is
\[
\tr(A(P^*PB))
= 
\int_X \re(\xi_A^\perp, \xi_B^\perp)_{\mathrm{FS}}\,\frac{\omegaFS^n}{n!}.
\]
Meanwhile, $\xi_A^\top = \nabla H(A)$ where we have taken the gradient of $H(A)$ over $X$ using the metric induced via Fubini--Study. It follows that 
\[
\re(\xi_A^\top, \xi_B^\top)_{\mathrm{FS}}
= 
\left(\diff H(A), \diff H(B)\right)_\mathrm{FS}
\]
where on the right-hand-side we first restrict to $X$, then take the exterior derivative and compute the inner-product with the induced Riemannian metric on $X$. From here the result follows by integrating Lemma \ref{pointwise-identity} over $X$.
\end{proof}

Recall that we fix a positively curved Hermitian metric $h$ in $L \to X$ and consider the sequence of embeddings $X \to \C\P^{n_k}$ provided by $L^2$-orthonormal bases of $H^0(X, L^k)$. We will prove Theorem \ref{asymptotics-hessians} by applying Proposition \ref{hessian-balancing-identity} to $A = Q_{f,k}$, $B= Q_{g,k}$ and computing the asymptotics of each of the terms. In fact, it will be clear in the course of our proof that this gives an asymptotic expansion with coefficients which are also symmetric bilinear forms in $f$ and $g$. (Strictly speaking, to verify this one needs to know $q_{f,g,1}$ for $f \neq g$ which is given in \cite{ma-marinescu}.) From this it follows that it suffices to prove Theorem \ref{asymptotics-hessians} in the case $f=g$. We write the details in this symmetric case, since it makes the notation far less cumbersome.

In the following, a subscript $k$ indicates that the object is defined via the $k^\mathrm{th}$ projective embedding $X \to \C\P^{n_k}$. So $\omega_k$ is the restriction of the Fubini--Study form to $X$, $|\cdot|_k$ is the associated norm on tensors etc. On the other hand, the absence of a subscript indicates that the object is defined with respect to the original choice of Kähler metric $\omega$.

\begin{lemma}
\label{FS-volume-form}
The volume form of the metric $\omega_k$ has the asymptotic expansion:
\[
\omega_k^n 
= 
k^n \omega^n
\left(1 - \frac{1}{32\pi^2 k^2} \Delta S + O(k^{-3}) \right).
\]
\end{lemma}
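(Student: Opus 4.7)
The plan is to substitute the Bergman expansion of Theorem \ref{asymptotics-Bergman} directly into the formula relating $\omega_k$ and $\omega$ via $\rho_k$ recalled in \S\ref{asymptotic_results}, then take the $n$-th wedge power and use a standard pointwise Kähler identity to reduce everything to a scalar function. No new analytic ingredient is required; the result is extracted by bookkeeping.

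First I would write $\rho_k = k^n\bigl(1 + \tfrac{S}{8\pi k} + O(k^{-2})\bigr)$, take logarithms using $\log(1+x)=x+O(x^2)$, and observe that the constant-in-$z$ term $n\log k$ is annihilated by $\partial\bar\partial$. Since Theorem \ref{asymptotics-Bergman} gives the expansion in $C^\infty$, applying $\tfrac{i}{2\pi}\partial\bar\partial$ term-by-term is legitimate and produces an expansion of $\omega_k$ of the form
\[
\omega_k = k\omega + \tfrac{1}{k}\,\eta + O(k^{-2}),
\]
where $\eta$ is an explicit real $(1,1)$-form proportional to $i\partial\bar\partial S$ (with the precise constant $\pm\frac{1}{16\pi^2}$ fixed by the conventions of \S1.4 and the value $b_1 = S/(8\pi)$), and where the $O(k^{-2})$ remainder is a $(1,1)$-form whose $C^\infty$-norm is uniformly controlled.

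Second I would raise $\omega_k$ to the $n$-th power. Because $(1,1)$-forms commute, the usual binomial expansion applies, and collecting powers of $k$ yields
\[
\omega_k^n \;=\; k^n \omega^n \;+\; n k^{n-1}\,\omega^{n-1}\wedge \tfrac{1}{k}\eta \;+\; O(k^{n-3})\,\omega^n,
\]
since every further wedge power of $\tfrac{1}{k}\eta$ costs at least an additional factor of $k^{-2}$. Thus the only $k^{n-2}$ correction comes from a single term, namely $n k^{n-2}\,\omega^{n-1}\wedge \eta$, which is a multiple of $k^{n-2}\,\omega^{n-1}\wedge i\partial\bar\partial S$.

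Finally I would apply the standard pointwise Kähler identity
\[
n\,\omega^{n-1}\wedge i\partial\bar\partial f \;=\; -\tfrac{1}{2}(\Delta f)\,\omega^n,
\]
valid for any smooth real $f$ with $\Delta$ the positive $d$-Laplacian (verified in Kähler normal coordinates at a point, as in \S\ref{asymptotic_results}'s conventions). Applied with $f=S$, this turns the $k^{n-2}$ correction into a scalar multiple of $\Delta S\cdot \omega^n$; bookkeeping of the constants produces exactly $-\tfrac{1}{32\pi^2 k^2}\Delta S\cdot k^n\omega^n$, as claimed. The $O(k^{n-3})$ tail remains in $C^\infty$ thanks to the uniform $C^\infty$ control in Theorem \ref{asymptotics-Bergman}, and the uniformity in the metric is inherited from the same source via the convention of \S\ref{uniformity_metric}. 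The only genuinely delicate point is to keep the three sign conventions (the sign in $\omega_k = k\omega \pm \tfrac{i}{2\pi}\partial\bar\partial\log\rho_k$, the sign of $b_1$, and the sign of the Kähler identity) mutually consistent; once that is done, the computation collapses to a one-line substitution.
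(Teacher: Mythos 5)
Your proposal is correct and follows essentially the same route as the paper, which simply substitutes the Bergman expansion into $\omega_k = k\omega - \frac{i}{2\pi}\delb\del\log\rho_k$ and leaves the wedge-power and Laplacian bookkeeping implicit; your sign conventions (in particular $n\,\omega^{n-1}\wedge i\partial\bar\partial f = -\tfrac{1}{2}\Delta f\,\omega^n$ for the positive $\diff$-Laplacian) are consistent with the paper's $\Delta f\,\omega^n = 2ni\,\delb\del f\wedge\omega^{n-1}$, so the constant $-\tfrac{1}{32\pi^2}$ comes out as claimed.
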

\begin{proof}
This follows from Theorem \ref{asymptotics-Bergman} which implies that 
\[
\omega_k 
=
k \omega - \frac{i}{2\pi} \delb\del
\left(
\frac{S(\omega)}{8\pi k} + O(k^{-2})
\right).
\]
\end{proof}

\begin{lemma}
\label{asymptotics-trQ2mu}
$\tr(Q_{f,k}^2 \bar\mu_k)$ has the following asymptotic expansion:
\[
4\pi k^{n+2}\int_X 
f^2
\,\frac{\omega^n}{n!}
+
k^{n+1}\int_X 
|\diff f|^2\,
\frac{\omega^n}{n!}
-\frac{1}{4\pi}k^n\int_X |\D f|^2\,\frac{\omega^n}{n!}
+
O(k^{n-1}).
\]
\end{lemma}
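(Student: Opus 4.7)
The starting point is to rewrite the trace as a kernel integral. Since $\bar\mu_k = \int_X \mu\,\omega_k^n/n!$ and $\mu_{\alpha\beta}(x) = (s_\beta,s_\alpha)(x)/(4\pi\rho_k(x))$ in any $L^2$-orthonormal basis $\{s_\alpha\}$ of $H^0(X,L^k)$, a direct matrix computation using the identification $Q_{f,k} = T_{F,k}$ with $F = 4\pi k f + \Delta f$ gives
\[
\tr(Q_{f,k}^2\bar\mu_k) = \int_X \frac{K_{F,F,k}(x)}{4\pi\rho_k(x)}\,\frac{\omega_k^n}{n!},
\]
where $K_{F,F,k}$ is the Ma--Marinescu composition kernel of Theorem~\ref{mm-second-expansion} restricted to the diagonal. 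Bilinearity in the Toeplitz symbol then splits this as
\[
K_{F,F,k} = 16\pi^2 k^2 K_{f,f,k} + 4\pi k\bigl(K_{f,\Delta f,k}+K_{\Delta f,f,k}\bigr) + K_{\Delta f,\Delta f,k}.
\]

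I would feed in Ma--Marinescu's expansions for each kernel (Theorem~\ref{mm-second-expansion}, together with the explicit formula for $q_{f,f,2}$ given in~\cite{ma-marinescu} but not reproduced in the excerpt), the Bergman expansion (Theorem~\ref{asymptotics-Bergman}) for $\rho_k$, and Lemma~\ref{FS-volume-form} for $\omega_k^n$, observing that the cross coefficient $q_{f,\Delta f,1}+q_{\Delta f,f,1}$ is obtained by polarisation from the symmetric $q_{g,g,1}$. The $k^{n+2}$-coefficient of the integrand is immediately $4\pi f^2$. At order $k^{n+1}$, the $k^{n+1}$-coefficient of $K_{F,F,k}/(4\pi)$ works out to $\tfrac{1}{4\pi}(16\pi^2 q_{f,f,1}+8\pi f\Delta f) = \tfrac{Sf^2}{2}+|\diff f|^2$; adding the contribution $-\tfrac{Sf^2}{2}$ arising when the $-S/(8\pi k)$ correction in $\omega_k^n/\rho_k$ multiplies the leading $4\pi f^2 k^{n+2}$ leaves exactly $|\diff f|^2$, as desired.

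The $k^n$-coefficient is more delicate. Assembling the contributions from $16\pi^2 q_{f,f,2}$, from the polarised $q_{f,\Delta f,1}+q_{\Delta f,f,1}$, from the leading $(\Delta f)^2$ of $K_{\Delta f,\Delta f,k}$, and from the $1/k$ and $1/k^2$ corrections (in particular the term $S^2/(64\pi^2)-b_2-\Delta S/(32\pi^2)$ in $\omega_k^n/\rho_k$), the $k^n$-integrand becomes a polynomial in $f, \Delta f, \Delta^2 f, (\diff f,\diff\Delta f), S, \Ric, b_2$ and $\Delta S$. After integration over $X$, the $b_2 f^2$ contributions cancel in pairs, integration by parts cancels $-\tfrac{1}{2\pi}\int f\Delta^2 f$ against $+\tfrac{1}{2\pi}\int(\diff f,\diff\Delta f)$, and $\int f^2\Delta S$ reduces via integration by parts to a combination of $\int S|\diff f|^2$ and $\int f(\diff S,\diff f)$. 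The introduction's identity $2\D^*\D f = \Delta^2 f - 4(\Ric, i\delb\del f)+(\diff S,\diff f)$ gives $\int|\D f|^2 = \tfrac{1}{2}\int(\Delta f)^2 - 2\int f(\Ric, i\delb\del f)+\tfrac{1}{2}\int f(\diff S,\diff f)$, and the remaining combination is matched against $-\tfrac{1}{4\pi}\int|\D f|^2\,\omega^n/n!$. The main obstacle is precisely this final matching: a routine but delicate computation that hinges on the specific form of Ma--Marinescu's $q_{f,f,2}$ to reassemble, after all boundary terms are discarded, into the geometrically invariant quantity $|\D f|^2$. Uniformity in $f$ and in the K\"ahler metric follows directly from the corresponding uniformity in Theorems~\ref{asymptotics-Bergman} and~\ref{mm-second-expansion}.
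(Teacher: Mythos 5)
Your reduction is the same as the paper's: you rewrite $\tr(Q_{f,k}^2\bar\mu_k)$ as $\int_X K_{F,F,k}\,(4\pi\rho_k)^{-1}\,\omega_k^n/n!$ with $F=4\pi kf+\Delta f$, expand $\omega_k^n/\rho_k$ via Theorem \ref{asymptotics-Bergman} and Lemma \ref{FS-volume-form}, split $K_{F,F,k}$ by bilinearity, and match coefficients; your treatment of the $k^{n+2}$ and $k^{n+1}$ orders is exactly the paper's (and your polarisation argument for $q_{f,\Delta f,1}+q_{\Delta f,f,1}$ is legitimate). The one place you genuinely diverge is the $k^n$ coefficient, which is the heart of the lemma, and there your plan is weaker than what you could do: you defer to the explicit composition-kernel coefficient $q_{f,f,2}$ of \cite{ma-marinescu} (not reproduced in the paper) and then declare the reassembly into $-\frac{1}{4\pi}\int_X|\D f|^2\,\omega^n/n!$ a ``routine but delicate'' computation that you do not carry out --- so the decisive identity is asserted rather than verified. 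The paper closes exactly this step with a small trick you missed: since $\int_X K_{f,g,k}\,\omega^n/n! = \int_X f\,K_{g,k}\,\omega^n/n!$ (both compute $\tr(T_{f,k}T_{g,k})$), one has $\int_X q_{f,g,j} = \int_X f\,q_{g,j}$, so every composition-kernel coefficient appearing at order $k^n$ --- $q_{f,f,2}$, $q_{f,\Delta f,1}$, $q_{\Delta f,f,1}$ --- can be traded for the \emph{single}-Toeplitz coefficients $q_{g,1},q_{g,2}$ that are written out in Theorem \ref{mm-first-expansion}; after that, the integration-by-parts identities (\ref{integration-parts}) and the relation (\ref{principal-part}) produce $-\frac{1}{4\pi}\int_X|\D f|^2$ by a short explicit calculation. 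I recommend you either adopt that device (it keeps the proof self-contained and removes any dependence on the unreproduced $q_{f,f,2}$), or, if you keep your route, actually transcribe Ma--Marinescu's formula for $q_{f,g,2}$ and write out the cancellation of the $b_2$ terms and the reassembly into $|\D f|^2$, since as it stands that final matching is the one unproved step in your argument.
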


\begin{proof}
On restriction to $X \subset \C\P^m$, we can rewrite $\mu$ in terms of the Hermitian metric $h$ on $L$:
\[
\left(\mu|_X\right)|_{\alpha \beta}
 = 
\frac{(s_\alpha, s_\beta)}{4\pi \sum |s_\gamma|^2}
=
\frac{(s_\alpha, s_\beta)}{4\pi \rho_k},
\]
(where $(\cdot, \cdot)$ denotes the metric $h^k$ on $L^k$). It follows that 
\[
\tr (Q_{f,k}^2 \bar \mu_k)
=
\sum_{\alpha, \beta, \gamma} Q_{\alpha \beta} Q_{\beta \gamma} M_{\gamma \alpha},
\] 
where
\begin{eqnarray*}
Q_{\alpha \beta}
&=&
\int_X
\left(4\pi kf + \Delta f\right)(s_\alpha, s_\beta)\frac{\omega^n}{n!},
\\
M_{\gamma \alpha}
&=&
\int_X
\frac{(s_\gamma, s_\alpha)}{4\pi\rho_k}\frac{\omega_k^n}{n!}.
\end{eqnarray*}
By Lemma \ref{FS-volume-form} and the expansion of $\rho_k$ in Theorem \ref{asymptotics-Bergman},
\begin{eqnarray*}
\frac{\omega_k^n}{\rho_k}
&=&
\omega^n\left(
1 - \frac{S}{8\pi k} + \frac{S^2}{64\pi^2 k^2} - \frac{b_2}{k^2}
\right)
\left(
1 - \frac{1}{32\pi^2k^2}\Delta S
\right)
+
O(k^{-3}),\\
&=&
\omega^n\left(
1 - \frac{S}{8\pi k} 
+ 
\frac{S^2 - 64 \pi^2b_2 -2\Delta S}{64\pi^2 k^2} \right)
+ 
O(k^{-3}).
\end{eqnarray*}
Using this we have
\[
\tr (Q^2_{f,k} \bar\mu_k)
=
\frac{1}{4\pi}\int_X 
\left(
1 - \frac{S}{8\pi k} 
+ 
\frac{S^2 - 64 \pi^2b_2 -2\Delta S}{64\pi^2 k^2} + O(k^{-3})\right)
K_k\,
\frac{\omega^n}{n!}
\]
where 
\[
K_k 
= 
K_{4\pi kf + \Delta f, 4\pi kf + \Delta f,k}
\] 
is the restriction to the diagonal of the integral kernel of the composition of two Toeplitz operators, as discussed in \S\ref{asymptotic_results}. 

We now apply Theorem \ref{mm-second-expansion} to $K_k$. 
Taken literally, this result applies to $K_{f,g,k}$ for functions $f,g$ which are independent of $k$, whilst in our case the arguments depend linearly on $k$. However, since $K_{f,g,k}$ is in turn linear in $f$ and $g$ we can expand out and still use the asymptotic expansion. On doing this, the $k^{n+2}$-coefficient follows immediately. The $k^{n+1}$-coefficient is
\[
\int_X
\left(
2 f\Delta f + 4\pi q_{f,f,1} - \frac{1}{2} Sf^2
\right)\,\frac{\omega^n}{n!}
=
\int_X |\diff f|^2\,\frac{\omega^n}{n!},
\]
by virtue of Ma--Marinescu's formula for $q_{f,f,1}$. Finally, the $k^n$ coefficient is
\begin{multline*}
\int_X
\bigg(
\frac{1}{4\pi}(\Delta f)^2
+
q_{f,\Delta f,1} + q_{\Delta f, f,1}
+
4\pi q_{f,f,2}\\
-
\left(\frac{1}{4\pi}f \Delta f + \frac{1}{2} q_{f,f,1}\right)S
+
f^2\left(\frac{S^2 - 64\pi^2 b_2 - 2 \Delta S}{16\pi} \right)
\bigg)
\,\frac{\omega^n}{n!}.
\end{multline*}
Now we use the fact that $\int_X K_{f,g,k} = \int_X f K_{g,k}$. This tells us that $\int_X q_{f,g,j} = \int_X f q_{g,j}$. Applying the formulae in Theorems \ref{mm-first-expansion} and \ref{mm-second-expansion}, the above expression for the $k^n$ coefficient becomes
\[
\int_X
\left(
-\frac{1}{8\pi}(\Delta f)^2
+ 
\frac{1}{4\pi}f(\Ric, 2i\delb\del f)
+
\frac{1}{8\pi}Sf \Delta f
-
\frac{1}{8\pi} f^2 \Delta S
- 
\frac{1}{8\pi}S|\diff f|^2
\right)
\,\frac{\omega^n}{n!}.
\]
Next we use the following identities (which follow from integration by parts and Leibniz's rule)
\begin{equation}
\label{integration-parts}
\int_X f (\diff S, \diff f)\,\frac{\omega^n}{n!}
=
\frac{1}{2}
\int_X f^2 \Delta S\,\frac{\omega^n}{n!}
=
\int_X S \left(f \Delta f - |\diff f|^2\right)\,\frac{\omega^n}{n!}
\end{equation}
to write the $k^n$-coefficient as
\[
-\frac{1}{4\pi}\int_X |\D f|^2\,\frac{\omega^n}{n!},
\]
where we have used equation (\ref{principal-part}). 
\end{proof}

To compute the asymptotics of the two remaining terms in Proposition~\ref{hessian-balancing-identity} we first need an expansion for $H(Q_{f,k})$. 
\begin{lemma}
\label{asymptotics-HQ} 
There is an asymptotic expansion
\[
H(Q_{f,k})
=
kf - \frac{1}{32\pi^2k}D(f) + O(k^{-2}).
\]
where $D$ is the linearisation of the scalar curvature evaluated at $\omega$ (described in equation~(\ref{linearisation-scalar-curvature})). \end{lemma}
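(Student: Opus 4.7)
The plan is to identify $H(Q_{f,k})$ with a ratio of a Toeplitz kernel on the diagonal and the Bergman function. Combining the definition of $H$, the formula for $\mu$ restricted to $X$, and the matrix formula \eqref{Q-matrix}, one has
\[
H(Q_{f,k})(x)
=
\sum_{\alpha,\beta}\left(Q_{f,k}\right)_{\alpha\beta}\,\mu_{\beta\alpha}(x)
=
\frac{K_{4\pi k f + \Delta f,\,k}(x)}{4\pi\,\rho_k(x)},
\]
so the asymptotics of $H(Q_{f,k})$ are determined by applying Theorem~\ref{mm-first-expansion} (Ma--Marinescu) to the numerator and Theorem~\ref{asymptotics-Bergman} (Tian--Zelditch--Catlin--Lu) to the denominator, then dividing.

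The numerator is handled by using the linearity of $K_{\cdot,k}$ in the function argument, giving
\[
K_{4\pi k f + \Delta f,\,k}
=
4\pi k\bigl(f k^n + q_{f,1} k^{n-1} + q_{f,2} k^{n-2}\bigr)
+
\bigl(\Delta f\cdot k^n + q_{\Delta f,1} k^{n-1}\bigr)
+
O(k^{n-2}).
\]
Using Ma--Marinescu's explicit formulae, the $k^n$-term simplifies to $\tfrac{S}{2}f$ (the $\Delta f$ terms cancel), and the $k^{n-1}$-term works out, after a short manipulation using $D(f) = \Delta^2 f - 4(\Ric, i\delb\del f)$, to
\[
4\pi q_{f,2} + q_{\Delta f,1}
=
4\pi b_2\, f - \frac{1}{8\pi}D(f).
\]

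Dividing by $4\pi \rho_k = 4\pi k^n\bigl(1 + \tfrac{S}{8\pi k} + \tfrac{b_2}{k^2} + O(k^{-3})\bigr)$ and expanding as a geometric series, the leading order gives $k f$ as expected. At order $k^0$, the contribution $\tfrac{S}{8\pi}f$ from the numerator cancels exactly with $-f\cdot\tfrac{S}{8\pi}$ from the expansion of $1/\rho_k$; at order $k^{-1}$, the terms involving $S^2 f$ and $b_2 f$ cancel in pairs, leaving only $-\tfrac{1}{32\pi^2 k}D(f)$. This yields the stated formula.

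The hardest part is not any one individual calculation but the bookkeeping: to obtain the $k^{-1}$ term one must retain three terms in both expansions simultaneously and verify that all occurrences of $S^2$, $\Delta S$, $(\diff S,\diff f)$ and $b_2$ drop out, isolating the combination $\Delta^2 f - 4(\Ric, i\delb\del f) = D(f)$. I would simply write out the three-term expansions, multiply, and collect coefficients; no further analytic input beyond Theorems~\ref{asymptotics-Bergman} and~\ref{mm-first-expansion} and the convention~\eqref{linearisation-scalar-curvature} is needed. The uniformity in the K\"ahler metric then follows automatically from the analogous uniformity in those two theorems, as noted in~\S\ref{uniformity_metric}.
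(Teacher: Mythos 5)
Your proposal is correct and follows essentially the same route as the paper's rigorous argument: identify $H(Q_{f,k}) = \frac{1}{4\pi\rho_k}K_{4\pi kf+\Delta f,k}$, expand the numerator via Theorem~\ref{mm-first-expansion} (using linearity in the function argument to handle the $k$-dependence of $4\pi kf+\Delta f$), expand $\rho_k$ via Theorem~\ref{asymptotics-Bergman}, and multiply out, with exactly the cancellations you describe isolating $-\frac{1}{32\pi^2k}D(f)$. (A minor slip: the coefficients appearing here involve only $S$, $S^2$, $b_2$, $\Delta^2 f$, $S\Delta f$ and $(\Ric, i\delb\del f)$ --- no $\Delta S$ or $(\diff S,\diff f)$ terms arise in this particular computation --- but this does not affect the argument.)
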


\begin{remark}\label{deriv_FSHilb}
The map $f \mapsto k^{-1}H(Q_{f,k})$ is the derivative of $\FS_k \circ \Hilb_k$ at $h$. Theorem \ref{asymptotics-Bergman} tells us that $\FS_k\circ \Hilb_k(h)$ converges to $h$, whilst this Lemma implies that the derivative converges uniformly to the identity on compact sets in $T_h\H$.
\end{remark}

\begin{proof}
First, a heuristic argument. Write $h_k$ for the metric on $L^k$ obtained by pulling back the Fubini--Study metric via an $L^2$-orthonormal basis of $H^0(X,L^k)$. We know that
\[
h_k
=
\frac{h^k}{\rho_k}
=
\left(
1 
- 
\frac{S}{8\pi k} 
+ 
\frac{S^2 - 64\pi^2 b_2}{64\pi^2k^2} 
+ 
O(k^{-3})
\right)k^{-n}h^k.
\label{hk-approximation}
\]
Now we differentiate this equation term-by-term with respect to $h$. In other words, we consider a path $e^{4\pi t f}h$ of Hermitian metrics, giving a path of Fubini--Study approximations and take the derivative of $h_k = h^k/\rho_k$ with respect to $t$. The infinitesimal change in the Fubini--Study metric is $4 \pi H(Q_{f,k})h_k$ whilst the change in $h^k$ is $4 \pi kf h^k$. Differentiating term-by-term on the right hand side of equation (\ref{hk-approximation}) (and assuming this is permitted!) we get
\[
H(Q_{f,k}) h_k
=
\left(
k f 
- 
\frac{1}{8\pi}S f 
+
\frac{S^2f -64 \pi^2 b_2 f - 2 D(f)}{64\pi^2 k}
+ 
O(k^{-2}) \right)k^{-n} h^k.
\]
Now, since $\frac{h^k}{h_k}=\rho_k$, we have the following expression for $H(Q{f,_k})$, in which we have suppressed all terms which ultimately contribute at $O(k^{-2})$.
\[
\left(
k f 
- 
\frac{1}{8\pi}S f 
+
\frac{S^2f -64 \pi^2 b_2 f - 2 D(f)}{64\pi^2 k}
\right) 
\left(1 + \frac{S}{8\pi k} + \frac{b_2}{k^2}\right)
\]
Multiplying this out gives the result.

For a rigorous proof, we write out $H(Q_{f,k})$ using $K_{f,k}$.
\begin{eqnarray*}
H(Q_{f,k})(x)
	&=&
	\sum_{\alpha, \beta}
	\int_X
	\left(4\pi k f + \Delta f\right)(y)
	(s_\alpha, s_\beta)(y)
	(s_\beta, s_\alpha)(x)
	\frac{1}{4\pi\rho_k(x)}\frac{\omega^n_y}{n!},\\
	&=&
	\frac{1}{4\pi \rho_k} K_{4\pi kf + \Delta f,k}.
\end{eqnarray*}
This means that $H(Q_{f,k})$ is the following product, in which we have suppressed terms of sufficiently high order that they ultimately contribute at $O(k^{-2})$.
\[
\left(
1 - \frac{S}{8\pi k} + \frac{S^2 - 64\pi^2b_2}{64\pi^2k^2}
\right)
\left(
kf 
+
\frac{Sf}{8\pi} 
+ 
k^{-1}\left(
f b_2 - \frac{\Delta^2 f }{32\pi^2}+ \frac{1}{8\pi^2}(\Ric, i \delb \del f) 
\right)
\right)
\]
(Again, we have considered the expansion of $K_{g,k}$ where $g$ depends on $k$, but since this dependence is linear as is the dependence of $K_{g,k}$ on $g$ this is permitted.) Multiplying these together and using equation (\ref{linearisation-scalar-curvature}) for  $D(f)$ completes the proof of the expansion.
\end{proof}

\begin{lemma}
\label{asymptotics-L2-norm-H}
There is the following expansion:
\[
\int_X H(Q_{k,f})^2 \frac{\omega_k^n}{n!}
=
k^{n+2} \int_X f^2\, \frac{\omega^n}{n!}
-
\frac{1}{8\pi^2}k^n \int_X |\D f|^2 \,\frac{\omega^n}{n!}
+O(k^{n-1}).
\]
\end{lemma}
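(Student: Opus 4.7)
The plan is to substitute the asymptotic expansion of $H(Q_{f,k})$ from Lemma \ref{asymptotics-HQ} and the expansion of the volume form $\omega_k^n$ from Lemma \ref{FS-volume-form} into the left-hand side, multiply out, and integrate, tracking all contributions up to order $k^n$.

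First I would square the expansion
\[
H(Q_{f,k}) = kf - \frac{1}{32\pi^2 k} D(f) + O(k^{-2})
\]
to obtain $H(Q_{f,k})^2 = k^2 f^2 - \frac{1}{16\pi^2} f D(f) + O(k^{-1})$. Then I would combine this with $\omega_k^n/n! = k^n(1 - \frac{1}{32\pi^2 k^2}\Delta S + O(k^{-3}))\omega^n/n!$. Three terms contribute up to order $k^n$: the leading term $k^{n+2}f^2\,\omega^n/n!$, the cross term $-\frac{k^n}{32\pi^2}f^2 \Delta S\,\omega^n/n!$ arising from $k^2 f^2$ paired with the subleading piece of the volume form, and the term $-\frac{k^n}{16\pi^2} f D(f)\,\omega^n/n!$. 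All other pieces are $O(k^{n-1})$, and this bound is uniform in $f$ on $C^\infty$-compact sets because both input expansions enjoy the same uniformity.

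Next I would convert the $D$-term into a $\D^*\D$-term. Using equation~\eqref{principal-part} and the self-adjointness of $\D^*\D$, one has
\[
\int_X f D(f) \,\frac{\omega^n}{n!} = 2\int_X |\D f|^2\,\frac{\omega^n}{n!} - \int_X f(\diff S, \diff f)\,\frac{\omega^n}{n!},
\]
and by the integration-by-parts identities in \eqref{integration-parts}, the last integral equals $\tfrac{1}{2}\int_X f^2 \Delta S\,\omega^n/n!$. Substituting, the $\tfrac{k^n}{32\pi^2}\int f^2\Delta S$ terms produced by the volume-form correction and by the integration by parts cancel exactly, leaving precisely $-\frac{k^n}{8\pi^2}\int_X |\D f|^2\,\omega^n/n!$, as required.

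The only delicate point is this final cancellation: the coefficient of $f^2\Delta S$ in the expansion of $\omega_k^n$ is precisely what is needed to absorb the non-intrinsic $(\diff S, \diff f)$ contribution hidden inside $D(f)$. Once this is noticed the computation is essentially mechanical, and the uniformity in the metric claimed in \S\ref{uniformity_metric} follows automatically from the corresponding uniformity in Lemmas \ref{asymptotics-HQ} and \ref{FS-volume-form}.
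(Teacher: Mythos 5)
Your argument is correct and is essentially the paper's own proof: substitute the expansions of Lemmas \ref{asymptotics-HQ} and \ref{FS-volume-form}, identify the $k^{n+2}$ and $k^n$ coefficients, and rewrite the $k^n$ term $-\frac{1}{16\pi^2}\int_X\bigl(fD(f)+\tfrac12 f^2\Delta S\bigr)\frac{\omega^n}{n!}$ as $-\frac{1}{8\pi^2}\int_X|\D f|^2\frac{\omega^n}{n!}$ via equation (\ref{principal-part}) and the identities (\ref{integration-parts}). You simply make explicit the cancellation of the $f^2\Delta S$ contributions that the paper leaves implicit.
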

\begin{proof}
It follows from Lemmas \ref{FS-volume-form} and \ref{asymptotics-HQ} that $\int_X H(Q_{f,k})^2\,\frac{\omega_k^n}{n!}$ is equal to
\[
\int_X 
k^n\left(k^2 f^2 - \frac{1}{16\pi^2}fD(f) +O(k^{-1})\right)
\left(1 - \frac{1}{32\pi^2k^2}\Delta S + O(k^{-3})\right)
\frac{\omega^n}{n!}.
\]
The coefficient of $k^{n+2}$ is $\int_Xf^2\,\frac{\omega^n}{n!}$ and that of $k^{n+1}$ vanishes. The $k^n$ coefficient is
\[
- \frac{1}{16\pi^2}
\int_X \left(
fD(f) + \frac{1}{2} f^2 \Delta S\right) \,\frac{\omega^n}{n!}
\]
This is equal to $-\frac{1}{8\pi^2}\int_X|\D f|^2 \,\frac{\omega^n}{n!}$  by equation (\ref{principal-part}).
\end{proof}

\begin{lemma}
\label{asymptotics-L2-norm-dH}
There is the following expansion:
\[
\int_X |\diff H(Q_{k,f})|^2_k\frac{\omega_k^n}{n!}
=
k^{n+1}\int_X |\diff f|^2\, \frac{\omega^n}{n!}
+
O(k^{n-1}).
\]
\end{lemma}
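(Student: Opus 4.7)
The plan is to substitute the expansion of $H(Q_{f,k})$ from Lemma~\ref{asymptotics-HQ} into the integrand, convert the $\omega_k$-metric on cotangent vectors and the $\omega_k$-volume form back to their $\omega$-counterparts using Theorem~\ref{asymptotics-Bergman} and Lemma~\ref{FS-volume-form}, and then collect orders of $k$. The argument is structurally parallel to the proof of Lemma~\ref{asymptotics-L2-norm-H}, but is noticeably simpler because most intermediate coefficients are absorbed into the error term.

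First I would set $G_k = H(Q_{f,k})$ and use Lemma~\ref{asymptotics-HQ} to write $G_k = kf + k^{-1}g_1 + O(k^{-2})$ in $C^\infty$, with $g_1 = -\tfrac{1}{32\pi^2}D(f)$. Differentiating gives $\diff G_k = k\,\diff f + k^{-1}\diff g_1 + O(k^{-2})$, and squaring in the $\omega$-metric yields
\[
|\diff G_k|^2_\omega = k^2|\diff f|^2_\omega + 2(\diff f,\diff g_1)_\omega + O(k^{-1}).
\]
Next, Theorem~\ref{asymptotics-Bergman} provides $\omega_k = k\omega - \tfrac{i}{16\pi^2 k}\delb\del S(\omega) + O(k^{-2})$ in $C^\infty$, so $k^{-1}\omega_k = \omega + O(k^{-2})$. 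Dualising to cotangent vectors, $|\,\cdot\,|^2_k = k^{-1}|\,\cdot\,|^2_\omega\bigl(1 + O(k^{-2})\bigr)$. Combining these expansions gives
\[
|\diff G_k|^2_k = k|\diff f|^2_\omega + O(k^{-1}).
\]

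The crucial observation, and the only thing that really has to be checked, is that there is no $k^0$ term here: the cross-term $2(\diff f,\diff g_1)_\omega$ sits at order $k^0$ in $|\diff G_k|^2_\omega$, and after applying the factor $k^{-1}$ from dualising $\omega_k$ it ends up at order $k^{-1}$. Finally, Lemma~\ref{FS-volume-form} gives $\omega_k^n/n! = k^n\bigl(1 + O(k^{-2})\bigr)\omega^n/n!$, and multiplying pointwise yields
\[
|\diff G_k|^2_k\,\frac{\omega_k^n}{n!} = k^{n+1}|\diff f|^2_\omega\,\frac{\omega^n}{n!} + O(k^{n-1})\,\frac{\omega^n}{n!}.
\]
Integrating over $X$ gives the claim. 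The uniformity statements promised elsewhere in the article follow immediately because each asymptotic expansion invoked (those of Lemma~\ref{asymptotics-HQ}, Theorem~\ref{asymptotics-Bergman} and Lemma~\ref{FS-volume-form}) is itself uniform in $f$ and in the underlying K\"ahler metric under the stated compactness assumptions. There is no real obstacle; the only subtlety is the bookkeeping that shows the $k^n$ contribution vanishes, which as explained is automatic from the way the $\omega_k$-metric scales.
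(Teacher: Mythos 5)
Your argument is correct and follows essentially the same route as the paper: expand $H(Q_{f,k})$ via Lemma~\ref{asymptotics-HQ}, use $|\alpha|^2_k = k^{-1}|\alpha|^2_\omega + O(k^{-3})$ from $\omega_k = k\omega + O(k^{-1})$, and combine with Lemma~\ref{FS-volume-form}, the key point in both being that the expansion of $\diff H(Q_{f,k})$ jumps from order $k$ to order $k^{-1}$, so no $k^n$ term survives. Your explicit tracking of the cross-term $2(\diff f,\diff g_1)_\omega$ is just a slightly more detailed version of the paper's $|k\,\diff f + O(k^{-1})|^2$ bookkeeping.
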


\begin{proof}
First note that for a fixed covector $\alpha$,
\begin{eqnarray*}
|\alpha|^2_k
&=&
|\alpha|^2_{k\omega + O(k^{-1})},\\
&=&
k^{-1}|\alpha|^2_{\omega + O(k^{-2})},\\
&=&
k^{-1}|\alpha|^2 + O(k^{-3}).
\end{eqnarray*}
From here and Lemmas \ref{FS-volume-form} and \ref{asymptotics-HQ} we see that the quantity we are computing is:
\[
\int_X
k^{n-1}
\left|k\diff f + O(k^{-1})\right|^2\left(1 +O(k^{-2}) \right)
\, \frac{\omega^n}{n!}.
\]
This gives the claimed expansion.
\end{proof}

\begin{proof}[Proof of Theorem \ref{asymptotics-hessians}]
We put together Proposition \ref{hessian-balancing-identity} with Lemmas \ref{asymptotics-trQ2mu}, \ref{asymptotics-L2-norm-H} and \ref{asymptotics-L2-norm-dH}. This gives that
\begin{eqnarray*}
\tr (P_k(Q_{f,k})^2)
&=&
\tr(Q_{f,k}^2\bar\mu_k)
-
4\pi\int_XH(Q_{k,f})^2 \,\frac{\omega_k^n}{n!}
- 
\int_X\left|\diff H(Q_{f,k})\right|_k^2 \,\frac{\omega_k^n}{n!},\\
&=&
\frac{k^n}{4\pi} \int_X |\D f|^2 \, \frac{\omega^n}{n!} + O(k^{n-1}).
\end{eqnarray*}

Finally we must check the claimed uniformity of the expansion in both $f$ and the Kähler metric. This follows ultimately from the analogous uniformity of Theorems \ref{asymptotics-Bergman}, \ref{mm-first-expansion} and \ref{mm-second-expansion}.
\end{proof}

\section{Strategy of the proof of Theorems \ref{eigenvalue-theorem} and \ref{eigenvector-theorem}}
\label{strategy}

\subsection{Overview of the induction}
 
We will prove the Theorems \ref{eigenvalue-theorem} and \ref{eigenvector-theorem} in tandem, by using induction. Recall our notation that $\lambda_j$ denotes the $j^\mathrm{th}$ eigenvalue of $\D^*\D$, whilst $\nu_{j,k}$ denotes the $j^\mathrm{th}$ eigenvalue of $P^*_kP_k$ (where the eigenvalues are repeated according to their multiplicity). We assume that $\Aut(X, L)/\C^*$ is discrete. We make the following inductive hypotheses.

\begin{inductive-hypotheses}
Let $r$ be a non-negative integer. We call the following statements the \emph{$r^\text{th}$ inductive hypotheses}.
\begin{enumerate}
\item[I1.]
For each $j = 0, \ldots , r$
\[
\nu_{j,k} = \frac{\lambda_j}{64\pi^3 k^2}  + O(k^{-3}).
\]
\item[I2.]
Write $F_{r,k} \subset i\u(n_k+1)$ for the span of the $\nu_{j,k}$-eigenspaces with $j \leq r$. Then there exists $C$ such that for all $A, B \in F_{r,k}$,
\[
\left| \tr(AB) - 16\pi^2k^n\langle H(A), H(B) \rangle_{L^2(\omega)} \right|
\leq 
Ck^{-1}\tr(A^2)^{1/2}\tr(B^2)^{1/2}.
\]
\item[I3.]
Fix integers $0<p< q \leq r$ such that 
\[
\lambda_{p-1} < \lambda_p = \lambda_{p+1} = \cdots = \lambda_q < \lambda_{q+1}.
\]
Write $V_{p}$ for the $\lambda_p$-eigenspace of $\D^*\D$ and write $F_{p,q,k}$ for the span of the $\nu_{j,k}$-eigenspaces of $P^*_kP_k$ with $p\leq j \leq q$.

Given $\phi \in V_{p}$, let $A_{\phi,k}$ denote the point of $F_{p,q,k}$ with $H(A_{\phi, k})$ nearest to $\phi$, as measured in $L^2$. Then
\[
\left\|
H(A_{\phi,k}) - \phi
\right\|^2_{L^2_2(\omega)}
=
O(k^{-1})
\]
and this estimate is uniform in $\phi$ if we require in addition that $\|\phi\|_{L^2} = 1$. 
\end{enumerate}
\end{inductive-hypotheses}

To prove Theorems \ref{eigenvalue-theorem} and \ref{eigenvector-theorem} we will induct on the \emph{spectral gaps} of $\D^*\D$. More precisely, suppose that $\lambda_r < \lambda_{r+1} = \cdots = \lambda_{s} < \lambda_{s+1}$. To carry out the induction we will prove that the $r^\text{th}$-inductive hypotheses implies the $s^\text{th}$-inductive hypotheses.

We next outline the main idea behind the proofs of I1--3. Let $E_r$ denote the span of the $\lambda_j$-eigenspaces for $j = 0, \ldots, r$. and write $F_{r,k}$ for the span of the $\nu_{j,k}$-eigenspaces for $j = 0, \ldots r$. Assuming that $\lambda_{r+1} > \lambda_r$ and $\nu_{r+1,k} > \nu_{r,k}$, the variational characterisation of eigenvalues gives that
\begin{eqnarray*}
\lambda_{r+1}
	&=&
		\min_{f \in E_r^\perp}
		\frac{\|\D f\|^2}{\|f\|^2},\\
\nu_{r+1,k} 
	&=& 
		\min_{B \in F_{r,k}^\perp}
		\frac{\| P(B) \|^2}{\tr (B^2)}.
\end{eqnarray*}
An important part of our proofs is to establish that if $f$ is orthogonal to $E_r$ then to highest order $Q_{f,k}$ is orthogonal to $F_{r,k}$. Similarly if $B$ is orthogonal to $F_{r,k}$ then to highest order $H(B)$ is orthogonal to $E_r$. In other words, under the derivatives of $\Hilb_k$ and $\FS_k$ the orthogonal complements asymptotically match up. Now, Theorem \ref{asymptotics-hessians} shows that $\|\D f \|$ can be used to control $\| P_k(Q_{f,k})\|$. Meanwhile, Proposition \ref{P*P-controls-D} below shows how $\| P_k(B)\|$ can be used to control $\|\D H(B) \|$. Since $E_r^\perp$ and $F_{r,k}^\perp$ are matching up, these results enable us to control $\nu_{r,k}$ in terms of~$\lambda_r$.

\subsection{The base of the induction}

\begin{lemma}\label{base}
Suppose that $\Aut(X,L)/\C^*$ is discrete. Then the $0^\text{th}$ inductive hypotheses are true.
\end{lemma}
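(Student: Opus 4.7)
The plan is to observe that when $r = 0$ each of I1--I3 reduces to a direct computation once one identifies the kernels of $\D^*\D$ and $P_k^*P_k$. The discreteness of $\Aut(X,L)/\C^*$ forces $\ker \D^*\D = \R$ (by the argument already used in the proof of Corollary~\ref{phong-sturm_sharp}) and, I claim, $\ker P_k^*P_k = \R \cdot I$ for all sufficiently large $k$. Granting this, $\lambda_0 = 0 = \nu_{0,k}$ and $F_{0,k} = \R \cdot I$.

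For the identification of $\ker P_k$: by definition $A \in \ker P_k$ iff $\xi_A$ is tangent to $X \subset \C\P^{n_k}$, in which case $\xi_A|_X$ is a Hamiltonian Killing holomorphic vector field on $X$ with Hamiltonian $H(A)|_X$. The discreteness hypothesis, together with the fact that the central $\C^*$ acts trivially on $X$, forces every such field to vanish. Linear non-degeneracy of the Kodaira embedding then implies that the $\U(n_k+1)$-action generated by $iA$ must be scalar, i.e.\ $A \in \R \cdot I$.

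Once the kernels are in hand, I1 at $j=0$ reduces to $0=0$, and I3 is vacuous (no integers satisfy $0 < p < q \leq 0$). For I2 one takes $A = aI$, $B = bI$ with $a, b \in \R$. Using (\ref{mu}) one computes $\tr(\mu) = 1/(4\pi)$ pointwise on $\C\P^{n_k}$, hence $H(aI) = a/(4\pi)$, so
\[
16\pi^2 k^n \langle H(A), H(B)\rangle_{L^2(\omega)} = k^n ab \int_X \frac{\omega^n}{n!}.
\]
On the other hand $\tr(AB) = (n_k+1) ab$, and integrating the expansion of $\rho_k$ in Theorem~\ref{asymptotics-Bergman} over $X$ gives $n_k + 1 = k^n \int_X \omega^n/n! + O(k^{n-1})$. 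Subtracting,
\[
\bigl|\tr(AB) - 16\pi^2 k^n \langle H(A), H(B)\rangle_{L^2(\omega)}\bigr| \leq C k^{n-1} |ab|,
\]
while $\tr(A^2)^{1/2}\tr(B^2)^{1/2} = (n_k+1)|ab| \asymp k^n |ab|$, so after division one obtains I2 with a suitable constant. There is no real obstacle here: on the one-dimensional kernel the base case simply reduces to the Bergman/Riemann--Roch asymptotic for $n_k+1$, which is the cleanest instance of the general principle driving the induction --- that $H$ intertwines the Killing form on $i\u(n_k+1)$ with the $L^2$-pairing of Hamiltonians to leading order.
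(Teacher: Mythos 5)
Your proposal is correct and follows essentially the same route as the paper: identify $\ker\D^*\D=\R$ and $\ker P_k^*P_k=\R\cdot\mathrm{id}$ so that I1 is trivial and I3 is vacuous, then verify I2 on multiples of the identity using $H(\mathrm{id})=\frac{1}{4\pi}$ and the asymptotic $n_k+1=k^n\int_X\frac{\omega^n}{n!}+O(k^{n-1})$. The only difference is that you spell out why the zero-eigenspace of $P_k^*P_k$ is exactly $\R\cdot\mathrm{id}$ (via tangency of $\xi_A$ and linear nondegeneracy), a point the paper simply asserts.
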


\begin{proof}
Observe that $\ker \D^*\D = \ker \D$ is those functions whose Hamiltonian vector field is holomorphic. Such a vector field lifts to $L$ and so the assumption that $\Aut(X,L)/\C^*$ is discrete ensures that $\ker \D^*\D = \R$ and $\lambda_0 = 0$ is strictly smaller than $\lambda_1$. By definition, $P^*_kP_k$ vanishes on multiples of the identity, so $\nu_{0,k} = 0$ and so the first part of the base step is trivially satisfied.

Since $\mathrm{id}_k$ spans the $\nu_{0,k}$-eigenspace it suffices to prove the inequality in I2 just for this sequence of matrices. Now $H(\mathrm{id}_k) = \frac{1}{4\pi}$ is constant whilst $\tr(\mathrm{id}_k^2) = n_k+1 = Vk^n + O(k^{n-1})$ where $V= c_1(L)^n$ is the volume. So the left-hand-side is $O(k^{n-1})$ whilst the right-hand-side is $Vk^{n-1} + O(k^{n-2})$ and so the required constant $C$ can be found.

Finally, for I3, $H$ maps the $\nu_{0,k}$-eigenspace isomorphically onto the $\lambda_0$-eigen\-space, so there is no estimate required. 
\end{proof}

\section{A collection of estimates}
\label{estimates}

We now pause to collect various useful estimates. In reading the proofs of these estimates, remember that we habitually abuse notation in that $C$ denotes a constant which may change from line to line.

The first estimate in our collection is Lemma 15 in \cite{fine} (note that article does not use the factor of $\frac{1}{4\pi}$ in the definition of $\mu$). 

\begin{lemma}
$\| \bar\mu_k - \frac{1}{4\pi} \mathrm{id}_k\|_{\mathrm{op}} = O(k^{-1})$, where $\mathrm{id}_k \in i\u(n_k+1)$ is the identity matrix.
\end{lemma}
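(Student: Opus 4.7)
The plan is to identify $\bar\mu_k$ with a Toeplitz operator and then invoke the standard sup-norm bound $\|T_{f,k}\|_{\mathrm{op}}\leq \|f\|_{C^0}$. Restricted to $X\subset \C\P^{n_k}$, the components of $\mu$ in the $L^2$-orthonormal basis $\{s_\alpha\}$ of $H^0(X,L^k)$ are $\mu_{\alpha\beta}=(s_\alpha,s_\beta)/(4\pi\rho_k)$, as already observed in the proof of Lemma~\ref{asymptotics-trQ2mu}. Integrating against the Fubini--Study volume form gives
\[
(\bar\mu_k)_{\alpha\beta}=\int_X \phi_k\,(s_\alpha,s_\beta)\,\frac{\omega^n}{n!},\qquad \phi_k:=\frac{1}{4\pi\rho_k}\cdot\frac{\omega_k^n}{\omega^n}.
\]
By comparison with the matrix formula $(T_{g,k})_{\alpha\beta}=\int_X g(s_\alpha,s_\beta)\omega^n/n!$ used in equation \eqref{Q-matrix}, this is exactly the matrix of the Toeplitz operator with symbol $\phi_k$; that is, $\bar\mu_k=T_{\phi_k,k}$.

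The next step is to expand $\phi_k$. Combining Theorem \ref{asymptotics-Bergman}, which yields $\rho_k=k^n\big(1+\tfrac{S}{8\pi k}+O(k^{-2})\big)$ in $C^\infty$, with Lemma~\ref{FS-volume-form}, which gives $\omega_k^n/\omega^n=k^n\big(1+O(k^{-2})\big)$, I get
\[
\phi_k=\frac{1}{4\pi}-\frac{S}{32\pi^{2}k}+O(k^{-2}),
\]
so writing $\phi_k=\tfrac{1}{4\pi}+\psi_k$ we have $\|\psi_k\|_{C^0}=O(k^{-1})$ (uniformly, thanks to the uniformity statement in Theorem~\ref{asymptotics-Bergman}). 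Linearity of the Toeplitz assignment together with the trivial identity $T_{1,k}=\mathrm{id}_k$ then produces
\[
\bar\mu_k-\tfrac{1}{4\pi}\mathrm{id}_k=T_{\psi_k,k}.
\]

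Finally, I would apply the elementary operator-norm estimate $\|T_{\psi_k,k}\|_{\mathrm{op}}\leq \|\psi_k\|_{C^0}$: for $v\in\C^{n_k+1}$ write $s_v=\sum_\alpha v_\alpha s_\alpha\in H^0(X,L^k)$; then
\[
\langle T_{\psi_k,k}v,v\rangle=\int_X \psi_k|s_v|^2\,\frac{\omega^n}{n!},
\qquad
|\langle T_{\psi_k,k}v,v\rangle|\leq \|\psi_k\|_{C^0}\|s_v\|_{L^2}^2=\|\psi_k\|_{C^0}\|v\|^2,
\]
and since $T_{\psi_k,k}$ is Hermitian this bounds its operator norm. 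Putting everything together yields the claimed $O(k^{-1})$ estimate. There is no real obstacle here; the only observation worth flagging is the Toeplitz identification in the first paragraph, which is what allows one to upgrade the pointwise/entrywise asymptotics of the Bergman function and Fubini--Study volume to a genuine operator-norm bound.
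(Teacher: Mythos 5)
Your argument is correct, and it is essentially a reconstruction of the proof that the paper does not include: for this lemma the paper simply cites Lemma 15 of \cite{fine}, so there is no internal proof to compare against. Your route --- writing $(\bar\mu_k)_{\alpha\beta}=\int_X \phi_k\,(s_\alpha,s_\beta)\,\omega^n/n!$ with $\phi_k=\tfrac{1}{4\pi\rho_k}\,\omega_k^n/\omega^n$, recognising this as a Toeplitz operator, expanding $\phi_k=\tfrac{1}{4\pi}-\tfrac{S}{32\pi^2 k}+O(k^{-2})$ via Theorem \ref{asymptotics-Bergman} and Lemma \ref{FS-volume-form}, and then using $\|T_{\psi,k}\|_{\mathrm{op}}\leq\|\psi\|_{C^0}$ for the Hermitian operator $T_{\psi_k,k}$ --- is exactly the natural mechanism behind the cited result, and each step checks out: the identification of $\mu|_X$ matches the formula used in the proof of Lemma \ref{asymptotics-trQ2mu}, $T_{1,k}=\mathrm{id}_k$ because the basis is $L^2$-orthonormal, and the quadratic-form bound does give the operator norm since the operator is self-adjoint. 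Two minor remarks: depending on whether one reads $(s_\alpha,s_\beta)$ as conjugate-linear in the first or second slot, your $\bar\mu_k$ may be the transpose (equivalently conjugate) of $T_{\phi_k,k}$ rather than $T_{\phi_k,k}$ itself, but this changes neither Hermiticity nor the operator norm; and you only need $\omega_k^n/\omega^n=k^n(1+O(k^{-1}))$, so the full strength of Lemma \ref{FS-volume-form} is not required --- the crude consequence of Theorem \ref{asymptotics-Bergman} already suffices, which also keeps the uniformity in the metric transparent.
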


\begin{lemma}\label{bound-trABmu}
There is a constant $C$ such that for all $A, B \in i\u(n_k+1)$,
\[
\left| \tr(AB\bar\mu_k) - \frac{1}{4\pi} \tr(AB) \right |
\leq
C k^{-1} \tr(A^2)^{1/2}\tr(B^2)^{1/2}
\]
\end{lemma}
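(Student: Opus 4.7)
The plan is to reduce the statement to the previous lemma via the identity
\[
\tr(AB\bar\mu_k) - \frac{1}{4\pi}\tr(AB) = \tr\bigl(AB\,M_k\bigr), \qquad M_k := \bar\mu_k - \tfrac{1}{4\pi}\mathrm{id}_k,
\]
and then estimate the right hand side using operator norm control on $M_k$ combined with a Schatten/trace-norm bound on $AB$.

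More concretely, the previous lemma gives $\|M_k\|_{\mathrm{op}} \leq C'k^{-1}$. To convert this into the required bound we use the standard trace duality $|\tr(XY)| \leq \|X\|_1 \|Y\|_{\mathrm{op}}$ together with the Schatten H\"older inequality $\|AB\|_1 \leq \|A\|_2 \|B\|_2$, where $\|\cdot\|_p$ denotes the Schatten $p$-norm on $i\u(n_k+1)$. For Hermitian matrices $\|A\|_2 = \tr(A^2)^{1/2}$, so altogether
\[
\left|\tr(AB\,M_k)\right|
\leq \|AB\|_1 \, \|M_k\|_{\mathrm{op}}
\leq \tr(A^2)^{1/2}\tr(B^2)^{1/2}\|M_k\|_{\mathrm{op}},
\]
which yields the desired inequality with $C = C'$.

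There is no real obstacle here; the only point to be careful about is invoking the correct trace inequality rather than trying something like $|\tr(ABM_k)| \leq \|M_k\|_2 \|AB\|_2$, which would only give a bound of order $k^{n/2-1}$ in the relevant directions and would not be uniform in $A,B$. Using $\|\cdot\|_{\mathrm{op}}$ paired with $\|\cdot\|_1$ is essential, as is the cyclic symmetry of the trace which lets one arrange $M_k$ into the $\mathrm{op}$-norm slot regardless of the order of $A$ and $B$.
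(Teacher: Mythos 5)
Your proof is correct and follows essentially the same route as the paper: both reduce to the bound $\|\bar\mu_k - \tfrac{1}{4\pi}\mathrm{id}_k\|_{\mathrm{op}} = O(k^{-1})$ from the preceding lemma together with the inequality $|\tr(ABG)| \leq \tr(A^2)^{1/2}\tr(B^2)^{1/2}\|G\|_{\mathrm{op}}$. The only cosmetic difference is that the paper justifies this trace inequality by diagonalising $G$, whereas you invoke trace duality and the Schatten H\"older inequality, which amounts to the same estimate.
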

\begin{proof}
This follows from the bound $\|\bar\mu_k - \frac{1}{4\pi} \mathrm{id_k}\|_{\mathrm{op}} = O(k^{-1})$ and the standard fact that for Hermitian matrices $A, B,G$:
\[
\left|\tr(ABG)\right| \leq \tr(A^2)^{1/2}\tr(B^2)^{1/2}\|G\|_{\mathrm{op}}.
\]
(which can be proved, for example, by writing out the matrices in a basis for which $G$ is diagonal).
\end{proof}

\begin{lemma}
\label{L2k-L2-equivalence}
There exists a constant $C$ such that for any pair of functions $f, g, \in L^2$, 
\[
\left | 
k^{-n} \langle f, g \rangle_{L^2(\omega_k)} 
-
 \langle f, g \rangle_{L^2(\omega)}
\right|
\leq
C k^{-2} \| f\|_{L^2(\omega)}\| g\|_{L^2(\omega)}.
\]
Similarly for any pair of $L^2$-sections $s, t$ of $\,\Lambda^{0,1}\otimes TX$, 
\[
\left | 
k^{-n} \langle s, t \rangle_{L^2(\omega_k)} 
-
 \langle s, t \rangle_{L^2(\omega)}
\right|
\leq
C k^{-2} \| s\|_{L^2(\omega)}\| t\|_{L^2(\omega)}.
\]
\end{lemma}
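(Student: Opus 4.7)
The plan is to reduce both estimates to the pointwise expansion $k^{-n}\omega_k^n = \omega^n(1 + O(k^{-2}))$ recorded in Lemma \ref{FS-volume-form}, together with a parallel pointwise comparison of the inner products induced on $\Lambda^{0,1}\otimes TX$ by $\omega_k$ and $\omega$. In both cases a single Cauchy--Schwarz step then delivers the claim.

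For the first inequality, the integrand $fg$ carries no metric dependence, so only the volume form enters. Writing $k^{-n}\omega_k^n = \omega^n(1 + \eta_k)$ with $\|\eta_k\|_{C^0} = O(k^{-2})$ and substituting into the definition of the $L^2(\omega_k)$ inner product,
\[
\bigl|k^{-n}\langle f,g\rangle_{L^2(\omega_k)} - \langle f,g\rangle_{L^2(\omega)}\bigr|
= \left|\int_X fg\,\eta_k \frac{\omega^n}{n!}\right|
\leq \|\eta_k\|_{C^0}\,\|f\|_{L^2(\omega)}\|g\|_{L^2(\omega)}
\]
by Cauchy--Schwarz, which gives the stated bound.

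For the second inequality, I would first observe that the pointwise Hermitian inner product on $\Lambda^{0,1}\otimes TX$ is invariant under the global rescaling $\omega \mapsto k\omega$: one factor uses the cotangent metric (scaling as $k^{-1}$) and the other the tangent metric (scaling as $k$), and these cancel exactly. Now Theorem \ref{asymptotics-Bergman} gives $\omega_k = k\omega + O(k^{-1})$ in $C^\infty$, so $k^{-1}\omega_k = \omega + O(k^{-2})$ as tensors; hence the two pointwise inner products on $\Lambda^{0,1}\otimes TX$ agree up to a factor $1+\eta'_k$ with $\|\eta'_k\|_{C^0}=O(k^{-2})$, i.e.\ $(s,t)_{\omega_k} = (s,t)_\omega(1+\eta'_k)$. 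Multiplying by the volume form expansion yields a single $C^0$ error $\tilde\eta_k = \eta_k + \eta'_k + \eta_k\eta'_k$ of size $O(k^{-2})$ with
\[
k^{-n}(s,t)_{\omega_k}\,\omega_k^n = (s,t)_\omega\,\omega^n\,(1 + \tilde\eta_k),
\]
and a second application of Cauchy--Schwarz (using $|(s,t)_\omega|\leq |s|_\omega|t|_\omega$ pointwise) delivers the desired estimate.

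The only delicate point, and the one meriting explicit checking, is the absence of a $k^{-1}$ contribution to either comparison. The cancellation in the volume form is already packaged in Lemma \ref{FS-volume-form}; the cancellation in the bundle metric is the algebraic observation that dividing the estimate $\omega_k - k\omega = O(k^{-1})$ by the extra factor of $k$ needed to form the ratio $k^{-1}\omega_k$ gives $O(k^{-2})$, so no $k^{-1}$ term appears. Beyond this, the argument is a routine Cauchy--Schwarz estimate applied twice, and there is no substantive analytic obstacle.
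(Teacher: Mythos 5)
Your argument is correct and is essentially the paper's own proof: both rest on $\omega_k = k\omega + O(k^{-1})$ (so that the relative error in the volume form and in the rescaled metric is $O(k^{-2})$), on the observation that rescaling the metric by a constant leaves the induced metric on $\Lambda^{0,1}\otimes TX$ unchanged, and on a single Cauchy--Schwarz step. The only cosmetic point is that the pointwise comparison of the bundle metrics is better stated as $\left|(s,t)_{\omega_k}-(s,t)_\omega\right|\leq Ck^{-2}|s|_\omega |t|_\omega$ rather than as a multiplicative factor $1+\eta'_k$ (which is not literally meaningful for two different sections), but this is exactly the form in which you use it, so the proof stands.
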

\begin{proof} 
Since $\omega_k= k \omega + O(k^{-1})$ given any pair of functions $f,g$, we can write
\[
 \langle f, g \rangle_{L^2(\omega)}
=
k^n \langle f, g \rangle_{L^2(\omega_k)}
+
 \int_X f g \epsilon_k \, \frac{\omega^n}{n!}
\]
where $\epsilon_k =O(k^{n-2})$ in, say, $C^0$. By Cauchy--Schwarz,
\[
\left| \int_X fg \epsilon_k \, \frac{\omega^n}{n!} \right|
\leq
C k^{n-2} \| f \|_{L^2(\omega)} \| g\|_{L^2(\omega)}
\]
For some $C$ independent of $f, g$.

The proof for sections of $\Lambda^{0,1} \otimes TX$ is identical, once we have observed that scaling the metric by a constant does not change the metric on \hbox{$\Lambda^{0,1} \otimes TX$}.
\end{proof}

\begin{lemma}
\label{tr-controls-L2H}
There exists a constant $C$ such that for all $A \in i\u(n_k+1)$, 
\[
\| H(A) \|^2_{L^2(\omega)}
 \leq 
\left(\frac{1}{16\pi^2}k^{-n}+ Ck^{-n-1} \right) \tr(A^2).
\]
\end{lemma}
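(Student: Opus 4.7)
The plan is to reduce the estimate to integration of the pointwise identity in Lemma \ref{pointwise-identity}, then combine the bounds already assembled in this section.

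First I would set $B = A$ in Lemma \ref{pointwise-identity}, which yields the pointwise identity $4\pi H(A)^2 + |\xi_A|^2_{\mathrm{FS}} = \tr(A^2 \mu)$ on $\C\P^{n_k}$, hence in particular on $X$. Integrating over $X$ against $\omega_k^n/n!$ and discarding the non-negative contribution of $|\xi_A|^2_{\mathrm{FS}}$ gives
\[
4\pi \int_X H(A)^2 \, \frac{\omega_k^n}{n!} \;\leq\; \int_X \tr(A^2 \mu)\, \frac{\omega_k^n}{n!} \;=\; \tr(A^2 \bar{\mu}_k).
\]
Next I would apply Lemma \ref{bound-trABmu} with $B = A$ to get $\tr(A^2 \bar\mu_k) \leq \tfrac{1}{4\pi}\tr(A^2) + Ck^{-1}\tr(A^2)$, so that
\[
\|H(A)\|_{L^2(\omega_k)}^2 \;\leq\; \frac{1}{16\pi^2}\tr(A^2) + Ck^{-1}\tr(A^2).
\]

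Finally, I would invoke Lemma \ref{L2k-L2-equivalence} with $f = g = H(A)$ to pass from the norm computed against $\omega_k$ to the norm computed against $\omega$. Rearranging that lemma shows that for all sufficiently large $k$ one has $\|H(A)\|_{L^2(\omega)}^2 \leq k^{-n}(1 + Ck^{-2})\|H(A)\|_{L^2(\omega_k)}^2$. Substituting the previous bound and absorbing cross terms into the $Ck^{-n-1}\tr(A^2)$ remainder then gives the desired estimate.

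There is no serious obstacle: all three ingredients are already in hand. The only care needed is in bookkeeping of constants, so that the leading coefficient emerges as exactly $\frac{1}{16\pi^2}$ and every error term is controlled by $k^{-n-1}\tr(A^2)$ with a constant independent of $A$ and (by the observation in \S\ref{uniformity_metric}) uniform over compact families of K\"ahler metrics.
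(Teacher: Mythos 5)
Your proposal is correct and follows essentially the same route as the paper: drop the non-negative $|\xi_A|^2_{\mathrm{FS}}$ term in Lemma \ref{pointwise-identity} with $B=A$, integrate against $\omega_k^n/n!$ to get $4\pi\|H(A)\|^2_{L^2(\omega_k)} \leq \tr(A^2\bar\mu_k)$, control this via Lemma \ref{bound-trABmu}, and convert to the $L^2(\omega)$-norm with Lemma \ref{L2k-L2-equivalence}. The bookkeeping works exactly as you indicate, with the cross terms absorbed into $Ck^{-n-1}\tr(A^2)$.
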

\begin{proof}
Lemma \ref{pointwise-identity} implies $4\pi H(A)^2 \leq \tr(A^2 \mu)$. Integrating over $X$ with respect to $\omega_k$ and applying Lemma \ref{bound-trABmu} with $B=A$ proves that
\[
\| H(A) \|^2_{L^2(\omega_k)} 
\leq 
\left(\frac{1}{16\pi^2}+ Ck^{-1} \right) \tr(A^2)
\]
Now it follows from Lemma \ref{L2k-L2-equivalence} that there is a constant $C$ such that for any function~$f$, 
\[
\| f \|^2_{L^2(\omega)} \leq \left(k^{-n} + Ck^{-n-2} \right)\| f \|^2_{L^2(\omega_k)}.
\]
Applying this to $H(A)$ completes the proof.
\end{proof}

\begin{lemma}
\label{Dk-D-control}
There exists a constant $C$ such that for all $f \in L^2_2$, 
\[
\left\| (k \D_k - \D)(f)\right \|_{L^2(\omega)} \leq Ck^{-2} \| f \|_{L^2_2(\omega)}
\]
where $\D_k$ is the operator defined by the metric $\omega_k$, whilst $\D$ is defined by $\omega$.
\end{lemma}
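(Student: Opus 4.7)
The plan is to express $k\D_k - \D$ as $\bar\partial$ of a vector field built linearly from $df$ with coefficients of size $O(k^{-2})$ in $C^\infty$, and then use that the $L^2(\omega)$-norm of a second-order differential operator applied to $f$ with uniformly small coefficients is controlled by $\|f\|_{L^2_2(\omega)}$.

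First, I would unpack the definitions: $\D f = \bar\partial v_f$ where $v_f$ is determined by $\iota_{v_f}\omega = df$, and similarly $\D_k f = \bar\partial v_{f,k}$ with $\iota_{v_{f,k}}\omega_k = df$. The Dolbeault operator $\bar\partial$ depends only on the complex structure, not on the metric, so the whole difference $k\D_k - \D$ can be written as $\bar\partial(kv_{f,k} - v_f)$. The task is therefore to understand the real vector field $w_k := kv_{f,k} - v_f$.

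Next, using Theorem~\ref{asymptotics-Bergman} together with $\omega_k = k\omega - \tfrac{i}{2\pi}\delb\del\log \rho_k$, I would write $\omega_k = k\omega + \eta_k$ with $\eta_k = -\tfrac{i}{2\pi}\delb\del \log(k^{-n}\rho_k) = O(k^{-1})$ in $C^\infty$ (uniformly in the sense of \S\ref{uniformity_metric}). Viewing $\omega$ and $\omega_k$ as bundle isomorphisms $TX \to T^*X$, a geometric series expansion gives
\[
\omega_k^{-1} = \tfrac{1}{k}\omega^{-1}\bigl(1 + \tfrac{1}{k}\eta_k \omega^{-1}\bigr)^{-1}
= \tfrac{1}{k}\omega^{-1} + R_k,
\]
where $R_k \colon T^*X \to TX$ is a bundle map with $R_k = O(k^{-3})$ in $C^\infty$. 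Applying both sides to $df$ and multiplying by $k$,
\[
w_k = kv_{f,k} - v_f = kR_k(df),
\]
so $w_k$ is a vector field of the form $S_k(df)$ where $S_k$ is a smooth bundle map $T^*X \to TX$ with $\|S_k\|_{C^\infty} = O(k^{-2})$.

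Finally, applying $\bar\partial$ in local holomorphic coordinates yields
\[
\bigl|\bar\partial w_k\bigr| \;\leq\; C\bigl(|\nabla S_k|\,|df| + |S_k|\,|\nabla df|\bigr) \;\leq\; Ck^{-2}\bigl(|df| + |\nabla df|\bigr),
\]
pointwise in the metric $\omega$. Squaring and integrating against $\omega^n/n!$ gives
\[
\|(k\D_k - \D)f\|_{L^2(\omega)} = \|\bar\partial w_k\|_{L^2(\omega)} \leq C k^{-2}\|f\|_{L^2_2(\omega)},
\]
which is the claim. The only even mildly subtle point, which I would treat carefully, is the algebraic expansion of $\omega_k^{-1}$; after that everything reduces to the $C^\infty$-smallness of $\eta_k$ already guaranteed by Theorem~\ref{asymptotics-Bergman}, so there is no real analytic obstacle.
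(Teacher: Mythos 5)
Your proposal is correct and follows essentially the same route as the paper: both arguments reduce the claim to the expansion $\omega_k = k\omega + O(k^{-1})$, compare $\omega_k^{-1}$ with $k^{-1}\omega^{-1}$ via an algebraic inversion identity (the paper uses $\omega'^{-1}-\omega^{-1} = \omega'^{-1}(\omega-\omega')\omega^{-1}$ after factoring out the scaling $\D_{k\omega}=k^{-1}\D_\omega$, you use a Neumann series), and then bound $\delb$ applied to the resulting $O(k^{-2})$-small correction by $\|f\|_{L^2_2(\omega)}$. The differences are purely presentational, so there is nothing to add.
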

\begin{proof}
This follows from the fact that $\omega_k = k\omega + O(k^{-1})$. The Hamiltonian vector field generated by $f$ via $\omega$ is $\omega^{-1}(\diff f)$, where we write $\omega^{-1}$ for the inverse to contraction with $\omega$. Since $\D f= \delb(\omega^{-1}(\diff f))$ it follows that $\D_{k\omega} = k^{-1} \D_\omega$. Hence, 
\[
k\D_k = k\D_{k\omega + O(k^{-1})} =  \D_{\omega + O(k^{-2})}
\]

Now we appeal to the following fact: there exists constants $c$ and $C$ such that if $\omega, \omega'$ are two Kähler forms on $X$ with $\|\omega' - \omega\|_{C^1} \leq c$, then 
\[
\| (\D_{\omega'} - \D_{\omega}) f \|_{L^2}
\leq
C \| \omega' - \omega \|_{C^1} \| f\|_{L^2_2},
\]
where the norms are defined with respect to $\omega$. This is true because, provided $\omega'$ is sufficiently close to $\omega$, there is a constant $C$ such that for all $\omega'$ and all covectors~$\alpha$,
\[
\| \omega'^{-1}(\alpha)\|_{L^2_1} \leq C \| \alpha \|_{L^2_1}
\]
It now follows that
\begin{eqnarray*}
\| (\D_{\omega'} - \D_{\omega}) f\|_{L^2}
&\leq&
\left\| (\omega'^{-1} -  \omega^{-1})(\diff f) \right\|_{L^2_1},\\
&=&
\left\| \omega'^{-1}(\omega - \omega')\omega^{-1} (\diff f)\right\|_{L^2_1},\\
&\leq&
C \| \omega' - \omega \|_{C^1} \| f\|_{L^2_2}.
\end{eqnarray*}
\end{proof}

\begin{lemma}
\label{Lapk-Lap-control}
There exists a constant $C$ such that for all $f \in L^2_2$,
\[
\| (k \Delta_k - \Delta) f \|_{L^2} 
\leq 
C k^{-2} \| f \|_{L^2_2},
\]
where $\Delta_k$ denotes the $\omega_k$-Laplacian and the norms are defined with respect to $\omega$
\end{lemma}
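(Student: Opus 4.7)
The plan is to imitate the proof of Lemma \ref{Dk-D-control} almost verbatim, exploiting the fact that the Laplacian on functions depends on the K\"ahler metric in an even simpler way than $\D$ does. First I would record the scaling identity $\Delta_{c\omega} = c^{-1}\Delta_\omega$ for any positive constant $c$, which holds because $\Delta_\omega f$ is (up to sign) the trace of $i\delb\del f$ against the inverse K\"ahler metric. Combined with the expansion $\omega_k = k\omega + O(k^{-1})$ in $C^\infty$ from Theorem \ref{asymptotics-Bergman}, this yields
\[
k\Delta_k = k\,\Delta_{k\omega + O(k^{-1})} = \Delta_{\omega + O(k^{-2})},
\]
so it suffices to control how $\Delta$ varies under $C^0$-small perturbations of $\omega$.

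Next I would establish the auxiliary claim: there exist constants $c, C > 0$ such that for every K\"ahler form $\omega'$ with $\|\omega' - \omega\|_{C^0} \leq c$ and every $f \in L^2_2$,
\[
\|(\Delta_{\omega'} - \Delta_\omega)f\|_{L^2} \leq C\|\omega' - \omega\|_{C^0}\|f\|_{L^2_2}.
\]
Writing $\Delta_\omega f = -2(i\delb\del f, \omega)_\omega$, the difference $\Delta_{\omega'}f - \Delta_\omega f$ is pointwise bilinear in $(\omega'^{-1} - \omega^{-1})$ and $i\delb\del f$. The algebraic identity $\omega'^{-1} - \omega^{-1} = -\omega'^{-1}(\omega' - \omega)\omega^{-1}$ together with uniform boundedness of $\omega'^{-1}$ for $\omega'$ close to $\omega$ then gives the pointwise bound
\[
|(\Delta_{\omega'} - \Delta_\omega)f| \leq C\|\omega' - \omega\|_{C^0}\,|i\delb\del f|_\omega,
\]
from which the $L^2$ estimate follows by integration, since $\|i\delb\del f\|_{L^2} \leq C\|f\|_{L^2_2}$. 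Applying the claim to $\omega' = \omega + O(k^{-2})$ from the first step finishes the proof.

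I do not expect any genuine obstacle: the argument is essentially a direct transcription of the proof of Lemma \ref{Dk-D-control}, and if anything slightly easier, since the Laplacian on functions depends on $\omega$ only through the inverse metric pointwise and not through its derivatives, so $C^0$-closeness of the metrics is sufficient in place of the $C^1$-closeness that was required for $\D$.
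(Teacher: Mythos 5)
Your proposal is correct and follows essentially the same route as the paper, which likewise deduces the estimate from $\omega_k = k\omega + O(k^{-1})$ together with a pointwise algebraic formula for the Laplacian (the paper quotes $\Delta f\,\omega^n = 2n\, i\delb\del f\wedge\omega^{n-1}$, equivalent to your trace formulation); you simply spell out the perturbation step that the paper leaves implicit. Your observation that only $C^0$-closeness of the metrics is needed here, versus $C^1$ for $\D$ in Lemma \ref{Dk-D-control}, is accurate.
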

\begin{proof}
This follows from the fact that $\omega_k = k\omega + O(k^{-1})$ together with the formula 
$
\Delta f\, \omega^n = 2 n i \delb \del f \wedge \omega^{n-1}
$
for the Laplacian on a Kähler manifold.
\end{proof}

\begin{lemma}
\label{L2k(Dk)-controls-L2(D)}
Assume that $\Aut(X,L)/\C^*$ is discrete. There exists a constant $C$ such that for all $f \in L^2_2$ and all large $k$,
\[
\| \D_k f\|^2_{L^2(\omega_k)}
\geq
\left(k^{n-2} - Ck^{n-4} \right) 
\| \D f \|^2_{L^2(\omega)}
-
Ck^{n-4}\| f\|^2_{L^2(\omega)}
\]
\end{lemma}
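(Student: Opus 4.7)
The plan is to split the comparison into two independent steps: first swap the ambient Riemannian data (volume form and pointwise Hermitian inner product) from $\omega_k$ to $\omega$ using Lemma~\ref{L2k-L2-equivalence}, and then replace the operator $\D_k$ by $k^{-1}\D$ using Lemma~\ref{Dk-D-control}. The passage from $\D_k$ to $\D$ will introduce an $L^2_2$-norm of $f$ that I would absorb using a Garding-type estimate for the second-order elliptic operator~$\D$.

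Concretely, I would first apply Lemma~\ref{L2k-L2-equivalence}---in its version for sections of $\Lambda^{0,1}\otimes TX$---to $s=t=\D_k f$ to obtain, for all large $k$,
\[
\|\D_k f\|^2_{L^2(\omega_k)} \geq \bigl(k^n - Ck^{n-2}\bigr)\|\D_k f\|^2_{L^2(\omega)}.
\]
For the second step, the reverse triangle inequality applied to Lemma~\ref{Dk-D-control} gives $\|k\D_k f\|_{L^2(\omega)} \geq \|\D f\|_{L^2(\omega)} - Ck^{-2}\|f\|_{L^2_2(\omega)}$. Squaring and invoking Young's inequality in the weighted form $2ab \leq k^{-2}a^2 + k^2 b^2$ (with $a=\|\D f\|$ and $b=Ck^{-2}\|f\|_{L^2_2}$) yields
\[
\|\D_k f\|^2_{L^2(\omega)} \geq \bigl(k^{-2}-k^{-4}\bigr)\|\D f\|^2_{L^2(\omega)} - Ck^{-4}\|f\|^2_{L^2_2(\omega)}.
\]
To eliminate the $L^2_2$-norm I would invoke the Garding inequality $\|f\|^2_{L^2_2} \leq C(\|\D f\|^2_{L^2} + \|f\|^2_{L^2})$, which holds with a metric-uniform constant because the principal symbol $\xi \mapsto \xi^{0,1}\otimes\xi^{1,0}$ of $\D$ is injective for $\xi\neq 0$. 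Substituting folds an $O(k^{-4})$ correction into the $\|\D f\|^2$ coefficient and produces a clean $O(k^{-4})\|f\|^2_{L^2}$ term. Multiplying through by the Step~1 prefactor $k^n - Ck^{n-2}$ and collecting lower order terms then gives exactly the stated lower bound.

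The only delicate point is the choice of weight in the Young inequality used in the second step. A symmetric split $2ab \leq \tfrac12 a^2 + 2 b^2$ would leave a coefficient of $(1-\epsilon)k^{n-2}$ in front of $\|\D f\|^2_{L^2(\omega)}$ after combining with Step~1, which is strictly weaker than the required $k^{n-2}-Ck^{n-4}$. Choosing the weight to scale like $k^{-2}$ pushes the error in the principal coefficient down to order $k^{n-4}$; once Garding has been used, the $k^{n-4}$ error in front of $\|f\|^2_{L^2(\omega)}$ is automatic. I note that the hypothesis on $\Aut(X,L)/\C^*$ is not actually used in this argument, but it is consistent with the framework surrounding the lemma.
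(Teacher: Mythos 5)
Your argument is correct and is essentially the paper's own proof with the steps reordered: it combines exactly the same three ingredients---Lemma \ref{Dk-D-control}, the overdetermined-elliptic (Garding) estimate for $\D$, and Lemma \ref{L2k-L2-equivalence}---only converting to the $L^2(\omega_k)$-norm at the start rather than at the end, and applying the elliptic estimate after squaring rather than before. Your closing remark also matches the paper's structure: there the discreteness of $\Aut(X,L)/\C^*$ is invoked only to guarantee positivity of the right-hand side before squaring, a point your version avoids provided you observe that from $a \geq b - c$ with $a,b,c \geq 0$ one always has $a^2 \geq b^2 - 2bc$ (and that when $\|\D f\|_{L^2(\omega)} \leq Ck^{-2}\|f\|_{L^2_2(\omega)}$ the stated bound is trivially true), so the hypothesis is indeed not essential for this lemma.
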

\begin{proof}
By Lemma \ref{Dk-D-control}, 
\[
\| k \D_k f \|_{L^2(\omega)} 
\geq
\| \D f\|_{L^2(\omega)} - Ck^{-2} \| f\|_{L^2_2(\omega)}.
\]
Since the symbol of $\D$ is injective, $\D$ satisfies an overdetermined-elliptic estimate: there is a constant $C$ such that for all $f \in L^2_2$,
\[
\| f \|_{L^2_2(\omega)} 
\leq 
C \big( \| f \|_{L^2(\omega)} + \| \D f\|_{L^2(\omega)} \big).
\]
It follows that
\[
\| k \D_k f\|_{L^2(\omega)}
\geq
\left(1 - Ck^{-2} \right) \| \D f \|_{L^2(\omega)}
- 
Ck^{-2} \| f\|_{L^2(\omega)}.
\]
Now $\Aut(X,L)/\C^*$ is discrete so $\ker \D = \ker \D_k$ is simply the constants. It follows that  unless $f$ is constant---in which case the required estimate is trivial---the right-hand-side is eventually positive for large $k$. Hence squaring gives  
\begin{eqnarray*}
\| \D_k f \|^2_{L^2(\omega)}
&\geq&
k^{-2}\left(1 - Ck^{-2} \right)^2 \| \D f\|^2_{L^2(\omega)}
-
Ck^{-4}\|\D f\|_{L^2(\omega)} \| f\|_{L^2(\omega)},\\
&\geq&
k^{-2}\left(1 - Ck^{-2} \right)^2 \| \D f\|^2_{L^2(\omega)}
-
Ck^{-4} \| f \|^2_{L^2(\omega)} - Ck^{-4}\| \D f \|^2_{L^2(\omega)},\\
&\geq&
\left(k^{-2}-Ck^{-4}\right) \|\D f\|^2_{L^2(\omega)}
-
Ck^{-4} \| f\|^2_{L^2(\omega)}.
\end{eqnarray*}

To complete the proof, we convert the left-hand-side to the $L^2(\omega_k)$-norm via Lemma \ref{L2k-L2-equivalence}. This gives a constant $C$ such that for any $L^2$-section $s$ of $\Lambda^{0,1}\otimes TX$, 
\[
\| s \|^2_{L^2(\omega_k)}
\geq
\left(k^n - C k^{n-2} \right) \| s \|^2_{L^2(\omega)}.
\]
Applying this to $s = \D_k f$ gives the result.
\end{proof}

\section{An asymptotic upper bound on $\nu_{j,k}$}
\label{upper-bound-section}

Our goal in this section is to prove the following.

\begin{proposition}
\label{upper-bound-result}
Suppose that $\lambda_r < \lambda_{r+1} = \cdots = \lambda_s< \lambda_{s+1}$ and, moreover, that the $r^\text{th}$-inductive hypotheses hold. Then for all  $j = r+1, \ldots, s$,
\[
\nu_{j,k} 
\leq 
\frac{\lambda_{j}}{64\pi^3k^2} + O(k^{-3}).
\]
\end{proposition}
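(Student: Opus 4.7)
The plan is to obtain the upper bound by the min-max characterisation
\[
\nu_{j,k} = \min_{\dim W = j+1}\ \max_{0\neq B \in W}\frac{\tr(B\,P^*_kP_k B)}{\tr(B^2)},
\]
applied to an explicit $(s+1)$-dimensional test subspace $W_k \subset i\u(n_k+1)$. Let $\phi_0,\dots,\phi_s$ be an $L^2(\omega)$-orthonormal basis of eigenfunctions of $\D^*\D$ with eigenvalues $\lambda_0 \le \cdots \le \lambda_s$, and set $W_k = \mathrm{span}\{Q_{\phi_0,k},\dots,Q_{\phi_s,k}\}$. The motivation is the guiding principle of the paper: $Q_{\cdot,k}$ is an infinitesimal quantisation, so eigenfunctions of $\D^*\D$ should lift to an approximate basis of the low-lying eigenspaces of $P^*_kP_k$.

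The calculation reduces to two matrix expansions on $W_k$. Theorem \ref{asymptotics-hessians} directly gives
\[
\tr\bigl(Q_{\phi_i,k}\,P^*_kP_k\,Q_{\phi_j,k}\bigr) = \frac{\lambda_i k^n}{4\pi}\,\delta_{ij} + O(k^{n-1}),
\]
using $\int_X \phi_i\,\D^*\D\phi_j\,\omega^n/n! = \lambda_j\,\delta_{ij}$. For the trace pairing, one uses the identification of $Q_{f,k}$ with the Toeplitz operator of $4\pi k f + \Delta f$, so that $\tr(Q_{f,k}Q_{g,k}) = \int_X K_{4\pi k f + \Delta f,\,4\pi k g + \Delta g,\,k}\,\omega^n/n!$, and applies the Ma--Marinescu expansion (Theorem \ref{mm-second-expansion}), extended by linearity to $k$-dependent arguments exactly as in the proof of Lemma \ref{asymptotics-trQ2mu}. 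Since $q_{f,g,0}=fg$, the leading term yields
\[
\tr(Q_{\phi_i,k}Q_{\phi_j,k}) = 16\pi^2 k^{n+2}\,\delta_{ij} + O(k^{n+1}),
\]
so in particular $W_k$ is $(s+1)$-dimensional for all large $k$.

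The rest is algebra. For $B = \sum_i c_i Q_{\phi_i,k} \in W_k$ with $\sum_i c_i^2 = 1$,
\[
\frac{\tr(B\,P^*_kP_k B)}{\tr(B^2)} = \frac{(k^n/4\pi)\sum_i c_i^2\lambda_i + O(k^{n-1})}{16\pi^2 k^{n+2} + O(k^{n+1})} \le \frac{\lambda_s}{64\pi^3 k^2} + O(k^{-3}),
\]
the last inequality using $\sum_i c_i^2\lambda_i \le \lambda_s$. The min-max principle therefore gives $\nu_{s,k} \le \lambda_s/(64\pi^3 k^2) + O(k^{-3})$; combined with the monotonicity $\nu_{r+1,k}\le\cdots\le\nu_{s,k}$ and the coincidence $\lambda_{r+1}=\cdots=\lambda_s$, the estimate is established for every $j \in \{r+1,\dots,s\}$. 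Notably the $r^\mathrm{th}$ inductive hypotheses are \emph{not} invoked in this direction; they are reserved for the companion lower bound.

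I anticipate no major obstacle. The mildest subtlety is applying the Ma--Marinescu expansion to Toeplitz arguments that depend linearly on $k$ in the trace-pairing calculation, but this is the exact manipulation already performed in Lemma \ref{asymptotics-trQ2mu} and introduces no new analytic input; the proposition is, in essence, a clean consequence of Theorem \ref{asymptotics-hessians} together with Toeplitz trace asymptotics.
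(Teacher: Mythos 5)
Your proof is correct, but it takes a genuinely different route from the paper. The paper fixes the span $E_{r+1}$ of the $\D^*\D$-eigenspaces with $\lambda\le\lambda_{r+1}$ and the span $F_{r,k}$ of the low eigenspaces of $P^*_kP_k$, and argues by dimension counting: either $\dim F_{r,k}>r+1$, in which case $\nu_{r+1,k}=\nu_{r,k}$ and the inductive hypothesis I1 is invoked, or one finds $f_k$ in the kernel of the projection $E_{r+1}\to F_{r,k}$, so that $Q_{f_k,k}\in F_{r,k}^\perp$ and the variational characterisation over $F_{r,k}^\perp$ applies; the bound for $\nu_{r+2,k},\dots,\nu_{s,k}$ is then obtained by iterating with $F_{r+1,k}$, etc., and since $f_k$ varies with $k$ the paper must invoke the uniformity of Theorem \ref{asymptotics-hessians} over the unit sphere of $E_{r+1}$ together with a diagonal argument. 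You instead apply Courant--Fischer to the explicit test space $W_k=\mathrm{span}\{Q_{\phi_0,k},\dots,Q_{\phi_s,k}\}$, with the Gram matrix asymptotics $\tr(Q_{\phi_i,k}Q_{\phi_j,k})=16\pi^2k^{n+2}\delta_{ij}+O(k^{n+1})$ (your Toeplitz-trace identity $\tr(T_FT_G)=\int_X K_{F,G,k}\,\omega^n/n!$ is correct; note you could get by with Theorem \ref{mm-first-expansion} alone via $\tr(T_FT_G)=\int_X F\,K_{G,k}\,\omega^n/n!$, which is the route the paper takes for $\tr(Q_{f_k,k}^2)$). Since the Gram matrix is $16\pi^2k^{n+2}(I+O(k^{-1}))$, the space is indeed $(s+1)$-dimensional for large $k$, the cross terms in both quadratic forms are absorbed uniformly over the unit sphere of coefficients (finitely many fixed pairs, so no diagonal argument is needed), and the ratio bound follows. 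What your approach buys is that the upper bound $\nu_{j,k}\le\lambda_j/(64\pi^3k^2)+O(k^{-3})$ comes out unconditionally, for every $j$ at once, with no use of the inductive hypotheses and no case split on $\dim F_{r,k}$; what the paper's approach buys is that its projection of $E_{r+1}$ into $F_{r,k}$ and the resulting orthogonality to the genuine low eigenvectors of $P^*_kP_k$ is exactly the mechanism reused in the lower-bound argument (Proposition \ref{lower-bound-result}), so the two halves of the induction are built from the same pieces. Your observation that the hypotheses are only needed for the companion lower bound is accurate.
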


\begin{proof}
We begin with the proof for $\nu_{r+1,k}$. Write $E_{r+1}$ for the sum of the eigen\-spaces of $\D^*\D$ with eigenvalue $\lambda \leq \lambda_{r+1}$. Similarly write $F_{r,k}$ for the sum of the eigen\-spaces of $P^*_kP_k$ with eigenvalue $\nu \leq \nu_{r,k}$. 

We know that $\dim E_{r+1} \geq r+2$ whilst $\dim F_{r,k} \geq r+1$ and that if $\dim F_{r,k} > r+1$ then $\nu_{r+1,k} = \nu_r$ which, by induction, is equal to $\frac{\lambda_r}{64\pi^3k^2} + O(k^{-3})$. So, for those values of $k$ for which $\dim F_{r,k} >r+1$ we certainly have $\nu_{r+1,k}$ satisfying the required upper bound; the effort is required to treat those values of $k$ for which $\dim F_{r,k}= r+1$.

For such values of $k$, we consider the map $\pi_k \colon E_{r+1} \to F_{r,k}$ where $\pi_k(f)$ is the orthogonal projection of $Q_{f,k}$ to $F_{r,k}$. Since $\dim E_{r+1} > \dim F_{r,k}$ it follows that $\pi_k$ has non-trivial kernel. For each such $k$, let $f_k \in \ker \pi_k$ have unit norm in $L^2$. Since $Q_{f_k,k} \in  F_{r,k}^\perp$, we have that
\[
\nu_{r+1,k} \leq \frac{\| P_k Q_{f_k,k} \|^2}{\tr(Q_{f_k,k}^2)}.
\]

Now, to bound the numerator above we apply Theorem \ref{asymptotics-hessians}, which applies uniformly to all $f$ in the unit sphere of $E_{r+1}$. This gives, for all $l$ and for any integer $k >0$ for which $\dim F_{r,k} =r+1$, 
\[
\| P_l Q_{f_k,l} \|^2 \leq \frac{l^n}{4\pi} \lambda_{r+1} + Cl^{n-1}
\]
for some $C$ (since $f_k$ is an eigenvector of $\D^*\D$ with eigenvalue $\lambda \leq \lambda_{r+1}$). Putting $l=k$ we get
\begin{equation}\label{numerator_bound}
\| P_k Q_{f_k,k} \|^2 \leq \frac{k^n}{4\pi} \lambda_{r+1} + Ck^{n-1}.
\end{equation}

Meanwhile, by definition (\ref{Q-matrix}) of $Q_{f,k}$, $\tr (Q_{f_k,k}^2)$ is given by the double integral
\[
\int_{X \times X}
\left(4\pi kf_k + \Delta f_k\right)(x)
\left(4\pi kf_k + \Delta f_k\right)(y)
(s_\alpha,s_\beta)(x)(s_\beta, s_\alpha)(y)
\frac{\omega_x^n\wedge\omega_y^n}{(n!)^2}
\]
We can rewrite this using Toeplitz integral kernels as
\[
\int_X\left(
16\pi^2k^2  f_k K_{f_k,k} 
+
4\pi k f_k K_{\Delta f_k, k}
+
4\pi k \Delta f_k K_{f_k,k}
+
\Delta f_k K_{\Delta f_k,k} 
\right)\,\frac{\omega^n}{n!}.
\]
Now, since the $f_k$ lie in the unit sphere of $E_{r+1}$ which is compact in the $C^\infty$-topology, Theorem \ref{mm-first-expansion} gives that $K_{f_k,k} = k^n f_k + O(k^{n-1})$ and $K_{\Delta f_k, k} = O(k^n)$. (Again we use the uniformity of Theorem \ref{mm-first-expansion} in $f$ together with a diagonal argument to deduce these estimates, just as Theorem \ref{asymptotics-hessians} was applied to derive (\ref{numerator_bound}) above.) From here and the fact that $\| f_k\|_{L^2} = 1$, it follows that
\begin{equation}\label{denominator_bound}
\tr(Q_{f_k,k}^2)
=
16\pi^2k^{n+2} + O(k^{n+1}).
\end{equation}
The bounds (\ref{numerator_bound}) and (\ref{denominator_bound}) combine to give
\[
\nu_{r+1,k} \leq \frac{\lambda_{r+1}}{64\pi^3k^2} +O(k^{-3})
\]
as required.

Suppose now that $\lambda_{r+2} = \lambda_{r+1}$. Then $\dim E \geq r+3$ and we can repeat the argument, this time projecting onto the span $F_{r+1,k}$ of eigenspaces with $\nu \leq \nu_{r+1,k}$. This time $\dim F_{r+1,k} \geq r+2$ and is strictly greater only if $\nu_{r+2,k} = \nu_{r+1,k}$, which we have just bounded above. Whenever $\dim F_{r+1,k} = r+2$ we have an element $f_k \in E_{r+1}$ with $Q_{f_k,k} \in F_{r+1,k}^\perp$ and this gives a sequence enabling us to bound $\nu_{r+2,k}$ above. Continuing in this way until we reach $\lambda_s$ completes the proof.
\end{proof}

\section{An asymptotic lower bound on $\nu_{j,k}$}
\label{lower-bound-section}

The goal of this section is to prove the following.

\begin{proposition}\label{lower-bound-result}
Assume that $\Aut(X,L)/\C^*$ is discrete. Suppose $\lambda_r < \lambda_{r+1}$ and that the $r^{\text{th}}$-inductive hypotheses hold. Then
\[
\nu_{r+1,k} \geq \frac{\lambda_r}{64\pi^3 k^2} + O(k^{-3}).
\]
\end{proposition}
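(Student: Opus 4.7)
The plan rests on the variational description of the eigenvalues. Since $\lambda_r < \lambda_{r+1}$ and I1 gives $\nu_{r,k} = \lambda_r/(64\pi^3 k^2) + O(k^{-3})$, we have $\nu_{r,k} < \nu_{r+1,k}$ for all large $k$, so
\[
\nu_{r+1,k} \;=\; \inf_{B \in F_{r,k}^\perp,\, B \neq 0} \frac{\|P_k(B)\|^2}{\tr(B^2)}.
\]
I would take a minimiser $B_k \in F_{r,k}^\perp$ normalised by $\tr(B_k^2) = 1$, so that $\nu_{r+1,k} = \|P_k(B_k)\|^2$, and chain three ingredients: (i) Proposition \ref{P*P-controls-D} combined with Lemma \ref{L2k(Dk)-controls-L2(D)} and the control $\|H(B_k)\|^2_{L^2(\omega)} \leq Ck^{-n}$ from Lemma \ref{tr-controls-L2H} should give $\|P_k(B_k)\|^2 \geq \tfrac{k^{n-2}}{4\pi}\|\D H(B_k)\|^2_{L^2(\omega)} + O(k^{-3})$; (ii) after decomposing $H(B_k) = \phi_k + \psi_k$ with $\phi_k \in E_r$ and $\psi_k \in E_r^\perp$, the variational characterisation of $\lambda_{r+1}$ on $E_r^\perp$ gives $\|\D H(B_k)\|^2_{L^2(\omega)} \geq \lambda_{r+1}\|\psi_k\|^2_{L^2(\omega)}$; (iii) Proposition \ref{hessian-balancing-identity} with $A=B=B_k$ pins $\|H(B_k)\|^2_{L^2(\omega)}$ asymptotically to $(16\pi^2 k^n)^{-1}$.

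For (ii) to be useful I must show $H(B_k)$ is asymptotically $L^2$-orthogonal to $E_r$. Given any $\phi \in E_r$ with $\|\phi\|_{L^2}=1$, I3 (extended by linearity over the different $V_p \subset E_r$) provides $A_{\phi,k} \in F_{r,k}$ with $H(A_{\phi,k}) \to \phi$ in $L^2_2$; I2 then forces $\tr(A_{\phi,k}^2) = 16\pi^2 k^n(1 + O(k^{-1}))$. Because $F_{r,k}$ is $P_k^*P_k$-invariant, the polarised form of Proposition \ref{hessian-balancing-identity} with $A = A_{\phi,k}$, $B = B_k$ has vanishing left-hand side (since $\tr(A_{\phi,k}\,P_k^*P_k B_k) = \nu_{r+1,k}\tr(A_{\phi,k} B_k) = 0$), and Lemma \ref{bound-trABmu} yields
\[
4\pi\langle H(A_{\phi,k}), H(B_k)\rangle_{L^2(\omega_k)} + \langle dH(A_{\phi,k}), dH(B_k)\rangle_{L^2(\omega_k)} = O(k^{n/2-1}).
\]
Integrating the second term by parts into $\langle \Delta_k H(A_{\phi,k}), H(B_k)\rangle$, rescaling to $\omega$ by Lemmas \ref{L2k-L2-equivalence} and \ref{Lapk-Lap-control}, and using that $H(A_{\phi,k}) \to \phi$ in $L^2_2$ (so that $k^{-1}\Delta H(A_{\phi,k}) \to 0$ in $L^2$), I obtain $|\langle H(B_k), \phi\rangle_{L^2(\omega)}| = O(k^{-n/2-1})$ uniformly in $\phi$ on the unit sphere of the finite-dimensional space $E_r$. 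Hence the projection $\phi_k$ satisfies $\|\phi_k\|^2_{L^2(\omega)} = O(k^{-n-2})$, a factor $k^{-2}$ below the $L^2$-mass available to $H(B_k)$, and consequently $\|\D H(B_k)\|^2_{L^2(\omega)} \geq \lambda_{r+1}(\|H(B_k)\|^2 - \|\phi_k\|^2) = \lambda_{r+1}\|H(B_k)\|^2(1 + O(k^{-2}))$.

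The main obstacle is (iii), the \emph{lower} bound $\|H(B_k)\|^2_{L^2(\omega)} \geq (16\pi^2 k^n)^{-1}(1 - o(1))$. Proposition \ref{hessian-balancing-identity} applied to $A=B=B_k$ gives the exact identity
\[
\nu_{r+1,k} + 4\pi\|H(B_k)\|^2_{L^2(\omega_k)} + \|dH(B_k)\|^2_{L^2(\omega_k)} = \tfrac{1}{4\pi} + O(k^{-1}),
\]
in which $\nu_{r+1,k} = O(k^{-2})$ by Proposition \ref{upper-bound-result}, so the task reduces to proving $\|dH(B_k)\|^2_{L^2(\omega_k)} = o(1)$. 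I would close this by a bootstrap: the asymptotic orthogonality together with the upper bound force $H(B_k)/\|H(B_k)\|_{L^2}$ to be, up to $L^2$-errors tending to zero, an approximate $\D^*\D$-eigenfunction with eigenvalue $\leq \lambda_{r+1}$, hence to lie in a fixed finite-dimensional subspace on which $\Delta$ is bounded; Lemma \ref{Lapk-Lap-control} then yields $\Delta_k H(B_k) = O(k^{-1})H(B_k)$ in $L^2(\omega_k)$, giving $\|dH(B_k)\|^2_{L^2(\omega_k)} = \langle H(B_k), \Delta_k H(B_k)\rangle = O(k^{-1})$. Plugging this back into the identity pins $\|H(B_k)\|^2_{L^2(\omega)}$, and substituting into (i)--(ii) completes the proof.
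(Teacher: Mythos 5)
Your proposal reproduces part of the paper's skeleton (asymptotic orthogonality of $H(B_k)$ to $E_r$ via the inductive hypotheses and the polarised Hessian identity, and Proposition \ref{P*P-controls-D} to pass from $\|P_k(B)\|$ to $\|\D H(B)\|$), but it has two gaps, one repairable and one fatal. The repairable one is the opening step: I1 tells you nothing about $\nu_{r+1,k}$, so the strict inequality $\nu_{r,k}<\nu_{r+1,k}$ is part of the conclusion of Proposition \ref{lower-bound-result}, not something you may assume; if $\dim F_{r,k}>r+1$ then $\min_{B\in F_{r,k}^\perp}\|P_kB\|^2/\tr(B^2)$ is an eigenvalue of index strictly greater than $r+1$, and a lower bound for it says nothing about $\nu_{r+1,k}$. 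The paper avoids this (Lemma \ref{variation_nu_r+1}) by working with the $(r+1)$-dimensional $P_k^*P_k$-invariant subspace $W_k\subset F_{r,k}$ spanned by the $A_{j,k}$ supplied by I3 (linear independence from I2), and using $\nu_{r+1,k}\ge\min_{W_k^\perp}$. A smaller quantitative slip: I3 only gives $\|H(A_{\phi,k})-\phi\|_{L^2}=O(k^{-1/2})$, so together with $\|H(B_k)\|_{L^2}=O(k^{-n/2})$ your inner products are $O(k^{-(n+1)/2})$, not $O(k^{-n/2-1})$; this weaker rate still suffices for the final $O(k^{-3})$.

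The fatal gap is your step (iii), i.e.\ the claim $\|\diff H(B_k)\|^2_{L^2(\omega_k)}=o(1)$, equivalently the lower bound $\|H(B_k)\|^2_{L^2(\omega)}\gtrsim k^{-n}$. The bootstrap is circular: to say that $H(B_k)/\|H(B_k)\|_{L^2}$ is an approximate low eigenfunction (hence with controlled $\Delta$) you need $\|\D H(B_k)\|^2_{L^2}\le C\|H(B_k)\|^2_{L^2}$, but Proposition \ref{P*P-controls-D} together with the upper bound on $\nu$ only gives $\|\D H(B_k)\|^2_{L^2}\le Ck^{-n}$, which is useless without the very lower bound on $\|H(B_k)\|_{L^2}$ you are trying to establish. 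Nothing in your argument excludes the dangerous scenario in which essentially all of $\tr(B_k^2)$ sits in the tangential field $\xi_{B_k}^\top=\nabla H(B_k)$: i.e.\ $H(B_k)$ oscillates at frequency $\sim k^{1/2}$, $\|\diff H(B_k)\|^2_{L^2(\omega_k)}\approx\tfrac{1}{4\pi}$, and $\|H(B_k)\|^2_{L^2(\omega_k)}=o(1)$ while still being nearly orthogonal to $E_r$. Ruling this out is precisely where the paper invokes the Phong--Sturm inequality $\|\xi_B^\top\|^2_{L^2(\omega_k)}\le Ck\|P_k(B)\|^2$ (Lemma \ref{phong-sturm-lemma}), a genuinely non-trivial input using the discreteness of $\Aut(X,L)/\C^*$, which your proposal neither cites nor replaces; and "lies in a fixed finite-dimensional subspace on which $\Delta$ is bounded" does not follow from $L^2$-closeness alone. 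Note also that the paper's route never needs a lower bound on $\|H(B)\|$: it bounds $\|H(B)\|^2_{L^2(\omega_k)}$ \emph{above} by $\bigl(\tfrac{4\pi k^2}{\lambda_{r+1}}+Ck\bigr)\|P_k(B)\|^2+Ck^{-1}\tr(B^2)$ using the orthogonality and Proposition \ref{P*P-controls-D}, and feeds this and Phong--Sturm into the inequality $4\pi\|H(B)\|^2_{L^2(\omega_k)}+\|\xi_B^\top\|^2_{L^2(\omega_k)}+\|P_k(B)\|^2\ge\bigl(\tfrac{1}{4\pi}-Ck^{-1}\bigr)\tr(B^2)$ from Proposition \ref{hessian-balancing-identity} and Lemma \ref{bound-trABmu}, so every term is absorbed into $\|P_k(B)\|^2$ plus a small multiple of $\tr(B^2)$. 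You would need to restructure your argument in that direction, or supply an independent proof of the tangential estimate, to close the proof.
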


The proof is somewhat lengthy and will finally be established in \S\ref{lower-bound-proof-section}. As is explained in \S\ref{strategy}, a key step is to use $\| P_k(A) \|$ to control $\| \D H(A) \|_{L^2}$. This is the content of Proposition \ref{P*P-controls-D} in \S\ref{2ff-asymptotics} below. 

The result and its proof are related to work of Phong and Sturm \cite{phong-sturm} concerning embeddings with $R$-bounded geometry, which we now briefly describe.

\begin{definition}\label{R-bounded}
Fix $R >0$ and an integer $s$ greater than 4. Given $b \in \B_k$ we write $\omega_b \in kc_1(L)$ for the corresponding Kähler metric. We say \emph{$b$ has $R$-bounded geometry in $C^s$} if $\omega_b > R^{-1}k\omega$ and
\[
\| \omega_b - k\omega \|_{C^s(k\omega)} < R.
\]
\end{definition}

It follows from Theorem \ref{asymptotics-Bergman} that the sequence $\Hilb_k(h)$ that we are interested in has $R$-bounded geometry for  all large $k$ and any choice of $s$. 

\begin{theorem}[Phong--Sturm, \cite{phong-sturm}]
\label{phong-sturm-theorem}
Assume that $\Aut(X, L)/\C^*$ is discrete. Fix $R>0$ and an integer $s \geq 4$. There exists a constant $C$ such that for all points $b \in \B_k$ with $R$-bounded geometry in $C^s$,
\[
\nu_{1,k}(b) \geq C k^{-2}
\]
where $\nu_{1,k}(b)$ denotes the first non-zero eigenvalue of the Hessian of balancing energy at $b$.
\end{theorem}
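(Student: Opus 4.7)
The plan is to transport the overdetermined elliptic estimate for $\D$ on $(X,\omega)$ to the matrix operator $P_k^*P_k$ via the Hamiltonian lift $A\mapsto H(A)$. The geometric backbone is that on $X\hookrightarrow\C\P^{n_k}$ the restriction $\xi_A|_X$ decomposes as $\xi_A^\top+\xi_A^\perp$, where $\xi_A^\top$ is the Hamiltonian vector field of $H(A)$ for $\omega_k$ and $\|P_k A\|^2=\int_X|\xi_A^\perp|^2_{\mathrm{FS}}\,\omega_k^n/n!$. Since $\xi_A$ is globally holomorphic on $\C\P^{n_k}$, differentiating this decomposition in a tangential $(0,1)$-direction forces the tangential component of $\delb\xi_A^\top$ to equal, up to sign, the shape operator of $X\subset\C\P^{n_k}$ applied to $\xi_A^\perp$. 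Identifying the former with $\D_k H(A)$ yields the pointwise control $|\D_k H(A)|\le\|II_k\|_{C^0}\,|\xi_A^\perp|$, where $II_k$ is the second fundamental form of $X\subset\C\P^{n_k}$. A key preparatory step is to show that the $R$-bounded geometry hypothesis forces $\|II_k\|_{C^0}$ to remain uniformly bounded in $k$; this is a submanifold-geometry calculation exploiting that $\omega_b$ and $k\omega$ are uniformly equivalent in $C^s$.

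Integrating the pointwise inequality gives $\|\D_k H(A)\|^2_{L^2(\omega_k)}\le C(R)\|P_k A\|^2$. Next I would invoke Lemmas \ref{L2k-L2-equivalence} and \ref{Dk-D-control} together with the scaling $\D_{k\omega}=k^{-1}\D_\omega$ to pass to the original metric, obtaining
\[
\|\D H(A)\|^2_{L^2(\omega)}\le Ck^{2-n}\|P_k A\|^2+Ck^{-2}\|H(A)\|^2_{L^2(\omega)}.
\]
Since $\Aut(X,L)/\C^*$ is discrete, $\ker\D=\R$, and overdetermined ellipticity of $\D$ gives $\|f\|^2_{L^2_2(\omega)}\le C\|\D f\|^2_{L^2(\omega)}$ for $f$ orthogonal to the constants. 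For trace-free $A$, Lemma \ref{bound-trABmu} combined with Lemma \ref{L2k-L2-equivalence} shows that the mean $V^{-1}\int H(A)\,\omega^n/n!$ is $O(k^{-n-1}\tr(A^2)^{1/2})$, a negligible fraction of $H(A)$. After absorbing the $O(k^{-2})\|H(A)\|^2$ error into the left-hand side for large $k$, I obtain $\|H_0(A)\|^2_{L^2_2(\omega)}\le Ck^{2-n}\|P_k A\|^2$, where $H_0(A)$ denotes the mean-free part of $H(A)$.

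To close the argument, I would integrate Lemma \ref{pointwise-identity} against $\omega_k^n/n!$ and apply Lemma \ref{bound-trABmu} to reach the identity
\[
\tfrac{1}{4\pi}\tr(A^2)\bigl(1+O(k^{-1})\bigr)=4\pi\|H(A)\|^2_{L^2(\omega_k)}+\|\diff H(A)\|^2_{L^2(\omega_k)}+\|P_k A\|^2.
\]
Converting $\omega_k$-norms to $\omega$-norms via Lemma \ref{L2k-L2-equivalence} produces factors of $k^n$ on the first term and $k^{n-1}$ on the second. Applying the $L^2_2$-control on $H_0(A)$ derived above (and using that the mean contributes a negligible $O(k^{-n-2}\tr(A^2))$ to the first term) bounds each by $Ck^2\|P_k A\|^2$. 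Absorbing the remaining $O(k^{-1})\tr(A^2)$ error into the left-hand side for large $k$ yields $\tr(A^2)\le Ck^2\|P_k A\|^2$, which for $A$ an eigenvector of $P^*_kP_k$ with smallest nonzero eigenvalue is exactly $\nu_{1,k}(b)\ge C^{-1}k^{-2}$.

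The principal obstacle will be the preparatory geometric step: establishing the uniform $C^0$-bound on the second fundamental form $II_k$ from the $C^s$-bounded-geometry hypothesis. Converting integral control of $\omega_b-k\omega$ in $C^s$ into pointwise control on the extrinsic curvature of a high-codimension embedding into $\C\P^{n_k}$ requires a careful unwinding of how the Fubini--Study second fundamental form depends on the embedding, and this is where Phong and Sturm concentrate the bulk of their effort. The remaining steps amount to bookkeeping, combining the scaling lemmas of \S\ref{estimates} with standard overdetermined ellipticity of $\D$.
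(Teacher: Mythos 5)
First, a point of orientation: the paper does not prove this statement at all --- it is quoted from Phong--Sturm \cite{phong-sturm} and then imported (via Lemma \ref{phong-sturm-lemma}) into the proof of Proposition \ref{lower-bound-result}. The closest in-paper material is \S\S\ref{hessians-complex-submfd}--\ref{lower-bound-proof-section}, and your skeleton coincides with it and with Phong--Sturm's strategy: the identity $\D_k H(A)=-JF_k^*(P_k(A))$ is Proposition \ref{two-hessians-equation}, the bound on the second fundamental form via the Gauss equation is \S\ref{2ff-asymptotics}, and your closing identity $4\pi\|H(A)\|^2_{L^2(\omega_k)}+\|\diff H(A)\|^2_{L^2(\omega_k)}+\|P_kA\|^2=\tr(A^2\bar\mu_k)$ with the first two terms absorbed is exactly how Proposition \ref{lower-bound-result} runs in its base case. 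A genuinely good feature of your plan is that you do not import Lemma \ref{phong-sturm-lemma}: your $L^2_2$-control of the mean-free part of $H(A)$ re-derives $\|\xi_A^\top\|^2_{L^2(\omega_k)}\le Ck\|P_kA\|^2$, so there is no circularity. Note also that you misplace the main difficulty: the uniform bound on the second fundamental form is the soft step --- by (\ref{2ff-1})--(\ref{2ff-2}) it reduces to the boundedness of the Fubini--Study curvature plus the smallness of the intrinsic curvature of $\omega_b$ once indices are raised with $\omega_b\sim k\omega$, and $C^s$-bounded geometry with $s\ge 4$ is ample for this.

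The genuine gap is the uniformity over \emph{all} $b\in\B_k$ with $R$-bounded geometry, which is the actual content of the theorem. Every quantitative input you cite from \S\ref{estimates} --- Lemmas \ref{L2k-L2-equivalence}, \ref{Dk-D-control}, \ref{bound-trABmu}, and the $O(k^{-1})$, $O(k^{-2})$ error structure on which your absorption arguments rest --- is proved for the distinguished sequence $\Hilb_k(h)$, where the Bergman expansion gives $\omega_k=k\omega+O(k^{-1})$. Under $R$-bounded geometry one only has $\omega_b>R^{-1}k\omega$ and $\|\omega_b-k\omega\|_{C^s(k\omega)}<R$, i.e.\ $k^{-1}\omega_b$ is uniformly equivalent to $\omega$ with bounded derivatives but need \emph{not} converge to $\omega$. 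Then $k\D_b$ differs from $\D_\omega$ by $O(1)$, not $O(k^{-2})$, so your step $\|\D H(A)\|^2_{L^2(\omega)}\le Ck^{2-n}\|P_kA\|^2+Ck^{-2}\|H(A)\|^2_{L^2(\omega)}$ (and the subsequent absorption into the overdetermined-elliptic estimate for the fixed $\omega$) is not available. The repair --- and the heart of Phong--Sturm --- is to run the argument with $k^{-1}\omega_b$ itself, which needs two uniform analytic inputs you have not supplied: (i) a lower bound for the first nonzero eigenvalue of $\D^*\D_{\omega'}$ (equivalently a uniform overdetermined-elliptic estimate for $\D_{\omega'}$) holding uniformly over all K\"ahler forms $\omega'$ uniformly equivalent to $\omega$ and bounded in $C^s$; here discreteness of $\Aut(X,L)/\C^*$ gives $\ker\D_{\omega'}=\R$ for every such $\omega'$, and uniformity requires a compactness/contradiction argument, not just the estimate for $\omega$; and (ii) a version of $\|\bar\mu_b-\tfrac{1}{4\pi}\mathrm{id}\|_{\mathrm{op}}=O(k^{-1})$ valid for every $R$-bounded $b$, which rests on uniform Bergman-kernel estimates requiring only finitely many derivatives of the metric, not the $C^\infty$-compact families of Theorem \ref{asymptotics-Bergman}. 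As written, your argument proves the bound for the sequence $\Hilb_k(h)$ (where the paper in fact obtains the sharper asymptotic $\nu_{1,k}\sim\lambda_1/(64\pi^3k^2)$), but not the stated uniform theorem. A minor slip that does not affect anything: for trace-free $A$, Cauchy--Schwarz gives a mean of $H(A)$ of size $O(k^{-n/2-1}\tr(A^2)^{1/2})$ rather than $O(k^{-n-1}\tr(A^2)^{1/2})$; its contribution is still the negligible $O(k^{-2})\tr(A^2)$ you use.
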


This result is in turn an improvement on the lower bound $ \nu_{1,k}(b) \geq Ck^{-4}$, due to Donaldson \cite{donaldson-1}. Phong and Sturm's use of the second fundamental form of $X \to \C\P^{n_k}$ in their proof of Theorem \ref{phong-sturm-theorem} was what led us to the geometric relation between the two Hessians $\D^*\D$ and $P^*P$ described in Proposition~\ref{two-hessians-equation} below. 

\subsection{The Hessians for a complex projective submanifold}\label{hessians-complex-submfd}

We begin by recalling some standard material on the second fundamental form. In part, we follow the treatment in \cite{griffiths-harris}. Let $E \to Y$ be a holomorphic vector bundle over a complex manifold; suppose that $S \to E$ is a holomorphic sub-bundle, with quotient $Q$. Suppose moreover that we have a Hermitian metric in $E$. We write $\nabla^E$ for the corresponding Chern connection. By restriction, we obtain a Hermitian metric on $S$. By identifying $Q$ with the orthogonal complement of $S$ we also obtain a Hermitian metric on $Q$. These metrics give us Chern connections $\nabla^S$ and $\nabla^Q$ in $S$ and $Q$ respectively.

It is straightforward to check that $\nabla^S$ is the composition of $\nabla^E$ followed by projection to $S$. Meanwhile the composition of $\nabla^E$ with projection to $Q$ gives the second fundamental form $F$ of $S \to E$:
\[
C^\infty(S) \stackrel{\nabla^E}{\to} \Omega^1(E) \to \Omega^1(Q).
\]
$F$ is tensorial, being given by a $1$-form with values in $\Hom(S,Q)$. Moreover, since $S$ is a holomorphic sub-bundle, the $(0,1)$-component of $\nabla^E$ leaves $S$ invariant; hence $F$ is a section of  $\Lambda^{1,0} \otimes \Hom (S, Q)$.

$F$ measures the failure of $S$ to be a parallel sub-bundle of $E$. We can also approach the second fundamental form by considering the failure of $S^\perp$, the orthogonal complement of $S$, to be a \emph{holomorphic} sub-bundle of $E$. To do this, we consider the composition of $\delb_E$ with orthogonal projection to $S$ to obtain a map:
\[
C^\infty(S^\perp) \stackrel{\delb_E}{\to}
\Omega^{0,1}(E) \to \Omega^{0,1}(S).
\]
Again, this is tensorial, given by a section of $\Lambda^{0,1}\otimes \Hom(S^\perp, S)$. Moreover, if we identify $Q \cong S^\perp$ in the obvious way, then this section is $F^*$, i.e., the $(0,1)$-form with values in $\Hom (Q,S)$ obtained by conjugating the $(1,0)$-form factor of $F$ and taking the adjoint of the $\Hom(S,Q)$ factor. 

We will exploit a standard calculation which relates these objects to the curvatures of $E$, $S$ and $Q$. We write $F \wedge F^* \in \Lambda^{1,1} \otimes \End(Q)$ for the wedge product on forms tensored with the composition on endomorphisms. Similarly, $F^* \wedge F \in \Lambda^{1,1} \otimes \End(S)$. In the following equations, $R(S)$, $R(Q)$ and $R(E)$ denote the curvatures of the Chern connections in $S$, $Q$ and $E$ respectively. The $C^\infty$-splitting $E = S \oplus Q$ induces a splitting
\[
\End(E)
=
\End(S) \oplus \Hom(S, Q) \oplus \Hom (Q,S) \oplus \End(Q).
\]
We write $R(E)|_S$ and $R(E)|_Q$ for the components of $R(E)$ in $\End(S)$ and $\End(Q)$ respectively. The following is proved, for example, on page 78 of \cite{griffiths-harris}.
\begin{eqnarray}
F^* \wedge F
	&=&
		R(S) - R(E)|_S,
		\label{2ff-1}\\
F\wedge F^*
	&=& R(Q) - R(E)|_Q
		\label{2ff-2}
\end{eqnarray}

Next we assume that $Y$ carries a Hermitian metric. This gives an identification $\Lambda^{1,0} \cong (\Lambda^{0,1})^*$. Using this, we can interpret $F$ as a homomorphism 
\[
F \colon \Lambda^{0,1}\otimes S \to Q.
\] 
We can also regard $F^*$ as a homomorphism 
\[
F^* \colon Q \to \Lambda^{0,1} \otimes S.
\]
The fact we need is that these two maps are adjoints with respect to the fibrewise Hermitian metrics on $\Lambda^{0,1}\otimes S$ and $Q$, which can easily be verified directly from the definitions.

In what follows, we will be interested in the two following compositions:
\begin{equation}
\label{F*F}
\xymatrix{
\Lambda^{0,1} \otimes S
\ar[r]^-{F}
&
Q
\ar[r]^-{F^*}
&
\Lambda^{0,1} \otimes S
}
\end{equation}
\begin{equation}
\label{FF*}
\xymatrix{
Q
\ar[r]^-{F^*}
&
\Lambda^{0,1} \otimes S
\ar[r]^-{F}
&
Q
}
\end{equation}
Via $\Lambda^{1,0} \cong (\Lambda^{0,1})^*$, we may think of (\ref{F*F}) as defining an element of $\Lambda^{1,1}\otimes \End(S)$. If we choose an orthonormal basis $e_1, \ldots, e_n$ for $\Lambda^{1,0}$, we can write $F = \sum e_j \otimes F_j$ for $F_j \in \Hom(S, Q)$. Then $F^* = \sum \bar e_j \otimes F_j^*$ and we have
\begin{eqnarray*}
F^*F &=& \sum_{i,j} e_i \wedge \bar e_j \otimes F_j^*F_i,\\
FF^* &=& \sum_i F_i F_i^*.
\end{eqnarray*} 
In other words, under the identification $\End(\Lambda^{0,1} \otimes S) \cong \Lambda^{1,1}\otimes \End(S)$, $F^*F $ is identified with $- F^* \wedge F$. (To see that the sign here is correct, observe that $F^*F$ is identified with a \emph{positive} $(1,1)$-form with values in $\End(S)$, on the other hand, $F^*\wedge F$ is negative; see the discussion on pages 78--79 of \cite{griffiths-harris}.) Meanwhile, $FF^* = \tr_Y(F\wedge F^*)$, where we have taken the trace of the $\Lambda^{1,1}$-component of $F\wedge F^*$ via the Hermitian metric on $Y$.

After this somewhat lengthy digression, we now return to the case of a complex submanifold $Y \subset \C\P^m$. We apply the above discussion to the bundles $E= T\C\P^m|_Y$, $S = TY$ and the normal bundle $N$ as the quotient. The Hermitian metrics are all induced by the Fubini--Study metric on $\C\P^m$. Throughout we freely identify $N$ with the orthogonal complement of $TY$.

Recall that in this situation, we have two linear maps:
\[
P \colon i \u(m+1) \to C^\infty(N),\quad\quad
P(A) = \xi_A^\bot.
\]
\[
H \colon i\u(m+1) \to C^\infty(Y,\R), \quad\quad
H(A) = \tr(A\mu)|_Y
\]
Using the Killing form on $i\u(m+1)$ and the $L^2$-inner-product defined by the Fubini--Study metric on both ranges, we can define the adjoints $P^*$ and $H^*$.

\begin{proposition}
\label{two-hessians-equation}
Let $Y \subset \C\P^m$ be a complex submanifold. Then
\[
H^*\D^*\D H = P^* F F^* P
\] 
where $\D$ is defined by the restriction of the Fubini--Study metric to $Y$ and $F F^*$ is the endomorphism of $N$ built from the second fundamental form of $X$, as in (\ref{FF*}).
\end{proposition}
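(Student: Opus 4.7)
The plan is to establish the stronger operator identity
\[
\mathcal{D} H = -F^* P,
\]
from which the proposition follows immediately by taking adjoints, since $(F^*)^* = F$ and composition gives $H^* \mathcal{D}^* \mathcal{D} H = (-P^*F)(-F^*P) = P^*FF^*P$.

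To prove $\mathcal{D}H = -F^*P$, I would first identify the Hamiltonian vector field of $H(A)|_Y$ with respect to the induced Kähler form on $Y$. On $\C\P^m$ itself, $\xi_A$ is the Hamiltonian vector field of $H(A)$ by the very definition of $\mu$ (as noted just after equation (\ref{H})). Restricting, $\iota_{\xi_A|_Y}\omega_{\mathrm{FS}}|_Y = dH(A)|_Y$. Because $Y$ is a complex submanifold, $\omega_{\mathrm{FS}}|_Y$ is $g$-orthogonal with respect to the splitting $T\C\P^m|_Y = TY \oplus N$, so contracting with $\xi_A^\perp \in N$ kills the $TY$-components. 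Hence $\iota_{\xi_A^\top}\omega_Y = dH(A)|_Y$, which identifies $\xi_A^\top$ as the Hamiltonian vector field $v_{H(A)}$ on $Y$. Consequently,
\[
\mathcal{D} H(A) = \overline{\partial}^{\,TY}(\xi_A^\top).
\]

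The key step is to compute this $\overline{\partial}$ using the second fundamental form. Set $E = T\C\P^m|_Y$, $S = TY$, $Q \cong S^\perp = N$. Since $\xi_A$ is holomorphic on $\C\P^m$, its restriction satisfies $\overline{\partial}^E(\xi_A|_Y) = 0$. Decompose $\xi_A|_Y = \xi_A^\top + \xi_A^\perp$ and apply $\overline{\partial}^E$ to each piece. Because $S$ is a holomorphic subbundle of $E$, for a section of $S$ one has $\overline{\partial}^E s = \overline{\partial}^S s$ with values in $\Omega^{0,1}(S)$. For a section of $S^\perp$, by the definition of the second fundamental form recalled in the paragraph containing (\ref{FF*}), the $S$-component of $\overline{\partial}^E$ is precisely $F^*$, while the $Q$-component is $\overline{\partial}^Q$. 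Projecting $0 = \overline{\partial}^E(\xi_A^\top) + \overline{\partial}^E(\xi_A^\perp)$ onto $\Omega^{0,1}(S)$ yields
\[
\overline{\partial}^{\,TY}(\xi_A^\top) + F^*(\xi_A^\perp) = 0,
\]
which is exactly $\mathcal{D}H(A) = -F^*P(A)$. (As a sanity check, the $Q$-component of the same equation reads $\overline{\partial}^Q \xi_A^\perp = 0$, the well-known fact that the normal projection of a holomorphic vector field is a holomorphic section of $N$.)

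The only delicate point in the write-up is checking that $F^*$ in the composition (\ref{F*F})--(\ref{FF*}) is genuinely the $L^2$-adjoint (with respect to the Fubini--Study metric and induced volume form on $Y$) of the tensorial map $F$ viewed as $\Lambda^{0,1}\otimes S \to Q$; this is the fibrewise adjoint property already recorded in the preamble above equation (\ref{F*F}), and integrates over $Y$ to the $L^2$-adjoint statement. Granted this, taking $L^2$-adjoints of $\mathcal{D}H = -F^*P$ and composing gives $H^*\mathcal{D}^*\mathcal{D}H = P^*FF^*P$, completing the proof. The main obstacle is essentially bookkeeping: keeping the sign and orientation conventions of the second fundamental form consistent with the conventions for $\mathcal{D}$ and for the Hamiltonian vector field of a real function; the geometric content is entirely encoded in the single decomposition $\overline{\partial}^E(\xi_A^\top + \xi_A^\perp) = 0$.
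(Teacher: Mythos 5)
Your argument is essentially the paper's: use holomorphicity of $\xi_A$, split $\delb(\xi_A^\top+\xi_A^\perp)=0$ along $TY\oplus N$, and identify the $TY$-component of $\delb\xi_A^\perp$ with $F^*$ acting on $\xi_A^\perp = P(A)$. The one genuine error is the identification of $\xi_A^\top$ with the Hamiltonian vector field of $H(A)|_Y$. Under the paper's conventions, $H(A)=\tr(A\mu)$ is the Hamiltonian of the \emph{Killing} field generated by the skew-Hermitian matrix $iA$, which is $J\xi_A$, not of the holomorphic field $\xi_A$ generated by the Hermitian matrix $A$ itself; equivalently $\xi_A=\nabla H(A)$ on $\C\P^m$, so $\xi_A^\top$ is the \emph{gradient} of $H(A)|_Y$ (this is exactly how it is used in the proof of Proposition \ref{hessian-balancing-identity}), while the Hamiltonian field of the restriction is $J\xi_A^\top$. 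Since the gradient and the Hamiltonian vector field of a nonconstant function are pointwise orthogonal, no choice of sign convention rescues your premise $\iota_{\xi_A}\omegaFS = \diff H(A)$, and hence your claimed stronger identity $\D H=-F^*P$ is false as stated: the correct version is $\D H(A)=J\delb(\xi_A^\top)=-J\,F^*(P(A))$.

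This slip does not damage the proposition itself: $J$ is parallel (so it commutes with $\delb$, which is how it enters) and orthogonal, $J^*=J^{-1}$, so upon composing with adjoints the factor cancels, $H^*\D^*\D H = P^*F\,J^{-1}J\,F^*P = P^*FF^*P$, which is precisely how the paper concludes. So your proof becomes correct, and coincides with the paper's, once the factor of $J$ is inserted in the intermediate identity; the remaining ingredients you flag (holomorphicity of $TY$ forcing $\delb\xi_A^\perp$ to take values in $TY$, so that it equals $F^*(\xi_A^\perp)$, and the fibrewise adjointness of $F$ and $F^*$ integrating to the $L^2$ adjoint statement) are exactly the ones the paper relies on.
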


\begin{proof}
Let $A \in i\u(m+1)$. By definition, $\D \left(H(A)\right) = J \delb \left(\nabla H(A)\right) = J \delb (\xi_A^\top)$. But $\xi_A$ is a holomorphic section of $T\C\P^m$ and so $\delb (\xi_A^\top) = - \delb (\xi_A^\perp)$. Now $TY$ is a holomorphic sub-bundle of $T\C\P^N$, so $\delb (\xi_A^\perp) $ already takes values in $TY$. It follows that $\delb(\xi_A^\perp) = F^*(\xi_A^\perp)$ and hence $\D H(A) = - J F^*(P(A))$. Since $J^* = J^{-1}$ the result now follows.
\end{proof}

The proof may just be a matter of unwinding the definitions, but this result is absolutely crucial to what follows, since it gives the geometric relation between the two Hessians.

\subsection{Asymptotics of the second fundamental form}
\label{2ff-asymptotics}

We now turn to the asymptotic behaviour of the above picture. Recall that we have a positive Hermitian metric $h$ in $L \to X$ which gives embeddings $X \to \C\P^{n_k}$ via $L^2$-orthonormal bases of $H^0(X, L^k)$. Applying the discussion in \S\ref{hessians-complex-submfd} to these embeddings, we obtain objects $P_k, F_k, \D_k$ etc., where the subscript denotes the dependence on $k$. 

Our goal is to understand the asymptotic behaviour of $F_kF_k^*$, but it proves easier to go first via $F_k^*F_k \in \End(\Lambda^{0,1} \otimes TX)$. We will prove that asymptotically $F_k^*F_k$ is $4\pi S_k$ for a certain self-adjoint projection $S_k$. To define $S_k$, we use the metric  $\omega_k$ to identify:
\[
\iota_k \colon \Lambda^{0,1}\otimes TX \to TX \otimes TX.
\]
Write $\pi \colon TX \otimes TX \to TX \otimes TX$ for the projection onto the symmetric tensors; for $\alpha, \beta \in T^*X$ and $T \in TX\otimes TX$, 
\[
\pi(T)(\alpha, \beta) = \frac{1}{2} T(\alpha, \beta) + \frac{1}{2}T(\beta, \alpha).
\]
We then set $S_k = \iota_k^{-1}\circ \pi \circ \iota_k$.
 
\begin{lemma}
$\| F^*_kF_k - 4\pi S_k \|_{C^0(\mathrm{op})} = O(k^{-1}).$ 
Here we use the $C^0$-norm on sections of~$\,\End(\Lambda^{0,1} \otimes TX)$ associated to the fibrewise operator norm.
\end{lemma}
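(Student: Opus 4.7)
The plan is to reduce the estimate to two separate claims by appealing to the structural formulas of \S\ref{hessians-complex-submfd}. Applying equation~(\ref{2ff-1}) to the holomorphic sub-bundle $TX$ of $T\C\P^{n_k}|_X$, together with the identification $F^*F \leftrightarrow -F^*\wedge F$ of endomorphisms of $\Lambda^{0,1}\otimes TX$ with elements of $\Lambda^{1,1}\otimes \End(TX)$, one gets
\[
F_k^*F_k = R(T\C\P^{n_k})|_{TX} - R(TX, \omega_k),
\]
both sides being viewed as endomorphisms of $\Lambda^{0,1}\otimes TX$ equipped with the Hermitian structure induced by $\omega_k$. The lemma thus reduces to two claims: (i) the pointwise identity $R(T\C\P^{n_k})|_{TX} = 4\pi S_k$; and (ii) the bound $\|R(TX, \omega_k)\|_{C^0(\mathrm{op})} = O(k^{-1})$.

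For (i) I would exploit the fact that Fubini--Study has constant holomorphic sectional curvature. At any $x \in X$, pick an $\omega_k$-orthonormal basis $e_1, \dots, e_n$ of $T^{1,0}_xX$ and, using $\omegaFS|_X = \omega_k$, extend it to an $\omegaFS$-orthonormal basis $e_1, \dots, e_{n_k}$ of $T^{1,0}_x\C\P^{n_k}$. In such a basis the Chern curvature of Fubini--Study at $x$ has components
\[
R(T\C\P^{n_k})^p{}_{q i \bar j} = 2\pi\bigl( \delta^p_q \delta_{ij} + \delta^p_i \delta_{qj} \bigr),
\]
the constant $2\pi$ being fixed by the normalisation $\Ric = 4\pi\omega$ on $\C\P^1$ recorded in the conventions section. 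Only components with $p,q,i,j \leq n$ contribute to $R(T\C\P^{n_k})|_{TX}$ as an endomorphism of $\Lambda^{0,1}_x \otimes T^{1,0}_xX$, and unwinding the identification one finds that $d\bar z^j \otimes \partial_q$ is sent to $2\pi(d\bar z^j \otimes \partial_q + d\bar z^q \otimes \partial_j)$. Direct inspection of the definition of $S_k$ as metric symmetrisation shows this equals $4\pi S_k(d\bar z^j \otimes \partial_q)$.

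For (ii) the key point is that the Chern curvature of $TX$ is unchanged when its Hermitian metric is multiplied by a positive constant, so $R(TX, k\omega) = R(TX, \omega)$. Combined with $\omega_k/k - \omega = O_{C^\infty}(k^{-2})$ from Theorem~\ref{asymptotics-Bergman} and smooth dependence of the Chern curvature on the metric, this gives $R(TX, \omega_k) = R(TX, \omega) + O(k^{-2})$ as a $(1,1)$-form with values in $\End(TX)$, bounded uniformly in $k$ with respect to $\omega$. The gain of $k^{-1}$ then arises from passing to the induced endomorphism of $\Lambda^{0,1}\otimes TX$ with the $\omega_k$-metric: in an $\omega_k$-orthonormal basis, the inverse-metric contraction pairing the $(1,0)$-factor of the form with $\Lambda^{0,1}$ contributes a factor of $k^{-1}$, while the norm on $\Lambda^{0,1}\otimes TX$ itself is invariant under $\omega \mapsto k\omega$, since the two tensor factors scale inversely.

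The main obstacle I anticipate is pinning down the constant and sign in~(i). Several normalisations must cohere: the $1/(4\pi)$ in the definition~(\ref{mu}) of $\mu$, the convention $\Ric = 4\pi\omega$ on $\C\P^1$, the sign $F^*F \leftrightarrow -F^*\wedge F$ in the identification of endomorphisms with $(1,1)$-forms, and the particular definition of $S_k$ via symmetrisation inside $TX \otimes TX$. Once these are aligned the identity in (i) holds on the nose and the $O(k^{-1})$ estimate from (ii) produces the stated bound.
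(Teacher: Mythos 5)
Your proposal is correct and follows essentially the same route as the paper: reduce via the identity $F^*F \leftrightarrow -F^*\wedge F$ and equation (\ref{2ff-1}), identify the restricted Fubini--Study curvature exactly with $4\pi S_k$ using the constant-curvature formula with coefficient $2\pi$, and bound $R(TX,\omega_k)$ by $O(k^{-1})$ via the scale-invariance of the Chern curvature form together with the factor of $k^{-1}$ coming from raising an index with $\omega_k \approx k\omega$. The normalisation and sign checks you flag are exactly the points the paper's proof also pins down, and they come out as you expect.
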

\begin{proof}
Recall from \S\ref{hessians-complex-submfd} that under the identification of $\End(\Lambda^{0,1} \otimes TX)$ with $\Lambda^{1,1} \otimes \End(TX)$, $F^*_kF_k$ is identified with $- F^*_k \wedge F_k$. Now, by (\ref{2ff-1}), we have:
\[
F^*_k \wedge F_k =  R(TX, \omega_k) - R(T\C\P^m)|_{TX}.
\]
With our scaling conventions, the curvature tensor of $\C\P^m$ is given by
\[
R(u_1, \bar{u}_2, u_3, \bar{u}_4)
=
2\pi \left(
g(u_1, \bar u_2) g(u_3, \bar u_4) + g(u_1, \bar u_4) g(u_3, \bar u_2)
\right)
\]
where $g$ is the Fubini--Study Hermitian inner-product. In index notation, this reads
\[
R_{i \bar \jmath k \bar l}
=
2\pi\left(
g_{i \bar \jmath} g_{k \bar l} 
+ 
g_{i \bar l} g_{k \bar \jmath}
\right)
\]
Raising indices, to interpret this as a map $T\C\P^{n_k} \otimes T\C\P^{n_k} \to T\C\P^{n_k} \otimes T\C\P^{n_k}$, we obtain
\[
R_{i \phantom{j} k \phantom{l}}^{\phantom{i} j \phantom{k} l}
=
2\pi 
\left(
\delta_{i \phantom{j}}^{\phantom{i} j}
\delta_{k \phantom{l}}^{\phantom{k} l}
+
\delta_{i \phantom{l}}^{\phantom{i} l}
\delta_{k \phantom{j}}^{\phantom{k} j}
\right)
\]
In other words, $R(T\C\P^{n_k})|_{TX}$ corresponds to $4\pi S_k$.

It remains to show that, when we interpret it via $\omega_k$ as an endomorphism of $TX \otimes TX$, we have $\| R(TX, \omega_k)\|_{C^0(\mathrm{op})} = O(k^{-1})$. This follows from the fact that $\omega_k = k\omega + O(k^{-1})$. To see this, write $R_k \in \End(TX \otimes TX))$ for the curvature tensor $R(TX,\omega_k)$ with the index appropriately raised via $\omega_k$. Now, if $\omega_k$ were \emph{exactly} equal to $k\omega$, the curvature tensor thought of as an element of $\Lambda^{1,1}\otimes \End(TX)$ would be independent of $k$; raising the index with $\omega_k$ would introduce a factor of $k^{-1}$ and we would have $R_k = k^{-1}R_1$. The lower order terms in $\omega_k = k\omega + O(k^{-1})$ only affect this argument at $O(k^{-1})$.
\end{proof}

Let $T_k \in \End(N)$ denote the orthogonal projection in $N$ onto the image of $F_k \colon \Lambda^{0,1}\otimes TX \to N$.

\begin{lemma}
\label{control-FF*}
$\| F_k F^*_k - 4\pi T_k \|_{C^0(\mathrm{op})} = O(k^{-1})$. Here we use the $C^0$-norm on sections of $\End(N)$ associated to the fibrewise operator norm.
\end{lemma}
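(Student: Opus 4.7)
The plan is to exploit the classical fact that the second fundamental form of a complex submanifold, viewed as a bilinear map $TX \otimes TX \to N$, is \emph{symmetric}. Translated through the identification $\iota_k \colon \Lambda^{0,1}\otimes TX \to TX\otimes TX$ used to define $S_k$, this symmetry is precisely the statement $F_k = F_k S_k$: the second fundamental form annihilates the antisymmetric tensors in $\Lambda^{0,1}\otimes TX$.

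Given this, I will factor $F_k = B_k \iota^*$, where $\iota \colon \mathrm{im}(S_k) \hookrightarrow \Lambda^{0,1}\otimes TX$ denotes the isometric inclusion (so that $S_k = \iota \iota^*$) and $B_k = F_k \iota \colon \mathrm{im}(S_k) \to N$. Since $\iota^* \iota = \mathrm{id}_{\mathrm{im}(S_k)}$, this yields the two identities
\[
F_k F_k^* = B_k B_k^*, \qquad F_k^* F_k = \iota (B_k^* B_k) \iota^*.
\]
Combining the second identity with the previous lemma gives the pointwise bound $\| B_k^* B_k - 4\pi \,\mathrm{id}\|_{\mathrm{op}} = O(k^{-1})$.

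For $k$ sufficiently large, $B_k^* B_k$ is then uniformly bounded below by a positive constant; in particular, $B_k$ is fibrewise injective, $\mathrm{rank}(B_k) = \mathrm{rank}(S_k)$, and the polar decomposition $B_k = U_k |B_k|$ produces a partial isometry with $U_k^* U_k = \mathrm{id}_{\mathrm{im}(S_k)}$ and---crucially---$U_k U_k^* = T_k$, the projection onto $\mathrm{im}(B_k) = \mathrm{im}(F_k)$. Writing $R_k = B_k^* B_k - 4\pi\,\mathrm{id}$, we have
\[
B_k B_k^* = U_k (B_k^* B_k) U_k^* = 4\pi T_k + U_k R_k U_k^*,
\]
and $\|U_k R_k U_k^*\|_{\mathrm{op}} \leq \|R_k\|_{\mathrm{op}} = O(k^{-1})$, yielding the required estimate.

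The conceptual heart of the argument, and the main obstacle, is recognising that the symmetry $F_k = F_k S_k$ must be used in its \emph{exact} form, not merely as an asymptotic statement. From the preceding lemma alone one can only deduce $\| F_k|_{\ker S_k}\|_{\mathrm{op}} = O(k^{-1/2})$, which would allow $F_k$ to have extra singular values of order $k^{-1/2}$. Since $T_k$ counts every nonzero singular value with multiplicity one, such phantom singular values would contribute eigenvalues of magnitude close to $4\pi$ to $F_kF_k^* - 4\pi T_k$, destroying the $O(k^{-1})$ bound. It is precisely the exact symmetry of the complex second fundamental form---a pointwise fact that holds for any Hermitian holomorphic embedding---which forces $\mathrm{rank}(F_k) = \mathrm{rank}(S_k)$ for large $k$ and makes the estimate sharp.
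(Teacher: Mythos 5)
Your proof is correct, and it is genuinely more explicit than the paper's. The paper disposes of the lemma in three lines: $\ker F_kF_k^* = \ker T_k$; the nonzero eigenvalues of $F_kF_k^*$ and $F_k^*F_k$ coincide; and these are $4\pi + O(k^{-1})$ ``by the previous Lemma''. That last step is exactly where your ``phantom singular value'' concern lives: the bound $\|F_k^*F_k - 4\pi S_k\|_{\mathrm{op}} = O(k^{-1})$ by itself only confines the spectrum of $F_k^*F_k$ to $[0,Ck^{-1}]\cup[4\pi - Ck^{-1}, 4\pi + Ck^{-1}]$, and a \emph{nonzero} eigenvalue in the first interval would have its eigenvector inside the image of $F_k$, where $T_k$ acts as the identity, producing an eigenvalue of $F_kF_k^* - 4\pi T_k$ of size close to $4\pi$. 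The paper leaves tacit the fact that excludes this, which is precisely your identity $F_k = F_kS_k$: the symmetry of the second fundamental form of a complex submanifold of a K\"ahler manifold, which under $\iota_k$ (a complex-linear identification, raising the index with $g^{i\bar\jmath}$, so no sesquilinearity issue intervenes) says exactly that $F_k$ annihilates antisymmetric tensors. Your factorisation $F_k = B_k\iota^*$, the resulting bound $\|B_k^*B_k - 4\pi\,\mathrm{id}\|_{\mathrm{op}} = O(k^{-1})$, and the polar-decomposition step $F_kF_k^* = U_k(B_k^*B_k)U_k^*$ with $U_kU_k^* = T_k$ are all sound, and the constants are uniform over $X$ because they come only from the preceding lemma, so the $C^0(\mathrm{op})$ statement follows. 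Two remarks on what each route buys: your argument can be compressed into the paper's by noting that $F_k = F_kS_k$ gives $\rank F_k \leq \rank S_k$, while eigenvalue perturbation forces the top $\rank S_k$ eigenvalues of $F_k^*F_k$ to be $4\pi + O(k^{-1})$, so $\rank F_k = \rank S_k$ and no small nonzero eigenvalues exist; conversely, in the one place the lemma is used (Proposition \ref{P*P-controls-D}) only the upper bound $F_kF_k^* \leq (4\pi + Ck^{-1})\,\mathrm{id}_N$ is needed, and that part does follow from the preceding lemma together with the equality of nonzero spectra, so the downstream results are safe on either reading.
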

\begin{proof}
Clearly $\ker F_kF^*_k = \ker T_k$. Since $F_kF^*_k$ is self-adjoint, it remains to show that all of its non-zero eigenvalues are $4\pi + O(k^{-1})$. But the non-zero eigenvalues of $F_kF_k^*$ and $F^*_kF_k$ are identical (the eigenvectors are matched up by $F^*_k$) hence the result follows from the previous Lemma.
\end{proof}

The conclusion of this discussion is that $\|P_k(A)\|$ together with the $L^2$-norm of $H(A)$ controls the $L^2$-norm of $\D H(A)$:

\begin{proposition}
\label{P*P-controls-D}
There is a constant $C$ such that for all $A \in i\u(n_k+1)$,
\[
\| \D H(A) \|^2_{L^2(\omega)}
\leq
\left(4\pi k^{2-n} + C k^{1-n} \right) 
\| P_k(A) \|^2
+
C k^{-2} \| H(A)\|^2_{L^2(\omega)}.
\]
\end{proposition}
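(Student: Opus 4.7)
The plan is to combine Proposition~\ref{two-hessians-equation}, applied to the Kodaira embedding $X \to \C\P^{n_k}$, with the two approximation results Lemma~\ref{control-FF*} and Lemma~\ref{L2k(Dk)-controls-L2(D)}. First I would apply Proposition~\ref{two-hessians-equation} to the metric $\omega_k$ and the sub-bundle $TX \subset T\C\P^{n_k}|_X$, which gives the pointwise identity $\D_k H(A) = -J F_k^*(P_k(A))$, where $\D_k$ is the operator defined by $\omega_k$. Since $J$ is a fibrewise isometry, taking $L^2(\omega_k)$-norms yields
\[
\|\D_k H(A)\|^2_{L^2(\omega_k)}
= \|F_k^*(P_k(A))\|^2_{L^2(\omega_k)}
= \bigl\langle P_k(A),\, F_k F_k^*(P_k(A)) \bigr\rangle_{L^2(\omega_k)}.
\]

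Next I would invoke Lemma~\ref{control-FF*}, which gives $F_k F_k^* = 4\pi T_k + O(k^{-1})$ in fibrewise operator norm, with $T_k$ an orthogonal projection in $N$. Since $\langle v, T_k v \rangle \leq |v|^2$ pointwise, integrating against $\omega_k^n/n!$ and applying the Cauchy--Schwarz bound on the $O(k^{-1})$ correction gives
\[
\|\D_k H(A)\|^2_{L^2(\omega_k)} \leq \bigl(4\pi + C k^{-1}\bigr) \|P_k(A)\|^2.
\]
It then remains only to transfer the left-hand side from the pair $(\D_k, \omega_k)$ to $(\D, \omega)$. For this I would apply Lemma~\ref{L2k(Dk)-controls-L2(D)} with $f = H(A)$ and rearrange to obtain $\|\D H(A)\|^2_{L^2(\omega)}$ in terms of $\|\D_k H(A)\|^2_{L^2(\omega_k)}$ plus a correction proportional to $\|H(A)\|^2_{L^2(\omega)}$. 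A short expansion shows that the resulting coefficient in front of $\|P_k(A)\|^2$ simplifies to
\[
\frac{4\pi + C k^{-1}}{k^{n-2}(1 - C k^{-2})} = 4\pi k^{2-n} + O(k^{1-n}),
\]
whilst the coefficient of $\|H(A)\|^2_{L^2(\omega)}$ is of order $k^{-2}$. Both match the statement of the proposition exactly.

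There is no serious analytical obstacle here: once Proposition~\ref{two-hessians-equation} and the two approximation Lemmas are in hand, the argument is essentially bookkeeping of constants and scalings. The only slightly delicate point is the $\|H(A)\|^2_{L^2(\omega)}$ term on the right: it arises because Lemma~\ref{L2k(Dk)-controls-L2(D)} relies on the overdetermined-elliptic estimate for $\D$, which has a non-trivial kernel (the constants), so one cannot invert cleanly. This low-order correction is exactly what the proposition allows. Preserving the sharp leading constant $4\pi$ requires using the pointwise operator-norm bound in Lemma~\ref{control-FF*} together with the fact that $T_k$ is a genuine projection, so that no additional contribution arises from components of $P_k(A)$ lying outside the image of $F_k$.
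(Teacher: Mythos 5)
Your proposal is correct and follows essentially the same route as the paper: apply Proposition~\ref{two-hessians-equation} at the $k^{\text{th}}$ embedding to write $\|\D_k H(A)\|^2_{L^2(\omega_k)} = \langle F_kF_k^*P_k(A), P_k(A)\rangle_{L^2(\omega_k)}$, bound this by $(4\pi + Ck^{-1})\|P_k(A)\|^2$ via Lemma~\ref{control-FF*}, and then convert to $(\D,\omega)$ using Lemma~\ref{L2k(Dk)-controls-L2(D)}, whose $\|f\|_{L^2(\omega)}$ term produces exactly the $Ck^{-2}\|H(A)\|^2_{L^2(\omega)}$ correction. Your bookkeeping of the constants matches the paper's statement.
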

\begin{proof}
By Proposition \ref{two-hessians-equation} and Lemma \ref{control-FF*}:
\begin{eqnarray*}
\| \D_k H(A)\|^2_{L^2(\omega_k)}
	&=&
		\langle F_kF^*_k P_k(A), P_k(A) \rangle_{L^2(\omega_k)}\\
	&=&
		\left\langle \left(4\pi T_k + O(k^{-1})\right)P_k(A),
			 P_k(A) \right\rangle_{L^2(\omega_k)}\\
	&\leq&
		\left(4\pi + Ck^{-1} \right)
			\|P_kA\|^2,
\end{eqnarray*}
where in the second line $T_k$ is the projection operator of Lemma \ref{control-FF*}. The result now follows from Lemma \ref{L2k(Dk)-controls-L2(D)} which shows how both $\| \D_k f\|^2_{L^2(\omega_k)}$ and $\| f\|^2_{L^2(\omega)}$ control $\| \D f \|^2_{L^2(\omega)}$.
\end{proof}

\subsection{Completing the proof of the lower bound}
\label{lower-bound-proof-section}

We will now move to finish the proof of Proposition \ref{lower-bound-result}. We recall our assumptions: that $\lambda_r < \lambda_{r+1}$ and that the $r^{\text{th}}$-inductive hypotheses hold. Recall also that $E_r$ denotes the span of the $\D^*\D$-eigenspaces with eigenvalue $\lambda \leq \lambda_r$ whilst $F_{r,k}$ denotes the span of the $P^*_kP_k$-eigenspaces with eigenvalue $\nu \leq \nu_{r,k}$. Since $\lambda_{r+1} > \lambda_r$ we have that $\dim E_{r} = r+1$. We fix an orthonormal basis $\phi_0, \ldots , \phi_r$ of $E_{r}$ in which $\phi_j$ is a $\lambda_j$-eigenvector of $\D^*\D$. Now we let $A_{j,k}$ denote the points of $i\u(n_k+1)$  furnished by the inductive hypothesis I3, for which $H(A_{j,k})$ approaches $\phi_j$. More precisely, let $p<q\leq r$ be such that 
\[
\lambda_{p-1} < \lambda_p = \cdots = \lambda_q < \lambda_{q+1}
\]
Write $F_{p,q,k} \subset i\u(n_k+1)$ for the span of the $\nu_{j,k}$-eigenspaces of $P^*_kP_k$ with $p\leq j\leq q$. Then, for $p\leq j \leq q$ we set $A_{j,k}$ to be the point of $F_{p,q,k}$ with $H(A_{j,k})$ closest in $L^2$ to $\phi_j$. 

\begin{lemma}\label{variation_nu_r+1}
Let $W_k \subset F_{r,k}$ denote the span of the $A_{j,k}$. Then
\[
\nu_{r+1,k}
\geq
\min_{B \in W_k^\perp}
\frac{\|P_kB\|^2}{\tr (B^2)}
\]
\end{lemma}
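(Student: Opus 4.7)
The plan is to obtain this as a direct consequence of the Courant--Fischer max-min characterisation of eigenvalues, applied to the Hermitian endomorphism $P_k^*P_k$ of the inner-product space $(i\u(n_k+1), \tr(\cdot\, \cdot))$. The Rayleigh quotient of $P_k^*P_k$ is
\[
R(B) = \frac{\langle P^*_kP_k B, B\rangle}{\tr(B^2)} = \frac{\|P_k B\|^2}{\tr(B^2)},
\]
and the standard variational formula reads
\[
\nu_{r+1,k} \;=\; \max_{\substack{W\subset i\u(n_k+1)\\ \dim W \,=\, r+1}} \;\min_{\,0\neq B \in W^\perp} R(B).
\]

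First I would observe that $W_k$ is, by construction, the span of the $r+1$ matrices $A_{0,k}, \ldots, A_{r,k}$, so $\dim W_k \leq r+1$. If $\dim W_k = r+1$ then the desired inequality is exactly what the max-min formula gives when the particular subspace $W_k$ is used as a candidate on the right-hand side. If $\dim W_k < r+1$, extend $W_k$ to any subspace $W'_k$ of dimension exactly $r+1$; since $W'_k \supset W_k$ implies $(W'_k)^\perp \subset W_k^\perp$, one has
\[
\min_{0\neq B \in W_k^\perp} R(B) \;\leq\; \min_{0\neq B \in (W'_k)^\perp} R(B) \;\leq\; \nu_{r+1,k},
\]
the last inequality again being Courant--Fischer applied to $W'_k$. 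In either case the stated bound follows.

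There is no obstacle here: the lemma is purely a restatement of the variational principle together with the fact that a set spanned by $r+1$ vectors has dimension at most $r+1$. Its role in the sequel is to reduce the problem of bounding $\nu_{r+1,k}$ below to estimating $\|P_k B\|^2/\tr(B^2)$ for matrices $B$ orthogonal to the finite-dimensional subspace $W_k$, which by the inductive hypothesis I3 is (to leading order) the quantised image of the $\D^*\D$-invariant subspace $E_r$; subsequent sections will combine this with Proposition~\ref{P*P-controls-D} and Lemma~\ref{L2k(Dk)-controls-L2(D)} to produce the desired lower bound.
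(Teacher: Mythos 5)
Your proof is correct, and it takes a genuinely lighter route than the paper. You invoke the Courant--Fischer max--min characterisation for an \emph{arbitrary} subspace of dimension at most $r+1$: since $W_k$ is spanned by $r+1$ matrices, $\dim W_k \leq r+1$, and after extending to a subspace of dimension exactly $r+1$ if necessary, the max--min formula (with the correct indexing, $\nu_{r+1,k}$ being the $(r+2)^{\text{nd}}$ smallest eigenvalue) immediately gives the bound; no further structure of $W_k$ is needed. The paper instead spends most of its proof showing that $\dim W_k = r+1$: it uses the inductive hypotheses I2 and I3 to show $\tr(A_{i,k}A_{j,k})$ is of strictly lower order than $\tr(A_{i,k}^2) = 16\pi^2 k^n + O(k^{n-1/2})$ for $i \neq j$, so the $A_{j,k}$ are linearly independent, and then uses the fact that $W_k$ is a $P_k^*P_k$-invariant subspace (being spanned by eigenvectors) to conclude that the smallest eigenvalue of the restriction of $P_k^*P_k$ to $W_k^\perp$ is at most $\nu_{r+1,k}$, finishing with the variational characterisation. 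Your argument buys simplicity and generality (no invariance, no linear independence, no quantitative estimates); the paper's argument buys the additional fact that $W_k$ is genuinely $(r+1)$-dimensional, which feeds the remark following the lemma that, once $\nu_{r+1,k} > \nu_{r,k}$ is established, $W_k = F_{r,k}$ and the inequality is in fact an equality. For the inequality as stated, your proof is complete as written.
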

\begin{proof}
We begin by proving that $\dim W_k = r+1$. To see this we apply the inductive hypothesis I2, which says that there is a constant $C$ such that
\[
\left| \tr(A_{i,k}A_{j,k}) - 16\pi^2k^n\langle H(A_{i,k}), H(A_{j,k}) \rangle_{L^2(\omega)}\right|
\leq C k^{-1} \tr(A^2_{i,k})^{1/2}\tr(A^2_{j,k})^{1/2}.
\]
Now, $H(A_{j,k}) = \phi_j + O(k^{-1/2})$ in $L^2_2(\omega)$ and so in particular in $L^2(\omega)$. Moreover, the $\phi_j$ are an orthonormal basis for $E_r$; this gives 
\begin{eqnarray*}
\tr(A_{i,k}^2) &=& 16\pi^2k^n + O(k^{n-1/2}),\\
\tr(A_{i,k}A_{j,k}) &=& O(k^{n-1/2}) \ \text{for }i \neq j.
\end{eqnarray*}
Since the inner-products of pairwise distinct $A_{i,k}$ are of strictly lower order than the lengths of the vectors themselves it follows that they are linearly independent.

So $W_k$ is an $(r+1)$-dimensional $P^*_kP_k$-invariant subspace. It follows that the minimal eigenvalue of $P^*_kP_k$ on the orthogonal complement $W_k^\perp$ is at most the $(r+2)^\text{th}$-eigenvalue, namely $\nu_{r+1,k}$. The result now follows from the variational characterisation of eigenvalues.
\end{proof}

We remark in passing that once we have proved Proposition \ref{lower-bound-result} and hence $\nu_{r+1,k} > \nu_{r,k}$ it will follow that $\dim F_{r,k} = r+1$; hence $W_k = F_{r,k}$ and the inequality in Lemma \ref{variation_nu_r+1} will be seen to be an equality.

The next important step in the proof is to show that if $B \in W_{k}^\perp$ then to highest order $H(B)$ lies in $E_{r}^\perp$. This is the content of the following result.

\begin{proposition}
\label{nu-espaces-converging}
Let $\psi \in L^2_2$ and let $M_k \in i\u(n_k+1)$ be a sequence of $P^*_kP_k$-eigenvectors satisfying the two conditions:
\begin{enumerate}
\item
$\tr(M_k^2) = 16\pi^2k^n + O(k^{n-1/2})$.
\item
$\|H(M_k) - \psi \|^2_{L^2_2} = O(k^{-1})$.
\end{enumerate}
Then there is a constant $C$ such that for all $B\in i\u(n_k+1)$ with $\tr(BM_k) = 0$, 
\[
\left| \langle H(B), \psi \rangle_{L^2(\omega)} \right|^2 \leq C k^{-n-1} \tr(B^2).
\]
\end{proposition}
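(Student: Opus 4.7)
The strategy is to integrate the pointwise identity of Lemma~\ref{pointwise-identity} with arguments $B$ and $M_k$ against $\omega_k^n/n!$, producing
\[
\tr(BM_k\bar\mu_k) = 4\pi\langle H(B), H(M_k)\rangle_{L^2(\omega_k)} + \langle\nabla H(B), \nabla H(M_k)\rangle_{L^2(\omega_k)} + \langle P_k(B), P_k(M_k)\rangle,
\]
where $\xi_A = \nabla H(A) + P_k(A)$ is the splitting into tangential and normal parts (gradient and inner products taken with respect to $\omega_k$). From this identity one extracts a bound on $\langle H(B), H(M_k)\rangle_{L^2(\omega_k)}$, then converts to $L^2(\omega)$ and replaces $H(M_k)$ by $\psi$.

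The normal cross-term vanishes: by the defining adjoint relation and the eigen-equation $P_k^*P_k M_k = \nu_{j,k}M_k$, one has $\langle P_k(B), P_k(M_k)\rangle = \nu_{j,k}\tr(BM_k) = 0$. The trace term is handled by Lemma~\ref{bound-trABmu} together with the orthogonality $\tr(BM_k)=0$ and hypothesis~1, yielding
\[
|\tr(BM_k\bar\mu_k)| \leq Ck^{-1}\tr(B^2)^{1/2}\tr(M_k^2)^{1/2} = O(k^{n/2-1})\tr(B^2)^{1/2}.
\]
For the gradient cross-term, applying Lemma~\ref{pointwise-identity} with both arguments equal to $B$ gives $|\xi_B|_{\mathrm{FS}}^2 \leq \tr(B^2\mu)$ pointwise on $X$, so integration and Lemma~\ref{bound-trABmu} yield $\|\nabla H(B)\|_{L^2(\omega_k)} \leq C\tr(B^2)^{1/2}$. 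Applied instead to $M_k$, together with hypothesis~2 and the covector scaling $\|\alpha\|_{L^2(\omega_k)}^2 = k^{n-1}\|\alpha\|_{L^2(\omega)}^2(1+O(k^{-2}))$, we obtain $\|\nabla H(M_k)\|_{L^2(\omega_k)} = O(k^{(n-1)/2})$. Cauchy-Schwarz then bounds the gradient cross-term by $Ck^{(n-1)/2}\tr(B^2)^{1/2}$, which dominates the $\bar\mu_k$-trace term.

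Assembling these estimates gives $|\langle H(B), H(M_k)\rangle_{L^2(\omega_k)}| \leq Ck^{(n-1)/2}\tr(B^2)^{1/2}$. Converting to $L^2(\omega)$ via Lemma~\ref{L2k-L2-equivalence}, using $\|H(B)\|_{L^2(\omega)} \leq Ck^{-n/2}\tr(B^2)^{1/2}$ from Lemma~\ref{tr-controls-L2H} and $\|H(M_k)\|_{L^2(\omega)} = O(1)$ from hypothesis~2, produces
\[
|\langle H(B), H(M_k)\rangle_{L^2(\omega)}| \leq Ck^{-(n+1)/2}\tr(B^2)^{1/2}.
\]
Finally, replacing $H(M_k)$ by $\psi$ costs at most $\|H(B)\|_{L^2(\omega)}\,\|H(M_k)-\psi\|_{L^2} \leq Ck^{-n/2}\tr(B^2)^{1/2}\cdot O(k^{-1/2})$, of exactly the same order. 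Squaring gives the claim.

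The main obstacle is purely bookkeeping: the three error contributions (the $\bar\mu_k$-trace, the gradient cross-term, and the $\psi-H(M_k)$ correction) live naturally at different scales in $\omega$ versus $\omega_k$, and everything must be tracked precisely so that after passing to $L^2(\omega)$ the dominant terms land at exactly $k^{-(n+1)/2}\tr(B^2)^{1/2}$ and no worse. No individual estimate is deep, but the matching of powers is the crux.
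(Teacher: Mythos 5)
Your argument is correct, and it checks out numerically: the normal cross-term's real part is $\tr(B(P_k^*P_kM_k))=\nu_{j,k}\tr(BM_k)=0$, the $\bar\mu_k$-term is $O(k^{n/2-1})\tr(B^2)^{1/2}$ by Lemma~\ref{bound-trABmu}, your gradient bound $\|\diff H(B)\|_{L^2(\omega_k)}\leq C\tr(B^2)^{1/2}$, $\|\diff H(M_k)\|_{L^2(\omega_k)}=O(k^{(n-1)/2})$ gives the cross-term at $O(k^{(n-1)/2})\tr(B^2)^{1/2}$, and after Lemmas~\ref{L2k-L2-equivalence} and \ref{tr-controls-L2H} plus the $O(k^{-1/2})$ replacement of $H(M_k)$ by $\psi$, everything lands at $k^{-(n+1)/2}\tr(B^2)^{1/2}$, whose square is the claimed $Ck^{-n-1}\tr(B^2)$. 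Where you genuinely differ from the paper is in the treatment of the gradient cross-term: the paper does not estimate it by Cauchy--Schwarz at all, but integrates by parts to rewrite it as $\langle H(B),\Delta_k H(M_k)\rangle_{L^2(\omega_k)}$, absorbs it into $\Theta_k=H(M_k)+\frac{1}{4\pi}\Delta_kH(M_k)$, and uses Lemma~\ref{Lapk-Lap-control} (i.e.\ $\Delta_k\approx k^{-1}\Delta$) to show $\Theta_k=\psi+O(k^{-1/2})$ in $L^2(\omega)$. That device yields the sharper intermediate bound $|\langle H(B),\Theta_k\rangle_{L^2(\omega)}|\lesssim k^{-n/2-1}\tr(B^2)^{1/2}$, i.e.\ it shows the gradient term is actually subdominant; your cruder Cauchy--Schwarz makes the gradient term the dominant contribution at $k^{-(n+1)/2}$, but since both routes are ultimately capped at that same order by the $O(k^{-1/2})$ accuracy of hypothesis~2, the final statement is unaffected. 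So your argument is more elementary (no integration by parts, no Laplacian comparison lemma), while the paper's would be the one to use if a better rate than $O(k^{-n-1})$ were ever needed. One small bookkeeping point: $\tr(BM_k\bar\mu_k)$ and the Hermitian pairings $(\xi_B,\xi_{M_k})_{\mathrm{FS}}$, $\langle P_k(B),P_k(M_k)\rangle$ need not be real, so you should take real parts of your integrated identity (as in the paper's equation (\ref{hessian-on-evec})); this is harmless since $|\re\tr(BM_k\bar\mu_k)|\leq|\tr(BM_k\bar\mu_k)|$ and it is precisely the real part of the normal term that equals $\tr(B(P_k^*P_kM_k))=0$.
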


\begin{proof}
Since $\tr(BM_k) =0$ and $M_{k}$ is an eigenvector of $P^*_kP_k$ we have, 
\[
\tr\Big(B (P^*_kP_k M_k) \Big) = 0.
\]
Hence it follows from Proposition \ref{hessian-balancing-identity} that
\begin{equation}
\label{hessian-on-evec}
4\pi \langle H(B), H(M_k) \rangle_{L^2(\omega_k)}
+
\langle \diff H(B), \diff H(M_k) \rangle_{L^2(\omega_k)}
=
\re \left(\tr(M_kB \bar \mu_k)\right)
\end{equation}
To control the right-hand-side we apply Lemma \ref{bound-trABmu}, together with $\tr(M_kB) =0$ and $\tr(M^2_k) = 16\pi^2k^{n} + O(k^{n-1/2})$ to deduce that
\[
\left| \tr(M_kB\bar\mu_k) \right|^2
\leq 
C k^{n-2} \tr(B^2)
\]
for some $C$ independent of $B$. To ease the notation, we write 
\[
\Theta_k
=
H(M_k) + \frac{1}{4\pi} \Delta_k H(M_k)
\]
where $\Delta_k$ is the $\omega_k$-Laplacian. It follows  from (\ref{hessian-on-evec}) and the bound on $\tr(M_kB \bar\mu_k)$ that
\[
\left| \left\langle H(B), \Theta_k \right\rangle_{L^2(\omega_k)}\right|^2
\leq
Ck^{n-2} \tr(B^2).
\]
We now pass from the $L^2(\omega_k)$-norm to the $L^2(\omega)$-norm via Lemma \ref{L2k-L2-equivalence}. This gives a constant $C$ such that for all $B$ with $\tr(M_kB) = 0$,
\begin{equation}
\label{intermediate-ineq}
\left| \langle H(B), \Theta_k \rangle_{L^2(\omega)} \right|^2
\leq
Ck^{-n-2} \tr(B^2)
+
Ck^{-4} \| H(B)\|^2_{L^2(\omega)} \| \Theta_k \|^2_{L^2(\omega)}.
\end{equation}

The next step is to show that $\Theta_k = \psi+ O(k^{-1/2})$ in $L^2(\omega)$. Since $H(M_k) = \psi + O(k^{-1/2})$ in $L^2_2(\omega)$, it suffices to prove $\Delta_k H(M_k) = O(k^{-1})$ in $L^2(\omega)$. By Lemma~\ref{Lapk-Lap-control}, there is a constant $C$ such that
\begin{eqnarray*}
k \| \Delta_k H(M_k) \|_{L^2(\omega)}
&\leq&
\| \Delta H(M_k) \|_{L^2(\omega)} 
+ 
Ck^{-2}\|H(M_k)\|_{L^2_2(\omega)}\\
&\leq&
C \| H(M_k) \|_{L^2_2(\omega)}
\end{eqnarray*}
Since $H(M_k) = O(1)$ in $L^2_2(\omega)$ it follows that $\Delta_k H(M_k) = O(k^{-1})$ in $L^2(\omega)$ and hence that $\Theta_k = \psi + O(k^{-1/2})$ in $L^2(\omega)$.

We can now complete the proof. From (\ref{intermediate-ineq}) we deduce that
\[
\left|\langle H(B), \psi  \rangle_{L^2(\omega)}\right|^2
\leq
Ck^{-n-2} \tr(B^2)
+
Ck^{-1} \| H(B) \|^2_{L^2(\omega)}.
\]
Lemma \ref{tr-controls-L2H} gives a constant $C$ such that $\| H(B) \|^2_{L^2(\omega)} \leq Ck^{-n} \tr(B^2)$. This finishes the proof.
\end{proof}

With this result in hand, we now finish the proof of Proposition \ref{lower-bound-result}: assuming $\Aut(X,L)/\C^*$ is discrete, that $\lambda_{r+1}>\lambda_r$ together with the $r^\text{th}$ inductive hypotheses, we will prove the lower bound
\[
\nu_{r+1,k} \geq \frac{\lambda_{r+1}}{64\pi^3 k^2} + O(k^{-3}).
\]

\begin{proof}[Proof of Proposition \ref{lower-bound-result}]
By Lemma \ref{variation_nu_r+1}, we know that 
\[
\nu_{r+1,k} 
\geq
\min_{B\in W_k^\perp} \frac{\| P_kB\|^2}{\tr(B^2)}.
\]
We will use this to prove the result by showing that there is a $C$ such that for all $B \in W_{k}^\perp$, 
\begin{equation}
\label{r-eval-bound}
\| P_k (B) \|^2 
\geq 
\left( \frac{\lambda_{r+1}}{64\pi^3 k^2} + Ck^{-3}\right)
\tr(B^2).
\end{equation}
From Proposition \ref{hessian-balancing-identity}, we know that for any $B \in i\u(n_k+1)$,
\[
4\pi \| H(B) \|^2_{L^2(\omega_k)}
+
\| \xi_B^\top \|^2_{L^2(\omega_k)}
+
\|P_k(B)\|^2
=
\tr(B^2 \bar\mu_k).
\]
From Lemma \ref{bound-trABmu}, we deduce
\begin{equation}
\label{three-term-bound}
4\pi \| H(B) \|^2_{L^2(\omega_k)}
+
\| \xi_B^\top \|^2_{L^2(\omega_k)}
+
\| P_k(B)\|^2
\geq
\left(\frac{1}{4\pi} - Ck^{-1} \right) \tr(B^2)
\end{equation}
for some constant independent of $B$. We will prove the result by showing that when $B \in W_{k}^\perp$ the left-hand-side of (\ref{three-term-bound}) is bounded above by
\[
\left( \frac{16 \pi^2 k^2}{\lambda_{r+1}} + Ck \right)
\| P_k(B)\|^2
+
Ck^{-1} \tr(B^2)
\]
From here (\ref{r-eval-bound}) will follow.

To control the term $\| \xi_B^\top\|^2_{L^2(\omega_k)}$ we use a Lemma due to Phong and Sturm:

\begin{lemma}[Phong--Sturm \cite{phong-sturm}]
\label{phong-sturm-lemma}
Assume that $\Aut(X,L)/\C^*$ is discrete. There is a constant $C$ such that when ever $b \in \B_k$ has $R$-bounded geometry, then for all $B \in i\u(n_k+1)$, 
\[
\| \xi_B^\top \|^2_{L^2(\omega_k)}
\leq
C k \| P_k(B)\|^2.
\]
\end{lemma}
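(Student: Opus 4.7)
The plan is to reduce the tangential estimate to a gradient bound on $H(B)$, control $\D H(B)$ by $P_k(B)$ via the two-Hessian identity of Proposition~\ref{two-hessians-equation}, and let the factor of $k$ emerge from the scaling $\omega_k \sim k\omega$. First I would reduce to $\tr B = 0$, since multiples of the identity annihilate both sides. The identity $\xi_B^\top = \nabla_{\omega_k}H(B)$ then gives
\[
\|\xi_B^\top\|^2_{L^2(\omega_k)} = \|\diff H(B)\|^2_{L^2(\omega_k)},
\]
so the task becomes a gradient estimate for $H(B)$.

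By Proposition~\ref{two-hessians-equation}, $\D H(B) = -JF^{*}(P_k(B))$, hence $\|\D H(B)\|^2_{L^2(\omega_k)} = \langle FF^{*}P_k(B),P_k(B)\rangle_{L^2(\omega_k)}$. Applying a version of Lemma~\ref{control-FF*} valid for $R$-bounded geometry (see the obstacle below), $\|FF^{*}\|_{\mathrm{op}} \leq 4\pi + Ck^{-1}$, which gives
\[
\|\D H(B)\|^2_{L^2(\omega_k)} \leq C\,\|P_k(B)\|^2.
\]
Because $\Aut(X,L)/\C^{*}$ is discrete, $\ker \D = \R$; subtracting the mean of $H(B)$ (which affects neither $\diff H(B)$ nor $\D H(B)$), the overdetermined-elliptic estimate for $\D$ combined with the Poincar\'e inequality then yields
\[
\|\diff H(B)\|^2_{L^2(\omega)} \leq C\,\|\D H(B)\|^2_{L^2(\omega)}.
\]

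To put it together, I would convert between $\omega$- and $\omega_k$-norms. Lemma~\ref{L2k-L2-equivalence} gives $\|\alpha\|^2_{L^2(\omega_k)} \asymp k^{n-1}\|\alpha\|^2_{L^2(\omega)}$ for covectors and $\|s\|^2_{L^2(\omega_k)} \asymp k^n\|s\|^2_{L^2(\omega)}$ for sections of $\Lambda^{0,1}\otimes TX$, while Lemma~\ref{Dk-D-control} yields $\D \approx k\D_k$. Combining these one obtains $\|\D H(B)\|^2_{L^2(\omega)} \asymp k^{2-n}\|\D H(B)\|^2_{L^2(\omega_k)}$ (up to lower order), and chaining the inequalities gives
\[
\|\xi_B^\top\|^2_{L^2(\omega_k)} \leq Ck^{n-1}\|\diff H(B)\|^2_{L^2(\omega)} \leq Ck^{n-1}\|\D H(B)\|^2_{L^2(\omega)} \leq Ck\,\|\D H(B)\|^2_{L^2(\omega_k)} \leq Ck\,\|P_k(B)\|^2,
\]
which is the desired bound.

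The main obstacle is uniformity over the $R$-bounded family, since Lemma~\ref{control-FF*} and the equivalences in Section~\ref{estimates} were established for the specific sequence $\Hilb_k(h)$, for which $\omega_k = k\omega + O(k^{-1})$. For an arbitrary $b \in \B_k$ of $R$-bounded geometry one only knows that $k^{-1}\omega_b$ lies in a $C^s$-compact family uniformly equivalent to $\omega$, not an asymptotic expansion. To handle this I would rescale: setting $\tilde\omega = k^{-1}\omega_b$, the Chern curvature of $(TX,\omega_b)$ equals that of $(TX,\tilde\omega)$ and so has components that are uniformly bounded; raising indices with $\omega_b^{-1} = k^{-1}\tilde\omega^{-1}$ produces the required factor $k^{-1}$ in operator norm. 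The same rescaling handles $R(N,\omega_b)$ and delivers the uniform version of Lemma~\ref{control-FF*}; the elliptic constant for $\D$ depends only on the fixed reference metric $\omega$ and so causes no difficulty.
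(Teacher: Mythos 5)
First, note that the paper does not prove this lemma at all: it is quoted directly from Phong--Sturm (inequality (5.9) there), so there is no internal proof to compare with. Your mechanism --- $\xi_B^\top = \nabla H(B)$, the identity $\D H(B) = -JF^*(P_k(B))$ of Proposition~\ref{two-hessians-equation}, an operator bound on $F F^*$, a spectral-gap estimate for $\D$ coming from discreteness of $\Aut(X,L)/\C^*$, and the factor $k$ from the scaling $\omega_k\sim k\omega$ --- is indeed the right geometric skeleton, and your power counting $k^{n-1}\cdot k^{2-n}=k$ is correct. For the particular sequence $\Hilb_k(h)$, where $\omega_k=k\omega+O(k^{-1})$, your chain does work, modulo spelling out that the additive error $Ck^{-2}\|H(B)\|^2_{L^2_2}$ from Lemma~\ref{Dk-D-control} must be absorbed using the mean-zero reduction and the estimate $\|f\|_{L^2_2}\leq C\|\D f\|_{L^2}$ (the stated lemma has no additive $\tr(B^2)$ term, so ``up to lower order'' is not enough).

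The genuine gap is in the uniformity over the whole $R$-bounded class, which is the actual content of the lemma and the hard part of Phong--Sturm's work. Your claim that $\tilde\omega=k^{-1}\omega_b$ ``lies in a $C^s$-compact family'' is false: $R$-bounded geometry bounds derivatives of $\omega_b-k\omega$ in the \emph{rescaled} metric $k\omega$, so in the fixed $\omega$-scale the $j$-th derivatives of $\tilde\omega$ may grow like $k^{j/2}$; there is uniform $C^0$-equivalence but no $C^s(\omega)$-precompactness, and the curvature of $\tilde\omega$ is not uniformly bounded in the fixed scale (it can be of size $k$). What survives is that the curvature measured in the $\omega_b$-scale is bounded, so one still gets $\|F_bF_b^*\|_{\mathrm{op}}\leq C(R)$ --- but not the $4\pi+O(k^{-1})$ you assert, which is harmless here since only a constant is needed. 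Much more seriously, your passage from $\D_b$ (the operator of Proposition~\ref{two-hessians-equation}, defined by $\omega_b$) to $\D_\omega$ (where you have the spectral gap) uses Lemma~\ref{Dk-D-control}, i.e.\ $k\D_k=\D_\omega+O(k^{-2})$; for a general $R$-bounded $b$ one only knows $\tilde\omega$ is uniformly equivalent to $\omega$, not close to it, so $k\D_b-\D_\omega$ is in no sense small and ``the elliptic constant for $\D$ depends only on $\omega$'' does not address this. What is actually required is a Poincar\'e/spectral-gap inequality of the form $\|\diff f\|^2_{L^2(\tilde\omega)}\leq C\|\D_{\tilde\omega}f\|^2_{L^2(\tilde\omega)}$ uniformly over the (non-compact) $R$-bounded family; this is essentially the heart of Phong--Sturm's Theorem~2 and is not supplied by your argument. (A possible repair exploits that $\D_{\tilde\omega}f=\delb(\tilde\omega^{-1}\diff f)$ with $\delb$ on vector fields metric-independent and all remaining norms zeroth-order, hence uniformly comparable; but one must then control the component of $\tilde\omega^{-1}\diff f$ along holomorphic vector fields without holomorphy potential, which discreteness of $\Aut(X,L)/\C^*$ does not exclude, and none of this is in your proposal.)
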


This is inequality (5.9) in \cite{phong-sturm}  and is proved in the course of the proof of Theorem 2 there. The sequence $\Hilb_k(h) \in \B_k$ that we are considering certainly has $R$-bounded geometry; thanks to  this result our work is reduced to showing that
\begin{equation}\label{hessian-and-tr-controls-HB}
\| H(B) \|^2_{L^2(\omega_k)}
\leq
\left( \frac{4 \pi k^2}{\lambda_{r+1}} + Ck \right)
\| P_k(B)\|^2
+
Ck^{-1} \tr(B^2).
\end{equation}

To do this, we work initially with the $L^2(\omega)$-norm. We partially decompose $H(B)$ in terms of the basis  $\phi_0, \phi_1, \ldots , \phi_{r}$,  introduced in \S\ref{lower-bound-proof-section}, for the span $E_r$ of the first $r+1$ eigen\-spaces of $\D^*\D$: 
\[
H(B)
=
\sum_{j=0}^{r}
\langle H(B) , \phi_j \rangle_{L^2(\omega)}\,\phi_j
+
\hat H
\]
where $\hat{H}$ is $L^2$-orthogonal to $E_r$. Now for each $j = 0, \ldots , r$, we apply Proposition~\ref{nu-espaces-converging} to the sequence $M_k = A_{j,k}$. (Recall that $A_{j,k}$ are the eigenvectors provided by the inductive hypotheses for which $H(A_{j,k})$ approaches $\phi_j$ and we write $W_k$ for their span; again, see \S\ref{lower-bound-proof-section}). The hypotheses of Proposition \ref{nu-espaces-converging} are satisfied since I3 gives that  
\[
\left\| H(A_{j,k}) - \phi_j\right \|^2_{L^2_2} = O(k^{-1})
\]
and I2 gives that $\tr(A_{j,k}^2) = 16\pi^2k^n + O(k^{n-1/2})$ (since the $\phi_j$ are of unit length in $L^2$). It follows that there is a constant $C$ such that if $B \in W_k^\perp$ then 
\[
\left| \langle H(B), \phi_j \rangle\right|^2
\leq 
Ck^{-n-1} \tr(B^2)
\]
for $j = 0, \ldots , r$. Hence,
\begin{equation}
\label{H-norm-in-decomp}
\| H(B) \|^2_{L^2(\omega)}
\leq
\| \hat H \|^2_{L^2(\omega)}
+ 
C k^{-n-1} \tr(B^2).
\end{equation}
Meanwhile, 
\begin{equation}
\label{DH-controls-hatH}
\| \hat H \|^2_{L^2(\omega)} \leq\frac{1}{\lambda_{r+1}} \| \D H(B) \|^2_{L^2(\omega)},
\end{equation}
since $\hat{H}$ is orthogonal to $E_{r}$ and so lies in the span of the eigenspaces of eigenvalue $\lambda \geq \lambda_{r+1}$. Applying Proposition \ref{P*P-controls-D}---which controls $\D H(B)$ in terms of the Hessian of balancing energy and the $L^2(\omega)$-norm of $H(B)$---we see that
\[
\| \hat H\|^2_{L^2(\omega)}
\leq
k^{2-n}\left(\frac{4\pi}{\lambda_{r+1}} + Ck^{-1}\right) 
\| P_k(B)\|^2
+
Ck^{-2} \| H(B) \|^2_{L^2(\omega)}
\]
Lemma \ref{tr-controls-L2H} gives that $\| H(B)\|^2_{L^2(\omega)} \leq C k^{-n} \tr(B^2)$, so that
\[
\| \hat H\|^2_{L^2(\omega)}
\leq
k^{2-n}\left(\frac{4\pi}{\lambda_{r+1}} + Ck^{-1}\right) 
\|P_k(B)\|^2
+
Ck^{-n-2} \tr(B^2).
\]
Combining this with (\ref{H-norm-in-decomp}) we see that
\[
\| H(B)\|^2_{L^2(\omega)}
	\leq
k^{2-n}\left(\frac{4\pi}{\lambda_{r+1}} + Ck^{-1}\right) 
\| P_k(B)\|^2
+
Ck^{-n-1} \tr(B^2)
\]

We now convert this bound to the $L^2(\omega_k)$-norm via Lemma \ref{L2k-L2-equivalence}, which gives a constant $C$ such that for any $f$, 
\[
\| f \|^2_{L^2(\omega_k)}
\leq
(k^n + Ck^{n-2}) \| f \|^2_{L^2(\omega)}.
\]
Applying this to $H(B)$ gives (\ref{hessian-and-tr-controls-HB}) which completes the proof.
\end{proof}

\section{Convergence of the eigenspaces}
\label{convergence-section}

By this stage, we have established I1 of \S\ref{strategy}; namely, if $\Aut(X,L)/\C^*$ is discrete, if $r^\text{th}$ inductive hypotheses hold, and if
\[
\lambda_r < \lambda_{r+1} = \cdots  = \lambda_s < \lambda_{s+1}
\]
then
\[
\nu_{j,k}
=
\frac{\lambda_r}{64\pi^3k^2} + O(k^{-3}).
\]
for $j = r+1, \ldots s$. (Proposition \ref{upper-bound-result} establishes the upper bound and Proposition~\ref{lower-bound-result} the lower bound in view of the trivial inequality $\nu_{j,k}\geq \nu_{r+1,k}$ for $j > r+1$.) To complete the induction, we must show that I2 and I3 hold.

\subsection{The proof of I2}

\begin{proposition}
\label{I2_holds}
Assume that $\Aut(X,L)/\C^*$ is discrete and that $\lambda_r < \lambda_{r+1} = \cdots  = \lambda_s < \lambda_{s+1}$. If the $r^{\text{th}}$-inductive hypotheses hold then there is a constant $C$ such that for all $A, B \in F_{s,k}$, 
\[
\left| \tr(AB) - 16\pi^2k^n\langle H(A), H(B) \rangle_{L^2(\omega)} \right|
\leq 
Ck^{-1}\tr(A^2)^{1/2}\tr(B^2)^{1/2}.
\]
\end{proposition}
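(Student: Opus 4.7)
The plan is to establish the estimate in the diagonal case $A=B$ and recover the general bilinear statement by polarisation. Indeed,
\[
\beta(A,B) := \tr(AB) - 16\pi^2 k^n \langle H(A), H(B) \rangle_{L^2(\omega)}
\]
is a symmetric real bilinear form on the Hermitian matrices, so the identity $2\beta(A,B) = \tfrac{1}{2}\big(\beta(A+\lambda B,A+\lambda B) - \beta(A-\lambda B,A-\lambda B)\big)/\lambda$ applied to $\lambda = \tr(A^2)^{1/2}/\tr(B^2)^{1/2}$ turns any diagonal bound $|\beta(A,A)| \leq C k^{-1}\tr(A^2)$ on $F_{s,k}$ into the bilinear bound $|\beta(A,B)| \leq C'k^{-1}\tr(A^2)^{1/2}\tr(B^2)^{1/2}$.

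For the diagonal estimate, specialise Proposition \ref{hessian-balancing-identity} to $A=B$:
\[
\tr(A^2 \bar\mu_k)
=
4\pi\|H(A)\|^2_{L^2(\omega_k)} + \|\diff H(A)\|^2_{L^2(\omega_k)} + \|P_k(A)\|^2.
\]
Lemma \ref{bound-trABmu} replaces the left-hand side by $\tfrac{1}{4\pi}\tr(A^2)$ modulo $O(k^{-1})\tr(A^2)$. What remains is to show that, for $A \in F_{s,k}$, the last two terms on the right contribute only at order $k^{-1}\tr(A^2)$, and then to pass from $\|H(A)\|^2_{L^2(\omega_k)}$ to $k^n\|H(A)\|^2_{L^2(\omega)}$ using Lemmas \ref{L2k-L2-equivalence} and \ref{tr-controls-L2H}, which costs only a further $O(k^{-2})\tr(A^2)$.

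The crux of the argument is where the newly-proved statement I1 enters. Propositions \ref{upper-bound-result} and \ref{lower-bound-result} together give $\nu_{s,k} = O(k^{-2})$, and so for $A \in F_{s,k}$ the variational characterisation of eigenvalues yields
\[
\|P_k(A)\|^2 \leq \nu_{s,k}\tr(A^2) = O(k^{-2})\tr(A^2).
\]
The tangential term is then handled by the Phong--Sturm Lemma \ref{phong-sturm-lemma}, which applies because $\Hilb_k(h)$ has $R$-bounded geometry thanks to Theorem \ref{asymptotics-Bergman}:
\[
\|\diff H(A)\|^2_{L^2(\omega_k)}
=
\|\xi_A^\top\|^2_{L^2(\omega_k)}
\leq
Ck\|P_k(A)\|^2
=
O(k^{-1})\tr(A^2).
\]
Combining everything, the main identity collapses to $\tr(A^2) = 16\pi^2 k^n \|H(A)\|^2_{L^2(\omega)} + O(k^{-1})\tr(A^2)$, which is the diagonal estimate; polarisation finishes the proof.

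There is no real analytic obstacle: the argument is essentially linear algebra on top of the already-established technical lemmas. The only subtle point is ordering---one must verify that the Phong--Sturm bound and the $O(k^{-2})$ bound on $\nu_{s,k}$ (which is I1 for the current step) are both available before attacking I2. Both are, so the induction closes cleanly.
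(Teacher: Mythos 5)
Your proposal is correct and follows essentially the same route as the paper's proof: the identity of Proposition \ref{hessian-balancing-identity} combined with Lemma \ref{bound-trABmu}, the bound $\|P_k(A)\|^2 \leq \nu_{s,k}\tr(A^2) = O(k^{-2})\tr(A^2)$ coming from the just-established eigenvalue asymptotics (only the upper bound of Proposition \ref{upper-bound-result} is actually needed), the Phong--Sturm Lemma \ref{phong-sturm-lemma} for the tangential term, and Lemmas \ref{L2k-L2-equivalence} and \ref{tr-controls-L2H} to pass from $L^2(\omega_k)$ to $k^n L^2(\omega)$. The only difference is cosmetic: you prove the diagonal case and polarise within the subspace $F_{s,k}$, whereas the paper estimates the bilinear expression directly via Cauchy--Schwarz; both are valid and of equal difficulty.
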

\begin{proof}
Recall Proposition \ref{hessian-balancing-identity}, which says that
\[
\re(\tr(AB\bar\mu))
-
4\pi \langle H(A), H(B) \rangle_{L^2(\omega_k)}
=
\tr (A(P^*_kP_kB))
+
\re \langle \xi_A^\top, \xi_B^\top \rangle_{L^2(\omega_k)}.
\]
We consider each term in this equality separately. 

Since $A$ and $B$ lie in the span of eigenvectors with eigenvalue at most $\nu_{s,k}$ and since, by Proposition \ref{upper-bound-result}, $\nu_{s,k} = O(k^{-2})$, we have
\begin{equation}
\label{hessian_term}
\left|\tr (A(P^*_kP_kB))\right|
\leq 
Ck^{-2}\tr(A^2)^{1/2}\tr(B^2)^{1/2} 
\end{equation}
for some uniform constant $C$.

Next we apply Cauchy--Schwarz and Lemma \ref{phong-sturm-lemma} to deduce that 
\[
\left|\langle \xi_A^\top, \xi_B^\top \rangle_{L^2(\omega_k)}\right|
\ \leq\ 
\|\xi_A^\top\|_{L^2(\omega_k)}\|\xi_B^\top\|_{L^2(\omega_k)}
\ \leq\
Ck\|P_k(A)\|\,\|P_k(B)\|
\]
Again, as $A$ is an eigenvector of $P^*_kP_k$ with eigenvalue $O(k^{-2})$ we have $\|P_kA\|^2 \leq Ck^{-2}\tr(A^2)$ and similarly for $B$; hence 
\begin{equation}
\label{vector_term}
\left|\langle \xi_A^\top, \xi_B^\top \rangle_{L^2(\omega_k)}\right|
\leq
Ck^{-1} \tr(A^2)^{1/2}\tr(B^2)^{1/2}
\end{equation}
for some uniform $C$.

Now we use Lemma \ref{bound-trABmu}, which gives a uniform $C$ such that
\begin{multline}
\label{ABmubar_term}
\left|
\re\tr\left(AB\bar\mu_k\right)
-
\frac{1}{4\pi} \tr(AB)
\right|
\ \leq\
\left|
\tr\left(AB\bar\mu_k\right)
-
\frac{1}{4\pi} \tr(AB)
\right|\\
\ \leq\
Ck^{-1}\tr(A^2)^{1/2}\tr(B^2)^{1/2}
\end{multline}
(where the first inequality holds because $\tr(AB)$ is real).

Next, we use Lemma \ref{L2k-L2-equivalence}, which gives a uniform $C$ such that
\[
\left|
\langle H(A), H(B) \rangle_{L^2(\omega_k)}
-
k^n\langle H(A), H(B) \rangle_{L^2(\omega)}
\right|
\leq
Ck^{n-2} \| H(A)\|_{L^2(\omega)}\|H(B)\|_{L^2(\omega)}.
\]
Finally, Lemma \ref{tr-controls-L2H} means we can replace the norms of $H(A)$ and  $H(B)$ by those of $A$ and $B$ to get
\begin{equation}
\label{L2_term}
\left|
\langle H(A), H(B) \rangle_{L^2(\omega_k)}
-
k^n\langle H(A), H(B) \rangle_{L^2(\omega)}
\right|
\leq
Ck^{-2}\tr(A^2)^{1/2}\tr(B^2)^{1/2}
\end{equation}
Combining the inequalities (\ref{hessian_term}, \ref{vector_term}, \ref{ABmubar_term}, \ref{L2_term}) with the equality from Proposition \ref{hessian-balancing-identity} gives the estimate in the statement of the proposition.
\end{proof}

\subsection{The proof of I3}

To complete the proof of Theorems \ref{eigenvalue-theorem} and \ref{eigenvector-theorem} it remains to establish the last of the $s^\text{th}$-inductive hypotheses.

\begin{proposition}
\label{I3_holds}
Assume that $\Aut(X,L)/\C^*$ is discrete, that the $r^\text{th}$-inductive hypotheses hold and that $\lambda_r < \lambda_{r+1} = \cdots = \lambda_s < \lambda_{s+1}$. Given $\phi \in V_{r+1}$ a $\lambda_{r+1}$-eigenvector of $\D^*\D$, let $A_{\phi,k}$ be the point of $F_{r+1,s,k}$ for which $H(A_{\phi,k})$ is nearest to $\phi$ in $L^2$. Then
\[
\| H(A_{\phi,k}) - \phi \|^2_{L^2_2(\omega)} = O(k^{-1}).
\]
and this estimate is uniform in $\phi$ if in addition we require $\|\phi\|_{L^2} =1$.
\end{proposition}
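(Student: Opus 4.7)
The plan is to exhibit a near-preimage $\widetilde{A}_{\phi,k}\in F_{r+1,s,k}$ with $\|H(\widetilde{A}_{\phi,k})-\phi\|_{L^2}^2=O(k^{-1})$, then use $L^2$-minimality to transfer this bound to $A_{\phi,k}$, and finally upgrade to $L^2_2$ via a variational identity together with overdetermined ellipticity of $\D$. The key lemma is a Poincar\'e-type inequality on $F_{r+1,s,k}$: every eigenvalue of $P_k^*P_k$ on $F_{r+1,s,k}$ is at most $\nu_{s,k}=\lambda_{r+1}/(64\pi^3k^2)+O(k^{-3})$ by I1 at level $s$ (just established in Propositions~\ref{upper-bound-result} and \ref{lower-bound-result}), so $\|P_kA\|^2\le(\lambda_{r+1}/(64\pi^3k^2)+O(k^{-3}))\tr(A^2)$ for all $A\in F_{r+1,s,k}$. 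Combining this with Proposition~\ref{P*P-controls-D} and with I2 at level $s$ (Proposition~\ref{I2_holds}), which gives $\tr(A^2)=16\pi^2k^n\|H(A)\|_{L^2}^2(1+O(k^{-1}))$, yields
\[
\|\D H(A)\|_{L^2(\omega)}^2\le \lambda_{r+1}\|H(A)\|_{L^2(\omega)}^2\bigl(1+O(k^{-1})\bigr).
\]

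I next show that $H(A)$ lies within $O(k^{-1/2})\|H(A)\|_{L^2}$ of $V_{r+1}$ for every $A\in F_{r+1,s,k}$. For each of the finitely many $\D^*\D$-eigenfunctions $\psi_j$ with $\lambda_j<\lambda_{r+1}$, the near-eigenvector $M_{j,k}\in F_{r,k}$ supplied by I3 at level $r$ satisfies (a mild extension of) the hypotheses of Proposition~\ref{nu-espaces-converging}: although $M_{j,k}$ is not itself an eigenvector, the proof of that proposition requires only $\tr(BM_k)=0$ and $\tr(B\,P_k^*P_kM_k)=0$, both of which hold automatically for $B=A\in F_{r+1,s,k}$ by orthogonality of eigenspaces (since $P_k^*P_k$ preserves $F_{r,k}$). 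This yields $|\langle H(A),\psi_j\rangle_{L^2}|^2=O(k^{-1})\|H(A)\|_{L^2}^2$. Writing $H(A)=\sum c_j\psi_j$ in a $\D^*\D$-eigenbasis, the Poincar\'e inequality above reads $\sum(\lambda_j-\lambda_{r+1})c_j^2\le O(k^{-1})\|H(A)\|_{L^2}^2$; inserting the low-frequency bounds and exploiting the spectral gap $\lambda_{s+1}-\lambda_{r+1}>0$ to control the high-frequency piece gives $\|H(A)-P_{V_{r+1}}H(A)\|_{L^2}^2=O(k^{-1})\|H(A)\|_{L^2}^2$.

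Since $\dim F_{r+1,s,k}=s-r=\dim V_{r+1}$ by I1 at level $s$, the map $P_{V_{r+1}}\circ H\colon F_{r+1,s,k}\to V_{r+1}$ is (after the natural rescaling) a small perturbation of an isometry, hence an isomorphism for all large $k$. Given $\phi\in V_{r+1}$ with $\|\phi\|_{L^2}=1$, let $\widetilde A_{\phi,k}\in F_{r+1,s,k}$ be its preimage; writing $H(\widetilde A_{\phi,k})=\phi+w$ with $w\perp V_{r+1}$, the previous step gives $\|w\|_{L^2}^2\le O(k^{-1})(1+\|w\|_{L^2}^2)$, so $\|H(\widetilde A_{\phi,k})-\phi\|_{L^2}^2=O(k^{-1})$, and by minimality the same bound applies to $\|H(A_{\phi,k})-\phi\|_{L^2}^2$.

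For the $L^2_2$ upgrade, set $\epsilon=H(A_{\phi,k})-\phi$. The $L^2$-orthogonality $\langle H(A_{\phi,k}),\epsilon\rangle_{L^2}=0$ characterizing the minimizer yields $\|H(A_{\phi,k})\|_{L^2}^2=1-\|\epsilon\|_{L^2}^2$; combined with $\D^*\D\phi=\lambda_{r+1}\phi$ and the Poincar\'e inequality,
\[
\|\D\epsilon\|_{L^2}^2=\|\D H(A_{\phi,k})\|_{L^2}^2-2\lambda_{r+1}\|H(A_{\phi,k})\|_{L^2}^2+\lambda_{r+1}\le\lambda_{r+1}\|\epsilon\|_{L^2}^2+O(k^{-1})=O(k^{-1}).
\]
The constant component of $\epsilon$ is itself $O(k^{-1})$ in $L^2$ via I2 applied to the pair $(A_{\phi,k},\mathrm{id}_k)$ (using $\tr(A_{\phi,k})=0$), and overdetermined ellipticity of $\D$ modulo constants then delivers $\|\epsilon\|_{L^2_2(\omega)}^2=O(k^{-1})$. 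Uniformity in $\phi$ on the unit sphere of $V_{r+1}$ is immediate from the uniformity of all the ingredients. The main subtlety is the step-two use of the spectral gap $\lambda_{s+1}>\lambda_{r+1}$ to separate $V_{r+1}$ from higher-frequency eigenspaces of $\D^*\D$; without it, the Poincar\'e inequality alone could not prevent $H(A)$ from spreading into the region $\lambda>\lambda_{r+1}$.
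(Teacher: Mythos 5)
Your argument is correct and shares its skeleton with the paper's proof: both rest on (i) an upper bound $\|\D H(A)\|_{L^2}^2\leq\lambda_{r+1}\|H(A)\|_{L^2}^2(1+O(k^{-1}))$ on $F_{r+1,s,k}$ obtained from Proposition \ref{P*P-controls-D}, the eigenvalue bound $\nu_{s,k}$ and I2 at level $s$; (ii) control of the components of $H(A)$ along $E_r$ via Proposition \ref{nu-espaces-converging} applied with the inductively supplied $A_{j,k}$ (your remark that these need not be genuine eigenvectors, and that only $\tr(BM_k)=0$ and $\tr(B\,P_k^*P_kM_k)=0$ are used, is accurate and is in fact glossed in the paper); (iii) the spectral gap $\lambda_{s+1}>\lambda_{r+1}$ to kill the high-frequency part; and (iv) the overdetermined-elliptic estimate for $\D$ to pass from $L^2$ and $\|\D\epsilon\|_{L^2}$ to $L^2_2$. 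Where you genuinely diverge is the endgame. The paper proves a \emph{two-sided} asymptotic $16\pi^2k^n\|\D H(A)\|^2_{L^2}=\lambda_{r+1}\tr(A^2)+O(k^{-1})\tr(A^2)$ (Lemma \ref{L2-norm_DH}, whose lower half recycles inequalities (\ref{H-norm-in-decomp}) and (\ref{DH-controls-hatH}) from the proof of Proposition \ref{lower-bound-result}), identifies $A_{\phi,k}$ with the unique element satisfying $H(A)_{r+1}=\phi$, and then reads off $\|\D(H(A_{\phi,k})-\phi)\|^2_{L^2}=O(k^{-1})$ by subtracting spectral pieces. You instead need only the upper (Poincar\'e) bound: you produce a near-preimage $\widetilde A_{\phi,k}$, transfer the $L^2$ bound to the true minimiser by minimality, and then obtain $\|\D\epsilon\|^2_{L^2}\leq\lambda_{r+1}\|\epsilon\|^2_{L^2}+O(k^{-1})$ from the exact variational identity $\|\D\epsilon\|^2=\|\D H(A_{\phi,k})\|^2-2\lambda_{r+1}\|H(A_{\phi,k})\|^2+\lambda_{r+1}$, which uses the minimiser's orthogonality $\langle H(A_{\phi,k}),\epsilon\rangle_{L^2}=0$. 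This is slightly leaner (no lower Dirichlet-energy bound needed) and also more careful about what the $L^2$-nearest point actually is; the extra step isolating the constant component of $\epsilon$ is harmless but unnecessary, since the elliptic estimate $\|f\|_{L^2_2}\leq C(\|f\|_{L^2}+\|\D f\|_{L^2})$ already absorbs constants.

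One small slip: you attribute $\dim F_{r+1,s,k}=s-r$ to I1 at level $s$, but I1 only covers $\nu_{j,k}$ for $j\leq s$ and so does not rule out $\nu_{s+1,k}=\nu_{s,k}$, which would make $\dim F_{r+1,s,k}>s-r$. The correct route---which your own argument already contains---is that the rescaled map $A\mapsto P_{V_{r+1}}H(A)$ is a near-isometry by your step two together with I2, hence injective for large $k$, forcing $\dim F_{r+1,s,k}\leq\dim V_{r+1}=s-r$; combined with the trivial inequality $\dim F_{r+1,s,k}\geq s-r$ this gives equality and the isomorphism (this is exactly how the paper argues, via injectivity of $A\mapsto H(A)_{r+1}$ in Lemma \ref{HA-r+1_bound}). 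With that one-line repair your proof is complete.
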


To save repeatedly restating them, the hypotheses of Proposition \ref{I3_holds} are considered to hold throughout this section. We begin with a Lemma.

\begin{lemma}\label{L2-norm_DH}
Let $A$ be a $\nu_{j,k}$-eigenvector for $P^*_kP_k$ with $r+1 \leq j \leq s$. There is a uniform constant $C$ such that
\[
\left|
16\pi^2 k^n \left\| \D H(A)\right\|^2_{L^2(\omega)}
-
\lambda_{r+1} \tr(A^2) 
\right|
\leq C k^{-1} \tr(A^2).
\]
\end{lemma}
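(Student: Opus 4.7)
The plan is to prove the two one-sided inequalities separately. The upper bound
\[
16\pi^2 k^n \|\D H(A)\|^2_{L^2(\omega)} \leq \lambda_{r+1}\tr(A^2) + Ck^{-1}\tr(A^2)
\]
follows essentially from material already in place: apply Proposition~\ref{P*P-controls-D} to $A$, substitute $\|P_k A\|^2 = \nu_{j,k}\tr(A^2)$ (which holds because $A$ is a $\nu_{j,k}$-eigenvector), use Lemma~\ref{tr-controls-L2H} to absorb the $\|H(A)\|^2_{L^2}$ term, and insert $\nu_{j,k} = \lambda_{r+1}/(64\pi^3 k^2) + O(k^{-3})$ from part I1 of the current inductive step (Propositions~\ref{upper-bound-result} and~\ref{lower-bound-result}); multiplying through by $16\pi^2 k^n$ gives the stated bound.

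The matching lower bound is the core of the lemma, and I plan to derive it from the variational inequality $\|\D f\|^2_{L^2(\omega)} \geq \lambda_{r+1}\|f\|^2_{L^2(\omega)}$ valid for $f \in E_r^\perp$, where $E_r$ denotes the span of the first $r+1$ eigenspaces of $\D^*\D$. The key intermediate claim is that $H(A)$ is asymptotically orthogonal to $E_r$, namely
\[
\|P_{E_r}H(A)\|^2_{L^2(\omega)} \leq Ck^{-n-1}\tr(A^2).
\]
To establish this, I choose an $L^2$-orthonormal basis $\phi_0,\ldots,\phi_r$ of $E_r$ consisting of $\D^*\D$-eigenvectors. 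Applying the inductive hypothesis I3 at level $r$ (and dealing with the constant $\phi_0$ directly via $H(\mathrm{id}_k)=1/(4\pi)$) produces matrices $B_{j,k} \in F_{r,k}$ satisfying $\|H(B_{j,k}) - \phi_j\|_{L^2_2} = O(k^{-1/2})$. Part I1 at the current step gives $\nu_{r,k}<\nu_{r+1,k}$ for large $k$, hence $\dim F_{r,k}=r+1$, while the already-proved Proposition~\ref{I2_holds} forces the $B_{j,k}$ to be linearly independent exactly as in Lemma~\ref{variation_nu_r+1}, so $\{B_{j,k}\}$ spans $F_{r,k}$.

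Now $\tr(AB_{j,k})=0$ because $A \in F_{r+1,s,k}$. Combined with Proposition~\ref{I2_holds} applied to the pair $(A,B_{j,k})$ and the bound $\tr(B_{j,k}^2)=O(k^n)$ that I2 itself yields, this produces $|\langle H(A), H(B_{j,k})\rangle_{L^2}|=O(k^{-n/2-1}\tr(A^2)^{1/2})$; combining with the $L^2$-convergence $H(B_{j,k})\to\phi_j$ and Lemma~\ref{tr-controls-L2H} for $\|H(A)\|_{L^2}$ then gives the announced $L^2$-bound on $\|P_{E_r}H(A)\|^2$. Writing $\hat H = H(A) - P_{E_r}H(A) \in E_r^\perp$ and using that $\D^*\D$ preserves the decomposition $L^2 = E_r \oplus E_r^\perp$, one obtains
\[
\|\D H(A)\|^2_{L^2(\omega)} \geq \|\D \hat H\|^2_{L^2(\omega)} \geq \lambda_{r+1}\|\hat H\|^2_{L^2(\omega)};
\]
a further application of Proposition~\ref{I2_holds} to control $\|H(A)\|^2_{L^2}$ from below by $\tr(A^2)/(16\pi^2 k^n)(1 - Ck^{-1})$, and hence $\|\hat H\|^2_{L^2}$ by the same quantity up to a correction of order $k^{-n-1}\tr(A^2)$, yields the lower bound after multiplying by $16\pi^2 k^n$.

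The main obstacle is orchestrating the auxiliary basis $\{B_{j,k}\}$: its existence and usefulness depend simultaneously on I3 at the previous level (to place $H(B_{j,k})$ close to $\phi_j$ in $L^2$), I2 at the current level (both to convert trace orthogonality $\tr(AB_{j,k})=0$ into approximate $L^2$-orthogonality of $H(A)$ to each $\phi_j$, and to ensure the $B_{j,k}$ are linearly independent), and I1 at the current level (to ensure $\dim F_{r,k}=r+1$ so that the $B_{j,k}$ actually span $F_{r,k}$). Without this synchronisation, the asymptotic orthogonality $H(A)\perp E_r$ is not available and the variational step collapses.
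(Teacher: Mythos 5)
Your argument is correct, and its overall skeleton matches the paper's: the upper bound is obtained exactly as in the paper from Proposition~\ref{P*P-controls-D}, the eigenvector relation $\|P_kA\|^2=\nu_{j,k}\tr(A^2)$ and the already-established eigenvalue asymptotics (whether one controls the lower-order term $Ck^{-2}\|H(A)\|^2_{L^2}$ by Lemma~\ref{tr-controls-L2H}, as you do, or by Proposition~\ref{I2_holds}, as the paper does, is immaterial); and the lower bound rests, as in the paper, on showing $H(A)$ is $L^2$-almost orthogonal to $E_r$, invoking the variational inequality $\|\D \hat H\|^2\geq\lambda_{r+1}\|\hat H\|^2$, and converting $\|H(A)\|^2_{L^2}$ back to $\tr(A^2)$ via Proposition~\ref{I2_holds}. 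The genuine difference is how the key orthogonality estimate $|\langle H(A),\phi_j\rangle|^2\leq Ck^{-n-1}\tr(A^2)$ is produced. The paper simply quotes inequalities (\ref{H-norm-in-decomp}) and (\ref{DH-controls-hatH}) from the proof of Proposition~\ref{lower-bound-result}, which ultimately rest on Proposition~\ref{nu-espaces-converging}; that proposition exploits the eigenvector equation $\tr(B\,P_k^*P_kM_k)=0$ together with Proposition~\ref{hessian-balancing-identity} and Lemma~\ref{bound-trABmu}, and is deliberately proved with only the level-$r$ hypotheses because it is needed before I2 at level $s$ is available. You instead apply the just-proved Proposition~\ref{I2_holds} to the pair $(A,B_{j,k})\in F_{s,k}\times F_{s,k}$ and use the exact trace-orthogonality $\tr(AB_{j,k})=0$ (valid for large $k$ since $\nu_{r,k}<\nu_{r+1,k}$ by I1), together with $\|H(B_{j,k})-\phi_j\|_{L^2}=O(k^{-1/2})$ from I3 at level $r$, $\tr(B_{j,k}^2)=O(k^n)$ and Lemma~\ref{tr-controls-L2H}; a short check confirms this gives the same $O(k^{-n-1}\tr(A^2))$ bound. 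This is a legitimate and slightly more economical route at this point of the induction, since it bypasses the $\Theta_k$-analysis of Proposition~\ref{nu-espaces-converging}; it works precisely because Lemma~\ref{L2-norm_DH} sits after \S\ref{convergence-section}.1, so I2 at the current level may be used. One small remark: the linear independence and spanning of $\{B_{j,k}\}$ in $F_{r,k}$, which you present as essential, is not actually needed for your argument --- all that is used is $\tr(AB_{j,k})=0$ (eigenvalue separation) and the proximity of $H(B_{j,k})$ to $\phi_j$.
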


\begin{proof}
Since $A$ is a $\nu_{j,k}$-eigenvector, Proposition \ref{P*P-controls-D} gives
\[
\| \D H(A) \|^2_{L^2(\omega)}
\leq
\left(4\pi k^{2-n} + Ck^{1-n} \right)\nu_{j,k} \tr(A^2) 
+
Ck^{-2}\|H(A)\|^2_{L^2(\omega)}.
\]
Now Proposition \ref{upper-bound-result}  gives $\nu_{j,k} \leq \frac{\lambda_{r+1}}{64\pi^3k^2} + O(k^{-3})$ and Proposition \ref{I2_holds} gives 
\[
\left|16 \pi^2 k^n \| H(A) \|^2_{L^2(\omega)} - \tr(A^2)\right|
\leq Ck^{-1} \tr(A^2).
\]
So 
\[
16\pi^2k^n \| \D H(A) \|^2_{L^2(\omega)}
-
\lambda_{r+1}\tr(A^2)
\leq
Ck^{-1}\tr(A^2).
\]

To prove the lower bound, recall inequalities (\ref{H-norm-in-decomp}) and (\ref{DH-controls-hatH}), proved in the course of Proposition \ref{lower-bound-result}. These inequalities apply to any $B$ which is orthogonal to the span $F_{r,k}$ of eigenspaces of $P^*_kP_k$ with eigenvalue $\nu \leq \nu_{r,k}$ and so, in particular, to $A$. Combined, they show that there is a constant $C$ such that for any $\nu_{r+1,k}$-eigenvector $A$,
\[
\| \D H(A) \|^2_{L^2(\omega)}
\geq
\lambda_{r+1} \left(
\| H(A) \|^2_{L^2(\omega)} - Ck^{-n-1} \tr(A^2)
\right).
\]
Applying Proposition \ref{I2_holds} we see that 
\[
16\pi^2k^n\|\D H(A)\|^2_{L^2(\omega)}
-
\lambda_{r+1} \tr(A^2)
\geq
Ck^{-1} \tr(A^2).
\]
\end{proof}

\begin{lemma}
\label{L22-L2-equivalent}
There is a constant $C$ such that if $A$ is a $\nu_{j,k}$-eigenvector of $P^*_kP_k$ with $j \leq s$, then
\[
\| H(A)\|_{L^2_2(\omega)} \leq C \| H(A) \|_{L^2(\omega)}
\]
\end{lemma}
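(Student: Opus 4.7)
\emph{Proof sketch.} The plan is to combine an eigenvalue-based bound on $\|\D H(A)\|_{L^2}$ with the overdetermined-elliptic estimate for $\D$, using Proposition \ref{I2_holds} which has just been established to compare $\tr(A^2)$ with $\|H(A)\|^2_{L^2}$.

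First, since $A$ is a $\nu_{j,k}$-eigenvector with $j \leq s$, Proposition \ref{upper-bound-result} (together with the $r^\text{th}$ inductive hypothesis I1 for $j \leq r$) yields $\nu_{j,k} = O(k^{-2})$, so $\|P_k A\|^2 = \nu_{j,k}\tr(A^2) \leq Ck^{-2}\tr(A^2)$. Feeding this into Proposition \ref{P*P-controls-D} gives
$$\|\D H(A)\|^2_{L^2(\omega)} \leq Ck^{-n}\tr(A^2) + Ck^{-2}\|H(A)\|^2_{L^2(\omega)}.$$
Next, I would invoke Proposition \ref{I2_holds} with $B=A$, which applies since $A\in F_{s,k}$, to obtain for all sufficiently large $k$ the bound $\tr(A^2) \leq 32\pi^2 k^n \|H(A)\|^2_{L^2(\omega)}$. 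Substituting this into the previous display reduces the first term to a multiple of $\|H(A)\|^2_{L^2(\omega)}$, yielding $\|\D H(A)\|_{L^2(\omega)} \leq C\|H(A)\|_{L^2(\omega)}$.

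Finally, since the symbol of $\D$ is injective (as recalled in the proof of Lemma \ref{L2k(Dk)-controls-L2(D)}), the operator $\D$ satisfies the overdetermined-elliptic estimate
$$\|f\|_{L^2_2(\omega)} \leq C\bigl(\|f\|_{L^2(\omega)} + \|\D f\|_{L^2(\omega)}\bigr).$$
Applying this to $f = H(A)$ and combining with the bound just obtained gives the claim. The main step is really just observing that Proposition \ref{P*P-controls-D}, combined with the eigenvalue bound and the freshly proved I2, already produces enough control on $\D H(A)$ that the elliptic estimate closes the gap from $L^2$ to $L^2_2$; no genuinely new analytic input is required, and the main obstacle---reconciling the two competing $L^2$-structures---was already handled by Proposition \ref{I2_holds}.
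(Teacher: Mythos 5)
Your argument is correct and takes essentially the same route as the paper: the paper bounds $\|\D H(A)\|_{L^2(\omega)}\leq C\|H(A)\|_{L^2(\omega)}$ by combining Lemma \ref{L2-norm_DH} with Proposition \ref{I2_holds} and then applies the overdetermined-elliptic estimate for $\D$, while you simply inline the proof of Lemma \ref{L2-norm_DH} (Proposition \ref{P*P-controls-D} together with the eigenvalue bound $\nu_{j,k}=O(k^{-2})$ from Proposition \ref{upper-bound-result} and I1). As a minor bonus, your use of I1 for $j\leq r$ handles directly the case the paper only addresses in its parenthetical remark about invoking the analogues of Lemma \ref{L2-norm_DH} for smaller $j$.
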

\begin{proof}
We first recall that since $\D$ is overdetermined elliptic (i.e., its symbol is injective) there is a constant $C$ such that for any $f \in L^2_2$,
\[
\| f\|_{L^2_2(\omega)} \leq C \left (\| f\|_{L^2(\omega)} + \| \D f\|_{L^2(\omega)} \right).
\] 
By Proposition \ref{I2_holds}, if we pick some $c > 16\pi^2$ then for all $k$ sufficiently large, 
\[
\tr(A^2) 
\leq 
c k^n \| H(A)\|^2_{L^2(\omega)}.
\]
Now from this and Lemma \ref{L2-norm_DH}, we have
\[
\| \D H(A)\|^2_{L^2(\omega)}
\leq
C \| H(A)\|^2_{L^2(\omega)}.
\]
(Strictly speaking Lemma \ref{L2-norm_DH} applies for $r+1 \leq j \leq s$, we must also invoke the identical versions for smaller values of $j$.) This and the elliptic estimate prove the result. 
\end{proof}

Now given $A \in F_{r+1,s,k}$ we write
\[
H(A) = H(A)_< + H(A)_{r+1} + H(A)_>
\]
where $H(A)_<$ is the component of $H(A)$ lying in the span $E_r$ of all $\D^*\D$-eigen\-spaces with eigenvalue strictly less than $\lambda_{r+1}$, $H(A)_>$ lies in the span of eigen\-spaces with eigenvalue strictly greater than $\lambda_{r+1}$ and $H(A)_{r+1}$ is the component of $H(A)$ in the $\lambda_{r+1}$-eigenspace $V_{r+1}$. We next show that $H(A)_{r+1}$ is the dominant part in this decomposition.

\begin{lemma}\label{HA-r+1_bound}
There is a constant $C$ such that for all $A \in F_{r+1,s, k}$
\[
\left| 16\pi^2k^n\|H(A)_{r+1}\|^2_{L^2}  - \tr(A^2)\right|  
\leq Ck^{-1}\tr(A^2).
\]
\end{lemma}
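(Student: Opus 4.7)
The plan is to use the $L^2$-orthogonal decomposition $H(A) = H(A)_< + H(A)_{r+1} + H(A)_>$ together with Proposition \ref{I2_holds} (with $B=A$), which already gives
\[
\left| 16\pi^2 k^n \|H(A)\|^2_{L^2} - \tr(A^2)\right| \leq Ck^{-1}\tr(A^2).
\]
Since $\|H(A)\|^2 = \|H(A)_<\|^2 + \|H(A)_{r+1}\|^2 + \|H(A)_>\|^2$, it suffices to show that both $\|H(A)_<\|^2$ and $\|H(A)_>\|^2$ are $O(k^{-n-1})\tr(A^2)$.

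For the low-frequency component, the idea is to apply Proposition \ref{nu-espaces-converging}. Under the inductive hypotheses I2 and I3 the hypotheses of that proposition are satisfied by $M_k = A_{j,k}$ and $\psi = \phi_j$ for each $j = 0,\ldots,r$ (where $\{\phi_j\}$ is an $L^2$-orthonormal basis of $E_r$ with $\phi_j$ a $\lambda_j$-eigenvector, and $A_{j,k}$ are the corresponding eigenvectors furnished by I3). Now, under I1 we have $\nu_{j,k} < \nu_{r+1,k}$ for large $k$ when $j\leq r$, so $F_{r+1,s,k}$ is $P^*_kP_k$-orthogonal to each $A_{j,k}$. Thus $\tr(A \cdot A_{j,k}) = 0$ for every $A \in F_{r+1,s,k}$, and Proposition \ref{nu-espaces-converging} yields $|\langle H(A), \phi_j\rangle_{L^2}|^2 \leq Ck^{-n-1}\tr(A^2)$. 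Summing over the finite index set $j = 0,\ldots,r$ gives $\|H(A)_<\|^2_{L^2} = O(k^{-n-1})\tr(A^2)$.

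For the high-frequency component, the idea is to leverage the spectral gap $\lambda_{s+1} > \lambda_{r+1}$. Because $\D$ sends distinct eigenspaces of $\D^*\D$ to orthogonal subspaces of $\Omega^{0,1}(TX)$, the decomposition is compatible with $\D$:
\[
\|\D H(A)\|^2_{L^2} = \|\D H(A)_<\|^2_{L^2} + \lambda_{r+1}\|H(A)_{r+1}\|^2_{L^2} + \|\D H(A)_>\|^2_{L^2},
\]
with $\|\D H(A)_>\|^2_{L^2} \geq \lambda_{s+1}\|H(A)_>\|^2_{L^2}$ and $\|\D H(A)_<\|^2_{L^2}\leq \lambda_r \|H(A)_<\|^2_{L^2} = O(k^{-n-1})\tr(A^2)$. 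Lemma \ref{L2-norm_DH} gives $16\pi^2 k^n \|\D H(A)\|^2_{L^2} = \lambda_{r+1}\tr(A^2) + O(k^{-1})\tr(A^2)$, while Proposition \ref{I2_holds} combined with the previous paragraph yields $\|H(A)_{r+1}\|^2_{L^2} + \|H(A)_>\|^2_{L^2} = \frac{1}{16\pi^2 k^n}\tr(A^2) + O(k^{-n-1})\tr(A^2)$. Multiplying the latter by $\lambda_{r+1}$ and subtracting from the $\|\D H(A)\|^2$ identity eliminates the $H(A)_{r+1}$ term, giving
\[
\|\D H(A)_>\|^2_{L^2} - \lambda_{r+1}\|H(A)_>\|^2_{L^2} = O(k^{-n-1})\tr(A^2).
\]
Combined with $\|\D H(A)_>\|^2_{L^2} \geq \lambda_{s+1}\|H(A)_>\|^2_{L^2}$ and $\lambda_{s+1} - \lambda_{r+1} > 0$, we conclude $\|H(A)_>\|^2_{L^2} = O(k^{-n-1})\tr(A^2)$.

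Plugging both tail estimates back into the orthogonal decomposition of $\|H(A)\|^2_{L^2}$ and invoking Proposition \ref{I2_holds} closes the argument. The main obstacle is the bound on $\|H(A)_<\|^2_{L^2}$: it is the only place where the inductive hypotheses I2 and I3 genuinely enter (via Proposition \ref{nu-espaces-converging}), and it is where the asymptotic orthogonality between the two Hessians, transported across $\Hilb_k$ and $\FS_k$, must be used. The high-frequency bound, by contrast, is a formal consequence of the spectral gap and the two a priori identities already in hand.
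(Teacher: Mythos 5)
Your argument is correct and follows essentially the same route as the paper: the same three-way spectral decomposition of $H(A)$, Proposition \ref{I2_holds} and Lemma \ref{L2-norm_DH} for the two a priori identities, Proposition \ref{nu-espaces-converging} applied to the inductively supplied eigenvectors $A_{j,k}$ to kill $H(A)_<$, and the subtraction/spectral-gap trick to kill $H(A)_>$. The only differences are cosmetic: the paper normalises $\tr(A^2)=16\pi^2k^n$ whereas you carry $\tr(A^2)$ through explicitly, and your $O(k^{-n-1})\tr(A^2)$ tail bounds coincide with the paper's $O(k^{-1})$ under that normalisation.
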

\begin{proof}
By rescaling it suffices to prove the result for $\tr(A^2) = 16\pi^2k^n$, which will simplify the notation. We know from Proposition \ref{I2_holds} and Lemma \ref{L2-norm_DH} that for such~$A$,
\begin{eqnarray*}
\|H(A)_<\|^2_{L^2(\omega)}
+
\|H(A)_{r+1}\|^2_{L^2(\omega)} 
+
\|H(A)_>\|^2_{L^2(\omega)}
	&=&
		1 + O(k^{-1}),\\
\|\D H(A)_<\|^2_{L^2(\omega)}
+
\lambda_{r+1}\|H(A)_{r+1}\|^2_{L^2(\omega)} 
+
\|\D H(A)_>\|^2_{L^2(\omega)}
	&=&
		\lambda_{r+1} +O(k^{-1}).
\end{eqnarray*}
Now let $\phi_0, \ldots, \phi_r$ be an orthonormal basis for $E_r$ with $\phi_j$ a $\lambda_j$-eigenvector of $\D^*\D$. By our inductive hypotheses, there are $P_k^*P_k$-eigenvectors $A_{j,k}$ all with eigenvalues at most $\nu_{r,k}$, with $\tr(A_{j,k}) = 16\pi^2k^{n} + O(k^{n-1})$ and with $H(A_{j,k}) = \phi_j +O(k^{-1/2})$ in $L^2_2$. This means we can apply  Proposition \ref{nu-espaces-converging} to $A$, which is orthogonal to each $A_{j,k}$, to obtain
\[
\left|\langle H(A), \phi_j \rangle_{L^2(\omega)}\right|^2
\ \leq \
C k^{-n-1} \tr(A^2)
\ \leq \
C k^{-1}.
\]
It follows that $\| H(A)_{<}\|^2_{L^2(\omega)} \leq Ck^{-1}$ and so also
\[
\| \D H(A)_<\|^2_{L^2(\omega)}
\  \leq \
\lambda_r \| H(A)_{<}\|^2_{L^2(\omega)}
\ \leq\ Ck^{-1}.
\]
Meanwhile, $\| \D H(A)_{>} \|^2_{L^2(\omega)} \geq \lambda_{s+1}\|H(A)_>\|^2_{L^2(\omega)}$. 

These bounds now give us
\begin{eqnarray*}
\| H(A)_{r+1}\|^2_{L^2(\omega)} + \|H(A)_>\|^2_{L^2(\omega)}
	&=&
		1 + O(k^{-1}),\\
\lambda_{r+1}\|H(A)_{r+1}\|^2_{L^2(\omega)} + \lambda_{s+1}\|H(A)_>\|^2_{L^2(\omega)}
	&\leq&
		\lambda_{r+1} + O(k^{-1})
\end{eqnarray*}
Subtract $\lambda_{r+1}$-times the first estimate from the second. Since $\lambda_{s+1} > \lambda_{r+1}$ we see that $\|H(A)_>\|^2_{L^2(\omega)} = O(k^{-1})$ and so $\|H(A)_{r+1}\|^2_{L^2(\omega)} = 1 + O(k^{-1})$, which is what the Lemma asserts for $\tr(A^2) = 16\pi^2k^n$. 		
\end{proof}

We can now complete the proof of Proposition \ref{I3_holds} and thus those of Theorems \ref{eigenvalue-theorem} and \ref{eigenvector-theorem}. 

\begin{proof}[Proof of Proposition \ref{I3_holds}] Let $\phi \in V_{r+1}$ be of unit length in $L^2$. Denote by $A_{\phi,k} \in F_{r+1,s,k}$ the point for which $H(A)$ is nearest to $\phi$ in $L^2$. We first note that the map $A \mapsto H(A)_{r+1}$ is a linear isomorphism $F_{r+1,s,k} \to V_r$. This is because, by definition, $\dim F_{r+1,s,k} \geq s-r = \dim V_r$; meanwhile by Lemma~\ref{HA-r+1_bound}, if $H(A)_{r+1} = 0$, then $A=0$, so the map is injective. 

It follows that $A_{\phi,k}$ is the unique element of $F_{r+1,s,k}$ for which $H(A)_{r+1} = \phi$:
\[
H(A_{\phi,k}) = H(A_{\phi,k})_{<} + \phi + H(A_{\phi,k})_{>}
\]
Moreover, since $\|\phi\|_{L^2} = 1$, Lemma \ref{HA-r+1_bound} tells us that $\tr(A^2_{\phi,k}) = 16\pi^2k^n + O(k^{n-1})$. Now Proposition \ref{I2_holds} gives that $\|H(A_{\phi,k})\|^2_{L^2} = 1 + O(k^{-1})$ from which we deduce that 
\[
\| H(A_{\phi,k}) - \phi \|^2_{L^2}
=
\|H(A_{\phi,k})_<\|^2_{L^2} + \|H(A_{\phi,k})_>\|^2_{L^2}
=
O(k^{-1})
\]
and the convergence is seen to hold in $L^2$. 

To extend this to $L^2_2$ we note that by Lemma \ref{L22-L2-equivalent}, the $L^2_2$-norm and $L^2$-norms are uniformly equivalent for the functions $H(A_{\phi,k})$ under consideration. This implies that $H(A_{\phi,k})$ is Cauchy and hence convergent in $L^2_2$. It remains to check the rate of convergence in $L^2_2$. To show this, note that by Lemma \ref{L2-norm_DH} and the fact that 
 $\tr(A^2_{\phi,k}) = 16\pi^2k^n + O(k^{n-1})$ we have
$
\| \D H(A_{\phi,k}) \|^2_{L^2} = \lambda_{r+1} + O(k^{-1})
$
and hence $\|\D H(A_{\phi,k})_<\|^2_{L^2} + \| \D H(A_{\phi,k})_>\|^2 = O(k^{-1})$; in other words, $\|\D (H(A_{\phi,k}) - \phi)\|_{L^2}^2 = O(k^{-1})$. The proof that $\|H(A_{\phi,k}) - \phi\|^2_{L^2_2} = O(k^{-1})$ now proceeds via the elliptic estimate for $\D$ just as in the proof of Lemma \ref{L22-L2-equivalent}.

Finally, we observe that the estimate here is uniform in $\phi$ since at no point did we make use of anything other than $\|\phi\|_{L^2} = 1$ and $\phi \in V_{r+1}$.
\end{proof}

\section{The Hessians for balanced embeddings}
\label{Hessians_of_balanced} 

In this final section we prove that the convergence results of Theorems \ref{asymptotics-hessians}, \ref{eigenvalue-theorem} and \ref{eigenvector-theorem} apply not just to a sequence of the form $\Hilb_k(h)$ for some fixed $h$, but also to a sequence of balanced embeddings which converge to a constant scalar curvature Kähler metric, as in Donaldson's Theorem \ref{skd_balanced_converges}. This was stated as Theorem~\ref{convergence_for_balanced} in \S\ref{applications}. We warn the reader that the notation in this section---in particular the meaning of a $k$-subscript---is a little different to that used in the body of the article. This is mentioned explicitly in what follows.
 
\begin{proof}[Proof of Theorem \ref{convergence_for_balanced}]
Recall that we assume $\Aut(X,L)/\C^*$ is discrete, that $\omega_{\mathrm{csc}} \in c_1(L)$ has constant scalar curvature, that $b_k \in \B_k$ is balanced and that the curvatures $\omega_k \in c_1(L)$ of $h_k = \FS(b_k)$ converge in $C^\infty$ to $\omega_{\mathrm{csc}}$, as in Theorem \ref{skd_balanced_converges}. In fact, the proof of Theorem 1 gives $\omega_k = \omega_{csc} + O(k^{-1})$ in $C^r$ for any $r$ (where the $C^r$-norm is defined via $\omega_{csc}$). Now, for each large integer $l$, we have a sequence $b_{k,l} = \Hilb_k(h_l)$ of projective metrics. The key fact we will use is that, since $b_k$ is balanced, it is a fixed point of $\Hilb_k\circ \FS_k$ (see \cite{donaldson-1}). In other words, the diagonal sequence $b_{k,k}= b_k$ is the original sequence of balanced embeddings.

We begin with the proof of part 1, that for $f, g \in C^\infty(X)$, 
\[
\tr\left(Q_{f,k}P^*_kP_k\left(Q_{g,k}\right)\right)
=
\frac{k^n}{4\pi} \int_X f \D^*\D g\, \frac{\omega^n_{\mathrm{csc}}}{n!}
+
O(k^{n-1})
\]
where \emph{the subscript $k$ means the object is defined with respect to the balanced embedding $b_k$}.  To see this we apply Theorem \ref{asymptotics-hessians} to each term in the sequence $h_l$. Write $Q_{f,k,l}$ for the matrix defined by  equation (\ref{Q-matrix}) where the basis $s_\alpha$ of $H^0(X,L^k)$ is orthonormal for the metric $b_{k,l}$. By the balanced property mentioned above, $Q_{f,k,k} = Q_{f,k}$ is the matrix appearing in the statement of part~1. Similarly, we write $P_{k,l}$ for the operator defined by the embedding $b_{k,l}$ and again $P_{k,k} = P_k$ is defined by the balanced embedding $b_k$. Finally, we write $\D^*_l\D_l$ for the operator defined by the Kähler metric $\omega_l$.

Now Theorem \ref{asymptotics-hessians} gives, for each large integer $l$, a constant $C$ such that
\[
\left|
\tr\left(Q_{f,k,l} P^*_{k,l}P_{k,l}(Q_{g,k,l})\right)
-
\frac{k^n}{4\pi}\int_X f \D_l^*\D_l g\, \frac{\omega^n_{\mathrm{l}}}{n!}
\right|
\leq 
Ck^{n-1}.
\]
Since $\omega_l \to \omega_{\mathrm{csc}}$ in $C^\infty$, the $\omega_l$ are uniformly equivalent and form a family which is compact for the $C^\infty$-topology. It follows that $C$ in the above estimate can be chosen independent of $l$. Setting $l = k$ we obtain
\[
\left|
\tr\left(Q_{f,k} P^*_{k}P_{k}(Q_{g,k})\right)
-
\frac{k^n}{4\pi}\int_X f \D_k^*\D_k g\, \frac{\omega^n_{\mathrm{k}}}{n!}
\right|
\leq 
Ck^{n-1}.
\]
Since $\omega_k = \omega_{\mathrm{csc}} + O(k^{-1})$ in $C^r$ for all $r$, we have that
\[
\int_X f \D^*_k\D_k g\, \frac{\omega_k^n}{n!} 
= 
\int_X f \D^* \D g\, \frac{\omega_{\mathrm{csc}}^n}{n!}
+
O(k^{-1})
\]
and part 1 now follows.

The proofs of parts 2 and 3 follow exactly the same argument. In part 2, for example, along with the uniformity of Theorem \ref{eigenvalue-theorem} we use the fact that the eigenvalues of $\D^*\D$ depend continuously on the metric. We should just point out that to obtain a more precise statement here about the rate of convergence of $64\pi^3 k^2 \nu_{k,j}$ to $\lambda_j$, one would need a quantitative measure of this continuity. 

Finally we give the proof of part 4 which is again similar, but the wording needs a little extra care. Recall that $\lambda_{p-1} < \lambda_p = \cdots = \lambda_q < \lambda_{q+1}$, that $\phi$ is a $\lambda_p$-eigenvector of $\D^*\D$ and that $A_k\in F_{p,q,k}$ is the $\nu_{j,k}$-eigenvector of $P^*_kP_k$ with $p \leq j \leq q$ and with $H(A)$ nearest to $\phi$ in $L^2$. We must show that $H(A_k)$ converges to $\phi$ in $L^2_2$. 

To begin, we prove convergence in $L^2$. First note that since $\omega_k \to \omega_{\mathrm{csc}}$ in $C^\infty$, there is a sequence $\phi_k$ converging to $\phi$ in $L^2$ and such that $\phi_k$ is a $\D^*_k\D_k$-eigenvector with eigenvalue $\lambda_j(\omega_k)$ where $p \leq j \leq q$. Now the uniformity of part~2 of Theorem~\ref{eigenvector-theorem} together with a diagonal argument as above gives the existence of a sequence $A'_k \in i \u(n_k+1)$ of $\nu_{i,k}$-eigenvectors with $p \leq i \leq q$ and a constant $C$ such that 
\[
\left\|H(A'_k) - \phi_k\right\|^2_{L^2_2(\omega_k)} 
< Ck^{-1}
\] 
$A'_k$ is defined as follows: suppose that $\lambda_{p'-1}(\omega_k) < \lambda_{p'}(\omega_k) = \cdots = \lambda_{q'}(\omega_k) < \lambda_{q'+1}(\omega_k)$ and that $p' \leq i \leq q'$. Then $A'_k$ is the point of $F_{p',q',k}$ for which $H(A'_k)$ is closest to $\phi_k$ in $L^2(\omega_k)$. In particular, $A'_k$ is \emph{not} the same as $A_k$.

Since $\omega_k \to \omega_{\mathrm{csc}}$ in $C^\infty$ we can replace $\omega_k$ by $\omega_{\mathrm{csc}}$ in the $L^2_2$-norm here. It follows that $H(A'_k)$ converges to $\phi$ in $L^2(\omega_{\mathrm{csc}})$. Now since $H(A_k)$ is, by definition of $A_k$, closer in $L^2(\omega_{\mathrm{csc}})$ to $\phi$ than $H(A'_k)$ we see that $H(A_k)$ also converges in $L^2$ to $\phi$. 

To complete the proof we must extend the convergence to $L^2_2$. This follows essentially from Lemma \ref{L22-L2-equivalent}, which says that the $L^2_2$-norm and $L^2$-norm are equivalent for the functions $H(A_k)$. To run this argument, we apply the Lemma to each of the metrics $\omega_l$, use uniformity of the constant with respect to $l$ (cf.\ the remark of \S\ref{uniformity_metric}) and then apply a diagonal argument. This gives a constant $C$ such that for all $A \in F_{p,q,k}$,
\[
\| H(A) \|_{L^2_2(\omega_k)} \leq C \| H(A) \|_{L^2(\omega_k)}.
\]
We now replace the $\omega_k$-norms by the uniformly equivalent $\omega_{\mathrm{csc}}$-norms to finish the argument.
\end{proof}

\bibliographystyle{alpha}
\bibliography{quant_hess_mabuchi_v6}

\end{document}